\newcommand{\Q}{\mathbb{Q}}
\newcommand{\R}{\mathbb{R}}
\newcommand{\Z}{\mathbb{Z}}
\newcommand{\calC}{\mathcal{C}}
\newcommand{\cF}{\mathcal{F}}
\DeclareMathOperator{\divv}{div}
\DeclareMathOperator{\Div}{Div}
\DeclareMathOperator{\Gal}{Gal}
\DeclareMathOperator{\Divv}{Div}
\DeclareMathOperator{\Diag}{Diag}
\DeclareMathOperator{\Bl}{Bl}
\DeclareMathOperator{\Circ}{Circ}
\DeclareMathOperator{\Circa}{Circalt}
\newcommand{\tors}{{\operatorname{tors}}}
\newcommand{\SL}{\operatorname{SL}}
\newcommand{\isom}{\simeq}
\newcommand{\floor}[1]{\left\lfloor #1 \right\rfloor}
\numberwithin{equation}{section}
\newtheorem{thm}{Theorem}
\newtheorem{lem}[thm]{Lemma}
\newtheorem{prop}[thm]{Proposition}
\newtheorem{cor}[thm]{Corollary}
\theoremstyle{definition}
\newtheorem*{Ack}{Acknowledgements}
\theoremstyle{remark}
\theoremstyle{definition}
\newtheorem{definition}[equation]{Definition}
\theoremstyle{remark}
\newtheorem{remark}[equation]{Remark}
\newenvironment{psmallmatrix}
  {\left(\begin{smallmatrix}}
  {\end{smallmatrix}\right)}
\definecolor{darkgreen}{rgb}{0,0.5,0}
\DeclareRobustCommand{\SkipTocEntry}[5]{}
\begin{document}
\title{Modular units on  \( X_{1}( p) \) and quotients of the cuspidal group}

\begin{abstract}
Modular units are functions on modular curves whose divisors are supported on the cusps. They form a free abelian group of rank at most one less than the number of cusps. In this paper we study the group of modular units on $X_{1}( p )$, with prime level $p \ge 5$. We give an explicit basis for this group and study certain rational subgroups of it. We use the basis to numerically investigate the structure of the cuspidal group of $X_{1}( p)$ and its rational subgroup. In the later stages of this paper we use our basis to determine a specific large quotient of the cuspidal group.
\end{abstract}

\author{Elvira Lupoian}

\address{Department of Mathematics, University College London, London, WC1H 0AY, UK}

\email{e.lupoian@ucl.ac.uk}
\date{\today}
\thanks{The author is supported by the EPSRC grant EP/W524335/1}
\keywords{Modular Units, Cuspidal Groups, Modular Curves}
\subjclass[2020]{11G16, 11G18}
 \maketitle
\section{Introduction}
Let $\Gamma < \SL_{2}(\Z)$ be a congruence subgroup and $X_{\Gamma}$ the complete modular curve associated to  $\Gamma$. The subgroup of the Jacobian $J_{\Gamma}$ of $X_{\Gamma}$ generated by linear equivalence classes of differences of cusps is denote by $\mathcal{C}_{\Gamma}$ and it is known as \textit{cuspidal group} of $X_{\Gamma}$. This group is finite by the classical theorems of Manin \cite{Manin} and Drinfeld \cite{Drinfeld}, and its order $h_{\Gamma}$ is often called the \textit{cuspidal class number}. The proof of Manin-Drinfeld is ineffective and gives no method for computing $h_{\Gamma}$. When $\Gamma = \Gamma_{0}(p)$, with $p \ge 5$ prime, $X_{\Gamma} = X_{0}(p)$ has two cusps $c_0$, $c_\infty$, both rational points on the curve, and Ogg \cite{Ogg} showed that their difference defines an element of order $\frac{p-1}{\gcd(12, p-1)}$ on the Jacobian. Mazur \cite{mazur1977modular} showed that this element generates the entire rational torsion subgroup of the Jacobian $J_{0}(p)$. 

In general, computing $\mathcal{C}_{\Gamma}$ or $h_{\Gamma}$ remains a difficult problem. The cuspidal class number or certain factors of it has been determined in some cases. Kubert and Lang \cite{KL} computed $h_{\Gamma(N)}$ when $N$ is coprime to $6$.  In a series of papers Takagi \cite{takagi1997cuspidal}, \cite{takagi1992cuspidal} \cite{takagi1993cuspidal}, \cite{takagi1995cuspidal} determined $h_{\Gamma_{1}(N)}$ and $h_{\Gamma_{0}(N)}$ for various levels $N$. Yu \cite{yu1980cuspidal} completely determined the order of the subgroup of the cuspidal subgroup of $X_{1}(N)$ generated by $\infty$-cusps. The structure of the cuspidal group is less studied. Most result treat the subgroup generated by classes of rational divisors, and almost all results concern the modular curve $X_{0}(N)$, see \cite{chua1997rational}, \cite{lorenzini1995torsion}, \cite{ling1997q}, \cite{rouse2015spaces}, \cite{takagi1997cuspidal}. The most complete results is that of Yoo \cite{yoo2023rational}, which completely determines the structure of this rational subgroup of the cuspidal group. The structure of the $p$-primary of a specific subgroup of the cuspidal group of $X_{1}(p^{n})$ for $n\ge 2$ was studied by Yang and Yu \cite{yang2010structure}. There are  various results comparing the cuspidal group to the rational torsion subgroup of the Jacobian, see \cite{lorenzini1995torsion}, \cite{ohta2013eisenstein}, \cite{masami2014eisenstein}, \cite{yoo2023rational2}.

In this paper we focus on $\Gamma = \Gamma_{1}(p)$, with $p \ge 5$ prime. In this setting, $h_{1}(p) = h_{\Gamma_{1}(p)}$  is completely determined due to Takagi \cite{takagi1992cuspidal} and Yu \cite{yu1980cuspidal}. Moreover, Ohta \cite{ohta2013eisenstein} proved that the rational torsion subgroup of the Jacobian is generated by cusps up to $2-$torsion. However, the structure of the cuspidal group remains a mystery. Computationally this is a challenging problem also. The genus of $X_{1}(p)$ grows extremely fast and thus computing a model of the curve and the cuspidal group explicitly is impractical.  An added difficulty is the fact the number of cusps grows with the level. Motivated by this, we want to find a practical and efficient way of computing the cuspidal group of  $X_{1}(p)$. 

One way of achieving this is by computing a basis for the group of \textit{modular units} for $\Gamma_{1}(p)$, that is, the functions on $X_{1}(p)$ whose divisors are entirely supported on cusps. Given such a basis, the problem of explicitly computing the cuspidal group is reduced to a simple linear algebra problem, since then one would simply need to compute the Smith normal form of the matrix representing the divisors of the basis.

The main result of this paper constructs an explicit basis for the group of modular units. Our strategy is to use Siegel functions to construct modular units on the curve and then use the known order of the cuspidal group to verify that a certain set of modular units is a basis. This strategy was also used by Yang in \cite{yang2007modular} to study the subgroup of modular units whose divisors are supported on the cusps lying over $c_{\infty} \in X_{0}(p)$ only. We reconstruct Yang's basis using our functions.

Modular units have been studied for other families of modular curves. Kubert and Land \cite{KL} used Siegel functions to construct modular units on $X(N)$. The modular units on $X_{1}(N)$ considered in the previously mentioned works of Yu and Takagi are also products of Siegel functions. For $X_{0}(N)$, modular units are typically products of eta functions, see for instance \cite{Ogg}, \cite{newman1957construction}, \cite{newman1959construction}, \cite{takagi1997cuspidal}.

More generally, modular units have wider arithmetic applications in the study of modular curves. For instance, they are used in the work of Bilu and Parent \cite{bilu2011serre}, \cite{bilu2011runge} to study integral points on modular curves. They are used for gonality computations in \cite{derickx2014gonality}. Ogg's modular unit is used by Gross \cite{gross1984heegner} to prove that certain Heegner points are non-torsion.  

\subsection{Set up} 
In this paper we study the group of modular units on $X_{1}(p)$, where $p \ge 5$ is prime.
\begin{definition}
    A \textit{modular unit} for a congruence subgroup $\Gamma \subseteq \SL_{2}( \Z)$ is a meromorphic function on the upper half plane $\mathbb{H}$, with no zeros or poles in $\mathbb{H}$ and which is invariant under the action of $\Gamma$. 
\end{definition}
It follows that any modular unit for $\Gamma$ is a function on the associated complete modular curve $X_{\Gamma}$, whose divisor is supported on cusps only. It is for this reasons that we refer to a modular unit for $\Gamma$ as a \textit{modular unit on} $X_{\Gamma}$, and vice versa. Throughout this paper we use these two notions interchangeably. 

The modular units that we construct are products of \textit{Siegel functions}, see Section \ref{siegelfuns} for details. Associated to any $\mathbf{a} \in \Q^{2} \setminus \Z^{2}$ there is a meromorphic function $g_{\mathbf{a}} : \mathbb{H} \longrightarrow \mathbb{C}$ which has no zeros or poles in $\mathbb{H}$. The work of Kubert and Lang (see Theorem \ref{KLthm}) completely describes the modular units of $\Gamma(p)$ in terms of Siegel functions. Our aim is to understand when such functions are invariant under the action of $\Gamma_{1}(p) \supseteq \Gamma(p) $. This leads us to the following natural construction, as detailed in Section \ref{constructions}.

For the rest of this paper we fix a prime number $p \ge5$ and an integer $\alpha $ whose reduction modulo $p$ has order $p-1$. All constructions in this paper depend on this choice of $\alpha$.
Let $n = \frac{p-1}{2}$ and for $0 \le i \le n -1$ define:
\begin{center}
 $E_{i} ( \tau ) = g_{\left( \frac{\alpha^{i}}{p}, 0 \right)} ( \tau ) \displaystyle \prod_{j=0}^{p-2} g_{\left( \frac{\alpha^{i}}{p}, \frac{\alpha^{j}}{p} \right)}( \tau ) \ \ \ \ \text{and} \ \ \ \ F_{i} ( \tau ) = g_{\left( 0, \frac{\alpha^{i}}{p} \right)} ( \tau ).$
 \end{center}

In Section \ref{constructions} we give necessary and sufficient conditions for products of such functions to be modular units on $X_{1}(p)$ and we study the divisors of such functions. Crucially, we are also able to use the fixed $\alpha$ to parametrise the cusps of $X_{1}(p)$. More precisely, the cusps of $X_{1}(p)$ fall into a natural partition:
\begin{itemize}
    \item the cusps $P_{i}$ represented by $\frac{\alpha^{i}}{p}$ with $ 0 \le i \le n-1$;
    \item  the cusps $Q_{j}$ represented by $\frac{p}{\alpha^{j}}$ with $0 \le j \le n-1$.
\end{itemize}
This choice of representatives is well suited to computing the divisor of any modular unit on $X_{1}(p)$ constructed as a product of the above functions, see Proposition \ref{ordsvansX1}.

As detailed in Section \ref{modularcurves}, we work with the canonical model of $X_{1}(p)$ over $\Q$ in which the cusps $P_{0}, \ldots, P_{n-1}$ are defined over $\Q$ and $Q_{0}, \ldots, Q_{n-1}$ are defined over $\Q(\zeta_{p})^{+}$, the maximal real subfield of the cyclotomic field $\Q( \zeta_{p})$, and form one orbit under the induced Galois action.

\subsection{Main Results}

Let $\beta \in \{ \pm 1, \pm 5 \}$ with $p \equiv  \beta \mod{12} $. For $0 \le i \le n-2$ define
\begin{align}
     G_{i} ( \tau  ) = E_{i} ( \tau  ) E_{n-1} ( \tau  )^{-\alpha^{2i+2}}F_{n-1} ( \tau  )^{p (\alpha^{2i+2} -1)} \ \  \  \text{and} \ \ \ H_{i} ( \tau  ) = F_{i}( \tau  ) F_{n-1}( \tau  )^{ -\alpha^{2i+2} + p \beta (\alpha^{2i+2} -1)},
\end{align}
and further define 
\begin{align}
G_{n-1} ( \tau  ) = E_{n-1} ( \tau  )^{p} F_{n-1} ( \tau  )^{-\beta p} \ \ \text{and} \ \ 
H_ {n-1} ( \tau  ) = F_{n-1}( \tau  )^{12p}.
\end{align}
These are all modular units for $\Gamma_{1}(p)$ by the criteria proved in Section \ref{constructions}. In Section \ref{basissec} we use the known cuspidal class number of $X_{1}(p)$ (see Theorem \ref{cuspidalclassnumberX1p}) to prove the following. 
\begin{thm} \label{basisx1p2}
The $p-2$ functions $G_{0}, \ldots, G_{n-1}, H_{1}, \ldots, H_{n-1}$ form a basis for the group of modular units on $X_{1}(p)$ modulo constants.
\end{thm}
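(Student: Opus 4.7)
The plan is to leverage the Manin--Drinfeld theorem: the divisor map embeds the group $\mathcal{M}$ of modular units on $X_{1}(p)$ modulo constants as a sublattice of $\Div^{0}_{\cusps}(X_{1}(p)) \cong \Z^{2n-1}$ of finite index equal to the cuspidal class number $h_{1}(p)$. In particular $\mathcal{M}$ is free abelian of rank $p-2$, which is exactly the number of proposed basis elements. Since each $G_{i}$ and $H_{j}$ is already known to be a modular unit by the criteria established in Section \ref{constructions}, it suffices to show that the lattice spanned by $\{\divv(G_{i}), \divv(H_{j})\}$ has index $h_{1}(p)$ in $\Div^{0}_{\cusps}(X_{1}(p))$, equivalently that the associated divisor matrix $M$ has $|\det M| = h_{1}(p)$.

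The first step is to use Proposition \ref{ordsvansX1} to write down the divisors of each $G_{i}$ and $H_{j}$ explicitly in the cusp basis $\{P_{0}, \ldots, P_{n-1}, Q_{0}, \ldots, Q_{n-1}\}$. The normalising exponents in the definitions of $G_{i}$ and $H_{j}$ (the $-\alpha^{2i+2}$, the factors involving $\beta$, and the exponents $p$ and $12p$ on $F_{n-1}$) are tuned precisely so that each divisor has degree zero, and so that $\divv(H_{j})$ is supported on only one of the two cusp orbits (say the $Q$-orbit), while $\divv(G_{i})$ is supported on both. Ordering the rows as $G_{0}, \ldots, G_{n-1}, H_{1}, \ldots, H_{n-1}$ and the columns so that the $Q$-cusps come last, the divisor matrix becomes block lower triangular,
\begin{equation*}
M \;=\; \begin{pmatrix} A & B \\ 0 & C \end{pmatrix},
\end{equation*}
with $A$ of size $n \times n$ recording the $G_{i}$'s on the $P$-cusps and $C$ of size $(n-1) \times (n-1)$ recording the $H_{j}$'s on the $Q$-cusps (after dropping one column to account for the degree-zero relation). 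Hence $|\det M| = |\det A| \cdot |\det C|$.

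The main step is the computation of $\det A$ and $\det C$. Both matrices acquire a twisted circulant structure from the cyclic action $i \mapsto i+1$ on the indexing, i.e.\ multiplication by $\alpha^{2}$ on the cusp labels, with entries built from values $B_{2}(\{\alpha^{k}/p\})$ of the second Bernoulli polynomial. Standard Fourier diagonalisation by Dirichlet characters of $(\Z/p\Z)^{\times}$ of the appropriate parity then expresses $\det A$ and $\det C$ as products of generalised Bernoulli numbers $B_{2, \chi}$ over even and odd characters respectively. The Takagi--Yu formula for $h_{1}(p)$ recorded in Theorem \ref{cuspidalclassnumberX1p} has exactly this shape, so matching $|\det A| \cdot |\det C|$ with $h_{1}(p)$ reduces to careful bookkeeping of the auxiliary powers of $p$ and of the sign $\beta$ contributed by the normalisations, together with the anomalous final rows $G_{n-1}$ and $H_{n-1}$ which slightly break the circulant pattern. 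This exponent bookkeeping is the principal obstacle; everything else is routine given Manin--Drinfeld, Proposition \ref{ordsvansX1}, and the known structure of Siegel-function divisors.
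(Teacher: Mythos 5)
Your overall strategy --- reduce to showing that the sublattice spanned by the divisors has index $h_{1}(p)$ in $\Div_{c}(p)$, i.e.\ that the divisor matrix has $|\det M| = h_{1}(p)$ --- is exactly the paper's. But the structural claim on which your determinant computation rests is false. By Proposition \ref{ordsvansX1}, for $H_{j} = F_{j}F_{n-1}^{-\alpha^{2j+2}+p\beta(\alpha^{2j+2}-1)}$ one gets
\begin{equation*}
\ord_{P_{k}}(H_{j}) \;=\; \tfrac{1}{12}\bigl(1 - \alpha^{2j+2} + p\beta(\alpha^{2j+2}-1)\bigr) \;=\; (\alpha^{2j+2}-1)\,\tfrac{p\beta-1}{12},
\end{equation*}
which is an integer (since $p\beta \equiv 1 \bmod 12$) but is not zero in general; it is merely independent of $k$. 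So $\divv(H_{j})$ is \emph{not} supported on the $Q$-orbit alone, the matrix $M$ has no vanishing block, and $|\det M| \ne |\det A|\cdot|\det C|$ as you propose. A second sign that the route cannot work as described: you expect $\det A$ and $\det C$ to produce Bernoulli numbers $B_{2,\chi}$ over even and odd characters respectively, but the Takagi--Yu formula is $h_{1}(p) = \bigl(p\prod_{\chi}\tfrac14 B_{2,\chi}\bigr)^{2}$ with the product over \emph{even} non-principal characters only --- a square of the even-character product, not (even)$\times$(odd). Odd characters never enter, because both cusp orbits are indexed by $\alpha^{i}$ with $i$ running over residues modulo $n$ and $B_{2}(\{x\})$ is even in $x$.

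The paper's actual device is different: it factors $M = AB$, where $A$ is the exponent matrix expressing the $G_{i}, H_{j}$ in terms of the $E_{i}, F_{j}$ (an almost-triangular integer matrix with $\det A = 12p^{2}$), and $B$ is the matrix of orders of vanishing of the $E_{i}, F_{j}$ themselves --- a $2n\times 2n$ block matrix with circulant blocks $\Circ(a_{0},\ldots,a_{n-1})$ in the diagonal positions and constant $\tfrac{1}{12}$ blocks off the diagonal. The determinant of $B$ is then extracted by row and column operations exploiting $\sum_{i} a_{i} = -n/12$ (Lemma \ref{columnsum}), which relate $\det B$ to $\det(N)^{2}$ for the single circulant $N$ with entries $a_{i}-\tfrac{1}{12}$; diagonalising $N$ by even Dirichlet characters (Lemma \ref{dets}) produces the square of the even-character Bernoulli product. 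To repair your argument you would need either to adopt this $M = AB$ factorisation, or to replace the $H_{j}$ by genuinely $Q$-supported units (which these are not) and redo the bookkeeping from scratch.
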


This basis allows us to compute the cuspidal group efficiently. We write $\Div_{c}(p)$ for the group of degree $0$ divisors on $X_{1}(p)$ supported on cusps. The cuspidal group is simply the quotient
\begin{center}
    $\mathcal{C}_{1}(p) := \Div_{c}(p) / \langle \divv(G_{i}), \divv(G_{j} :0 \le i,j \le n-1 \rangle$.
    \end{center}
Therefore, computing these groups abstractly reduces to determining the Smith normal form of the matrix representing the divisors of the explicit bases above. We discuss some examples in Section \ref{examples1}.

There are two natural subgroups of the group of  modular units. Firstly, we consider the subgroup of modular units whose divisors are supported on $P_{0}, \ldots, P_{n-1}$. Let $\gamma = \alpha^{p-2}$ and for $1 \le i \le n-2$ define
\begin{align*}
 I_{i} ( \tau  ) & = E_{i-1} ( \tau  ) E_{i} ( \tau  )^{ -\gamma^{2} -1}E_{i+1} ( \tau  )^{\gamma^{2}}
 \end{align*}
and $I_{n-1} ( \tau  ) = \left(E_{n-2} ( \tau  )E_{n-1} ( \tau  )^{-1} \right)^{p}$. These functions are all modular units on $X_{1}(p)$ by Proposition \ref{X1pcongs} and one uses Proposition \ref{ordsvansX1} to verify that their order of vanishing at any $Q_{i}$ is zero. 
In Section \ref{subgroups} we prove the following. 
\begin{thm} \label{basis2}
The functions $I_{1}, \ldots, I_{n-1}$ are a basis for the group of modular units on $X_{1}(p)$ whose divisors are supported on $P_{0}, \ldots, P_{n-1}$, modulo constants.
\end{thm}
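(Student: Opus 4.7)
The plan is to mirror the strategy behind Theorem~\ref{basisx1p2}: first verify that $I_{1}, \ldots, I_{n-1}$ are modular units on $X_{1}(p)$ with divisors supported on $P_{0}, \ldots, P_{n-1}$, then establish $\Z$-linear independence, and finally compare the lattice they span to a known invariant to force generation.

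The first step is essentially already recorded in the excerpt. Membership in the group of modular units for $\Gamma_{1}(p)$ follows from the congruence criteria of Proposition~\ref{X1pcongs}, and the fact that each divisor is supported on $\{P_{0}, \ldots, P_{n-1}\}$ follows from Proposition~\ref{ordsvansX1}: since each $I_{i}$ is built from $E_{j}$'s only, with no $F_{j}$ factors, the orders of vanishing at the $Q_{k}$ vanish identically.

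For linear independence, let $\Div_{c}^{P}(p) \cong \Z^{n-1}$ denote the lattice of degree-zero divisors supported on the $P_{j}$. I would assemble the $(n-1) \times n$ matrix $M = \bigl(\ord_{P_{j}}(I_{i})\bigr)$ using Proposition~\ref{ordsvansX1}. The shape of the defining formulas — each $I_{i}$ for $1 \leq i \leq n-2$ being a product of only three consecutive $E_{j}$'s with specific exponents, while $I_{n-1}$ involves just $E_{n-2}$ and $E_{n-1}$ — should yield a near-tridiagonal structure on $M$, from which the rank $n-1$ follows by direct inspection.

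The key step is generation. Let $U_{P}$ denote the group of modular units on $X_{1}(p)$ with divisor supported on $P_{0}, \ldots, P_{n-1}$, modulo constants; its rank is at most $n-1$ because its divisor lattice embeds into $\Div_{c}^{P}(p)$. Since the $I_{i}$ already span a sublattice of $U_{P}$ of full rank $n-1$, it remains to show that the inclusion
\[
\sum_{i=1}^{n-1} \Z \cdot \divv(I_{i}) \,\subseteq\, \{\divv(u) : u \in U_{P}\}
\]
is an equality. I would compute the index $\bigl[\Div_{c}^{P}(p) : \sum_{i} \Z \cdot \divv(I_{i})\bigr]$ via the determinant of an $(n-1) \times (n-1)$ submatrix of $M$, and compare it with the order of the subgroup of the cuspidal group $\mathcal{C}_{1}(p)$ generated by differences $[P_{i} - P_{j}]$. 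Since $P_{0}, \ldots, P_{n-1}$ are precisely the $\infty$-cusps of $X_{1}(p)$ (those lying above $\infty \in X_{0}(p)$, which is forced by $P_{i}$ having denominator $p$), the latter order is given by Yu's formula~\cite{yu1980cuspidal}. Equality of the two quantities forces the index of the inclusion above to be $1$, completing the proof.

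The main obstacle is the determinant calculation: writing the orders from Proposition~\ref{ordsvansX1} explicitly in terms of $\alpha$ and $p$, and simplifying the resulting $(n-1) \times (n-1)$ determinant to match Yu's formula for the order of the $\infty$-cuspidal subgroup. This is analogous to, though structurally simpler than, the corresponding index computation underlying Theorem~\ref{basisx1p2}, and the exploitation of the cyclic structure induced by the generator $\alpha$ should make the determinant amenable to evaluation as a product indexed by the powers $\alpha^{2i}$.
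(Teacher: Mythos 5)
Your plan is logically sound, but it takes a genuinely different route from the paper, and it leaves the one step that carries all the content unexecuted. The paper does not compute any determinant here: it cites Yang's Theorem~1 of \cite{yang2007modular}, which already exhibits a basis $f_{1},\ldots,f_{n-1}$ of $\mathcal{F}^{\infty}(p)$ built from functions $\tilde{E}_{i}(\tau)=-g_{(\alpha^{i}/p,\,0)}(p\tau)$ with exactly the exponent pattern of your $I_{i}$, and then simply checks via Proposition~\ref{ordsvansX1} that $\divv(I_{i})=\divv(f_{i})$ for every $i$; since a modular unit is determined by its divisor up to a constant, the $I_{i}$ inherit the basis property. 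Your route instead replays the index argument of Theorem~\ref{basisx1p2} inside $\Div_{c}^{\infty}(p)$: compute $\bigl[\Div_{c}^{\infty}(p):\sum_{i}\Z\cdot\divv(I_{i})\bigr]$ and match it against $h_{1}^{\infty}(p)=p\prod_{\chi}\tfrac{1}{4}B_{2,\chi}$ from Yu's formula, forcing the intermediate index to be $1$. The index chain is correct (note that $\divv(\mathcal{F}^{\infty}(p))=\Div_{c}^{\infty}(p)\cap\divv(\mathcal{F}(p))$ by definition, so the subgroup of $\mathcal{C}_{1}(p)$ generated by the $[P_{i}-P_{j}]$ really is $\mathcal{C}_{1}^{\infty}(p)$ and Yu's formula applies). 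What your approach buys is self-containedness; what it costs is that the $(n-1)\times(n-1)$ determinant evaluation you defer to the end is precisely the computation Yang already performed to prove his Theorem~1, so as written your proof is a reduction to a nontrivial identity rather than a proof. It should go through by the circulant techniques of Lemma~\ref{dets} (the matrix $\bigl(\ord_{P_{k}}(I_{i})\bigr)$ is a near-circulant in the $a_{k}=\tfrac{p}{2}B_{2}(\{\alpha^{k}/p\})$), but that work must actually be done.

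One small imprecision: the absence of $F_{j}$ factors does not by itself force $\ord_{Q_{k}}(I_{i})=0$. By Proposition~\ref{ordsvansX1} the order at $Q_{k}$ of $\prod E_{j}^{e_{j}}$ is $\sum_{j}e_{j}/12$, so you also need $\sum_{j}e_{j}=0$, which does hold for each $I_{i}$ (the exponents are $1$, $-\gamma^{2}-1$, $\gamma^{2}$, respectively $p$, $-p$) but should be said.
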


The second subgroup we consider is that consisting of modular units whose divisors are fixed by the action of Galois. By our choice of model, this group contains the previous subgroup and we extend our basis by considering the following function. If $p \equiv 1 \mod{4}$, we define
\begin{equation} \label{Cs}
    I_{n} \left( \tau \right) = 
        \left( E_{0} \left( \tau \right) E_{\frac{n}{2}} \left( \tau \right) \right)^{6}.
\end{equation}
When $p \ \equiv \ 3 \mod{4}$, let $m$ be the unique integer $0 \le m <n$ satisfying 
\begin{equation} 
   \alpha^{2m} \equiv  \begin{cases}
       -2  \mod{p} & \text{if} \ \ p  \equiv 3 \mod{8} \\
         2 \mod{p} & \text{if} \ \  p  \equiv 7 \mod{8}    \end{cases}
\end{equation}
and define 
\begin{equation} 
   I_{n} \left( \tau \right) =  \begin{cases}
       E_{0} \left( \tau \right)^{8} E_{m} \left(\tau \right)^{4}  \ & \text{if} \  p  \equiv 3 \mod{8} \\
         E_{0} \left( \tau\right)^{24} E_{m} \left( \tau \right)^{-12} \ & \text{if} \  p  \equiv 7 \mod{8}.
         \end{cases}
\end{equation}

\begin{thm} \label{basis3}
The functions $I_{1}, \ldots, I_{n-1}, I_{n}$ form a basis for the group of modular units on $X_{1}(p)$ with rational divisors, modulo constants. 
\end{thm}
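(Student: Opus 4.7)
The plan is to extend the basis from Theorem \ref{basis2} by one element, $I_{n}$, and to verify that (i) $I_{n}$ has rational divisor, (ii) $I_{n}$ is linearly independent from $I_{1}, \ldots, I_{n-1}$ modulo constants, and (iii) the enlarged set actually generates the subgroup of rational-divisor modular units, rather than merely a finite-index subgroup.

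I would begin with a dimension count. The group of modular units on $X_{1}(p)$ modulo constants is free abelian of rank $p-2 = 2n-1$ by Theorem \ref{basisx1p2}. Because the cusps $P_{i}$ are rational while $Q_{0}, \ldots, Q_{n-1}$ form a single Galois orbit, a cuspidal divisor is Galois-invariant precisely when the coefficients at the $Q_{j}$ all agree, giving $n-1$ independent linear conditions. Hence the subgroup of rational-divisor modular units has rank at most $n$, so a basis must contain exactly $n$ elements; combined with Theorem \ref{basis2}, this is why only one new function is needed.

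Next I would verify that $I_{n}$ lies in this subgroup. Using the criteria of Section \ref{constructions}, one checks case-by-case (according to $p \bmod 12$) that each $I_{n}$ is indeed a modular unit for $\Gamma_{1}(p)$. Then I would apply Proposition \ref{ordsvansX1} to compute $\divv(I_{n})$ and check that the order of vanishing at each $Q_{j}$ is the same nonzero integer, which I call $b$. The exponents $6$, $8$, $12$, $24$ and the choice of $m$ with $\alpha^{2m} \equiv \pm 2 \pmod{p}$ are calibrated precisely to kill the denominator $12$ appearing in the Siegel-function order formula and to symmetrise the contributions over the Galois orbit; I expect this to fall out of a routine but case-sensitive verification. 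Since $b \neq 0$, this simultaneously shows that $I_{n}$ is independent from the $P$-supported functions $I_{1}, \ldots, I_{n-1}$.

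The main obstacle is then showing that $\langle I_{1}, \ldots, I_{n} \rangle$ equals (rather than has finite index in) the rational subgroup. Equivalently, I must prove that $b$ is the smallest positive common $Q_{j}$-coefficient arising from any rational-divisor modular unit. The strategy I would follow is to expand an arbitrary rational-divisor modular unit in the basis $G_{0}, \ldots, G_{n-1}, H_{1}, \ldots, H_{n-1}$ from Theorem \ref{basisx1p2}, use Proposition \ref{ordsvansX1} to read off every $Q_{j}$-coefficient as a linear form in the basis exponents, and impose the $n-1$ equalities between these coefficients as the rationality constraint. The common value of the $Q_{j}$-coefficient is then a single linear form on this solution lattice, and one must show that its image in $\mathbb{Z}$ equals $b\mathbb{Z}$. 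This is where the case split $p \equiv 3, 7 \pmod{8}$ and the quadratic residue behaviour of $2$ modulo $p$ become unavoidable, and I expect the identity $\alpha^{2m} \equiv \pm 2$ together with the congruence structure of the Siegel-function order formula to drive the argument, much as they already forced the specific shape of $I_{n}$.
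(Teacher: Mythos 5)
Your outline is essentially the paper's argument for steps (i) and (ii), but you have substantially overestimated the difficulty of step (iii), and the heavy lattice computation you propose there is unnecessary. The key numerical fact you leave uncomputed is that $b=1$: in every case the function $I_{n}$ is a product of the $E_{i}$ only (no $F_{j}$ factors), so by Proposition \ref{ordsvansX1} its order of vanishing at each $Q_{j}$ is $\frac{1}{12}\sum_{i} e_{i}$, and the exponents $(6,6)$, $(8,4)$, $(24,-12)$ are chosen precisely so that this sum is $12$ in each case, giving $\operatorname{ord}_{Q_{j}}(I_{n})=1$ for all $j$. Once this is known, the generation statement is immediate and requires no expansion in the $G_{i},H_{j}$ basis: any $f\in\mathcal{F}^{\Q}(p)$ has divisor $\sum_{i} d_{i}P_{i} + d\sum_{i} Q_{i}$, so $f/I_{n}^{d}$ has divisor supported on the $P_{i}$ alone, hence lies in $\mathcal{F}^{\infty}(p)$ and, by Theorem \ref{basis2}, in the span of $I_{1},\ldots,I_{n-1}$ up to constants. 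In your language, the image of the common-$Q$-coefficient linear form certainly contains $b\Z=\Z$ because $I_{n}$ itself realises the value $1$, so the equality you flag as the main obstacle is automatic. Your route would work if carried out, but it reduces to this observation; I would also caution that your claim that the exponents and the choice of $m$ are calibrated to ``symmetrise the contributions over the Galois orbit'' is slightly off target --- the orders $\operatorname{ord}_{Q_{j}}(I_{n})$ are equal for all $j$ for free (they depend only on $\sum_{i}e_{i}$), and the role of $m$ and the case split modulo $8$ is to make $I_{n}$ satisfy the congruence $\sum_{i} e_{i}\alpha^{2i}\equiv 0 \pmod{p}$ of Proposition \ref{X1pcongs}, i.e.\ to make it a modular unit at all.
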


Furthermore, in Section \ref{subgroups} we use the above bases to prove that the rational cuspidal group is generated by equivalence classes of divisors supported on $P_{0}, \ldots, P_{n-1}$. This gives us a practical method for computing the rational cuspidal group, and we discuss our computations in Section \ref{examples1}. 

Moreover, these bases allow us to find `good generators' for the cuspidal group and its rational subgroup.  We discuss this further in Section \ref{examples1}. We found that in all of our calculations, the linear equivalence classes of $P_{0} -Q_{0}$ and $P_{0} - P_{n-1}$ are prominent in our representatives. They appear to generate a large factor of the groups themselves, and this motivates us to compute their orders, for which we find explicit formulae in Section \ref{ordersofcusps}.

\subsection{Overview} This paper is organised as follows. In Section \ref{modularcurves} we review some basic facts about modular curves and their cusps. Section \ref{siegelfuns} gives the necessary background on Siegel functions required for our constructions. In Section \ref{constructions} we prove a necessary and sufficient criteria for certain products on Siegel functions to be modular units on $X_{1}(p)$. In Section \ref{basissec} we prove Theorem \ref{basisx1p2} using the results of Section \ref{constructions} and the known cuspidal class number. In Section \ref{subgroups} we prove Theorems \ref{basis2}  and \ref{basis3}. In Section \ref{examples1} we use our explicit bases to explicitly compute the cuspidal group and its rational subgroup for $11 \le p \le 997$, and analyse their structures. In the final section we explicitly determine the orders of two specific elements of the cuspidal group, which appear to play a crucial role in its structure. 
\begin{Ack} The author sincerely thanks Samir Siksek and Damiano Testa for many helpful conversations. The author is supported by the EPSRC Doctoral Prize fellowship EP/W524335/1.
\end{Ack} 

\subsection*{Notation}
\begin{itemize}
    \item[] $p \ge 5$ : prime number 
    \item[] $n := \frac{1}{2}(p-1)$
\item[] $\alpha$ : a fixed integer whose reduction modulo $p$ generates $(\Z / p \Z)^{*} $
\item[]$\beta$ : the unique $\beta \in \{ \pm 1, \pm 5 \}$ such that $p \equiv \beta \mod{12}$
\item[] $\omega$ : a primitive $n$th root of unity 
\item[] $P_{i}$ : the cusp of $X_{1}(p)$ represented by $\frac{\alpha^{i}}{p}$, with $0 \le i \le n-1$
\item[] $Q_{i}$ : the cusp of $X_{1}(p)$ represented by $\frac{p}{\alpha^{i}}$, with $0 \le i \le n-1$
\item[] $g_{\mathbf{a}}(\tau)$ : the Siegel function defined by $\mathbf{a} \in \Q^{2}$
\item[] $\{ x \}$ : the fractional part of $x \in \R$
\item[] $B_{2}(x) = x^{2} - x + \frac{1}{6}$ : the second Bernoulli polynomial
\item[] $a_{i} = \frac{p}{2} B_{2} ( \{ \frac{\alpha^{i}}{p} \})$ for $ 0 \le i \le n-1$ 
\item[] $B_{2, \chi} = p \sum_{a=1}^{p} \chi(a) B_{2}(\frac{a}{p})$ where $\chi$ is a Dirichlet character modulo $p$
\item[] $\Div_{c}(p)$ : the group of degree $0$ divisors on $X_{1}(p)$, supported on cusps 
\item[] $\Div_{c}^{\Q}(p)$ : the group of degree $0$ divisors on $X_{1}(p)$, supported on cusps and fixed by the Galois action
\item[] $\Div_{c}^{\infty}(p)$ : the group of degree $0$ divisors on $X_{1}(p)$, supported on $P_{1}, \ldots, P_{n}$
\item[] $\mathcal{F}(p)$ : the group of modular units on $X_{1}(p)$
\item[] $\mathcal{F}^{\Q}(p)$ : the group of modular units on $X_{1}(p)$ whose divisors are in 
$\Div_{c}^{\Q}(p)$ 
\item[] $\mathcal{F}^{\infty}(p)$ : the group of modular units on $X_{1}(p)$ whose divisors are in $\Div_{c}^{\infty}(p)$ 
\item[] $\mathcal{C}_{1}(p) := \Div_{c}(p) / \langle \divv(f) : f \in \mathcal{F}(p) \rangle$
\item[] $\mathcal{C}_{1}^{\Q}(p) := \Div_{c}^{\Q}(p) / \langle \divv(f) : f \in \mathcal{F}^{\Q}(p) \rangle$
\item[] $\mathcal{C}_{1}^{\infty}(p) := \Div_{c}^{\infty}(p) / \langle \divv(f) : f \in \mathcal{F}^{\infty}(p) \rangle$
\item[] $h_{1}(p) = \vert \mathcal{C}_{1}(p) \vert$
\item[] $h_{1}^{\infty} (p) = \vert \mathcal{C}_{1}^{\infty}(p)\vert$
\end{itemize}

\section{Modular Curves and their Cusps} \label{modularcurves}
In this section we briefly recall some key (and well-known) facts about modular curves; the reader should refer to \cite[Chapters 1-3]{diamond2005first} for details. 
Fix a positive integer $N$ and let $X\left( N\right) / \Q $, $X_{1}\left( N\right) / \Q $ and $X_{0}\left( N\right) / \Q $ denote the  Shimura model over $\Q$ of the classical modular curves associated to the congruence subgroups $\Gamma \left( N \right)$, $\Gamma_{1} \left( N\right)$ and $\Gamma_{0}\left( N\right)$ respectively, which are classically defined as:
\begin{align*}
       & \Gamma \left( N\right) = \{ M \in \text{SL}_{2}\left( \Z \right) \ \vert \ M \equiv \begin{psmallmatrix}
        1 & 0 \\
        0 & 1
    \end{psmallmatrix} \mod{N} \}; \\
     & \Gamma_{1}\left( N \right) = \{ M \in \text{SL}_{2}\left( \Z \right) \ \vert \ M \equiv \begin{psmallmatrix}
        1 & * \\
        0 & 1
    \end{psmallmatrix} \mod{N} \}; \\   
       & \Gamma_{0}\left( N\right) = \{ M \in \text{SL}_{2}\left( \Z \right) \ \vert \ M \equiv \begin{psmallmatrix}
        * & * \\
        0 & *
    \end{psmallmatrix} \mod{N} \}.    \end{align*}

The inclusions  $ \Gamma \left( N\right) \subset \Gamma_{1} \left( N\right) \subset \Gamma_{0}\left( N\right)$ induce natural projection maps 
\begin{center}
   $ X ( N)  \xrightarrow{\pi_{1} } X_{1} ( N) \xrightarrow{\pi_{0} } X_{0}(N ) $ 
 \end{center}
which are non-constant, coverings of the corresponding modular curves. The non-cuspidal points of $X \left( N \right)$, $X_{1} \left( N\right)$ and $X_{0} \left( N\right)$ parametrise isomorphism classes of elliptic curves with a basis for its $N$-torsion subgroup, a torsion point of order $N$ and a cyclic subgroup of order $N$ respectively. The cusps of these curves correspond to orbits of $\mathbb{P}^{1}( \Q) := \Q \cup \{ i \infty \}$ under the action of the corresponding congruence subgroups, and in \cite{Ogg} Ogg gave an explicit description of these, which we briefly review here. From now on, we assume that $N \ge 11$, which ensures that $X_{1}( N )$ has positive genus.

The cusps of $X( N )$ can be regarded as pairs $\pm \begin{psmallmatrix}
    x \\ y 
\end{psmallmatrix}$, where $x,y \in \left( \Z / N \Z \right)$ are relatively prime. The group $G = \text{SL}_{2} \left( \Z  \right) / \Gamma\left( N\right) $ acts on the cusps, and thus the cusps of $X_{1} \left( N\right)$ and $X_{0} \left( N \right)$ are simply orbits of $\Gamma_{1} \left( N\right)/ \Gamma \left( N\right))$ and $\Gamma_{0} \left( N\right) / \Gamma \left( N\right)$.

As for rationality, we recall that in the canonical model of $X \left( N \right)$, the cusps are rational in $\Q \left( \zeta_{N} \right) = \Q \left( e^{2 \pi i /N} \right)$, see \cite{shimura1971introduction}. We choose the canonical model over $\Q$ in which the  Galois group $\Gal \left( \Q\left( \zeta_{N} \right) / \Q \right) \isom \left( \Z / N \Z \right)^{\times}$  acts as $\begin{psmallmatrix}
    1 & 0 \\ 0 & \sigma
\end{psmallmatrix}$. That is, for any cusp $P = \pm \begin{psmallmatrix}
    x \\ y 
\end{psmallmatrix}$ and $\sigma \in \left( \Z / p \Z \right)^{\times}$, representing the automorphism defined by $ \zeta_{p} \mapsto \zeta_{p}^{\sigma}$,  
\begin{center}
    $\sigma \left( P \right) = \pm \begin{psmallmatrix}
        x \\ \sigma y 
    \end{psmallmatrix}$.
\end{center}
The following results are proved in \cite[Proposition 1 and 2]{Ogg}.
\begin{prop} \label{cuspsX1p}
 Suppose $N  = 11$ or $N \ge 13$. The cusps of $X_{1}(N)$ are represented by pairs $ \pm \begin{psmallmatrix}
     x \\ y 
 \end{psmallmatrix}$ where $x,y$ are reduced modulo $N$, $\gcd(y,N) = d$ for some positive $d \vert N$ and $\gcd(x,y) = 1 $. The cusps with a fixed value of $\pm y$ are Galois conjugate. 
\end{prop}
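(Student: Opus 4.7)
The strategy is to follow Ogg's original approach: deduce the classification of cusps for $X_1(N)$ from the classical one for $X(N)$ and then use the specified canonical model to track Galois orbits.

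I would first recall that the cusps of $X(N)$ are in bijection with pairs $\pm\binom{x}{y} \in (\Z/N\Z)^2$ with $\gcd(x,y,N)=1$, modulo the identification induced by $\pm I$. The cusps of $X_1(N)$ are then the $\Gamma_1(N)/\Gamma(N)$-orbits of these pairs. To make the orbit structure explicit, I would identify the image of $\Gamma_1(N)$ in $\mathrm{SL}_2(\Z)/(\{\pm I\}\Gamma(N))$ as the cyclic group generated by the class of $T = \left(\begin{smallmatrix} 1 & 1 \\ 0 & 1 \end{smallmatrix}\right)$, and compute that $T^t$ sends $\pm\binom{x}{y}$ to $\pm\binom{x+ty}{y}$. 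Consequently, $\pm y \in \Z/N\Z$ is an invariant of each orbit, while $x$ is determined only modulo $\gcd(y,N)$. Given a pair $(x,y)$ with $\gcd(x,y,N)=1$, a short Dirichlet-style argument then produces an integer lift with $\gcd(x,y)=1$: for a chosen integer lift $y_0$ of $y$, one adjusts the lift of $x$ by multiples of $N$ to avoid each prime dividing $y_0$ but not already forbidden by $\gcd(x,y,N)=1$. This gives the description in the first sentence of the proposition.

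For the second sentence, I would use the assumption on the canonical model: $\sigma \in (\Z/N\Z)^\times$ acts as $\left(\begin{smallmatrix} 1 & 0 \\ 0 & \sigma \end{smallmatrix}\right)$, sending $\pm\binom{x}{y}$ to $\pm\binom{x}{\sigma y}$. Since this commutes with the $\Gamma_1$-translation $x \mapsto x+ty$ (up to reparametrising $t$), the Galois orbit of a cusp of $X_1(N)$ is controlled entirely by the $(\Z/N\Z)^\times$-orbit of $\pm y$, so cusps sharing a given value of $\pm y$ lie in a common Galois orbit, which is the second sentence.

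The only mildly technical step is the Dirichlet-style adjustment producing a representative with $\gcd(x,y) = 1$ rather than merely $\gcd(x,y,N) = 1$; the remainder of the argument is direct bookkeeping with the explicit description of the quotient groups $\Gamma_1(N)/\Gamma(N)$ and $\mathrm{Gal}(\Q(\zeta_N)/\Q)$.
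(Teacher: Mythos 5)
The paper itself does not prove this proposition; it is quoted from Ogg, so I can only compare your argument with the standard one. Your treatment of the first sentence is correct and is exactly the expected route: the image of $\Gamma_{1}(N)$ in $\SL_{2}(\Z)/\{\pm I\}\Gamma(N)$ is generated by the class of $T$, which acts on cusp labels by $\begin{psmallmatrix} x \\ y \end{psmallmatrix}\mapsto\begin{psmallmatrix} x+ty \\ y \end{psmallmatrix}$, so $\pm y$ is an orbit invariant, $x$ matters only modulo $\gcd(y,N)$, and the CRT-style adjustment produces an integer lift with $\gcd(x,y)=1$.

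The second half has a genuine gap: the conclusion you draw does not follow from the computation you make, and is in fact contradicted by it. You correctly record that $\sigma$ acts by $\begin{psmallmatrix} x \\ y \end{psmallmatrix}\mapsto\begin{psmallmatrix} x \\ \sigma y \end{psmallmatrix}$, i.e.\ Galois \emph{fixes} the coordinate $x$ and only rescales $y$, while the $\Gamma_{1}$-identification fixes $y$ and translates $x$ by multiples of $y$. Hence two cusps with the same $\pm y$ but distinct values of $x$ modulo $\gcd(y,N)$ are never Galois conjugate; what these computations actually give is the transposed statement, namely that for a fixed divisor $d=\gcd(y,N)$ and a fixed $x$, the cusps with varying $y$ form a single Galois orbit, defined over $\Q(\zeta_{N/d})^{+}$. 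The cleanest sanity check is in this very paper: the cusps $P_{0},\dots,P_{n-1}$ of $X_{1}(p)$ all have $y\equiv 0$, yet each is individually rational, so they are pairwise non-conjugate, whereas the $Q_{j}$, which share $x\equiv 0$, do form one orbit. So either the second sentence of the proposition is to be read with the roles of $x$ and $y$ exchanged (which is how the rest of the paper actually uses it), or your argument must be repaired; as written, ``the Galois orbit is controlled by the $(\Z/N\Z)^{\times}$-orbit of $\pm y$'' does not yield ``cusps sharing a given value of $\pm y$ lie in a common Galois orbit,'' and no argument can, because in that form the claim fails for $y\equiv 0 \bmod N$.
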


\begin{prop}
Suppose that $N \ge 5$. The cusps of $X_{0}(N)$ are represented by $ \pm \begin{psmallmatrix}
     x \\ d
 \end{psmallmatrix}$ where $d \vert N$ is positive, and $x$ is relatively prime to $d$ and reduced modulo $\gcd(\frac{N}{d}, d )$.
 \end{prop}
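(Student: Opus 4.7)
The plan is to identify cusps of $X_{0}(N)$ with $\Gamma_{0}(N)$-orbits on $\PP^{1}(\Q)$, and to compute these orbits from the pair description in Proposition \ref{cuspsX1p}. Since the action of $\Gamma_{0}(N)$ on $\PP^{1}(\Q)$ factors through the finite quotient $G_{0} := \Gamma_{0}(N)/\Gamma(N)$, which is identified with the upper-triangular subgroup of $\SL_{2}(\Z/N\Z)$, cusps of $X_{0}(N)$ correspond to $G_{0}$-orbits of pairs $\pm\begin{psmallmatrix} x \\ y \end{psmallmatrix} \in (\Z/N\Z)^{2}$ with $\gcd(x,y,N) = 1$, where $\begin{psmallmatrix} a & b \\ 0 & d \end{psmallmatrix}$ (with $ad \equiv 1 \pmod{N}$) acts via $(x,y) \mapsto (ax + by, \, dy)$.

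\textbf{Existence of the normal form.} Given a representative $\pm(x,y)$, set $d_{0} := \gcd(y, N)$, a positive divisor of $N$. As $y/d_{0}$ is coprime to $N/d_{0}$, the Chinese Remainder Theorem produces $u \in (\Z/N\Z)^{*}$ with $uy \equiv d_{0} \pmod{N}$, and applying $\begin{psmallmatrix} u^{-1} & 0 \\ 0 & u \end{psmallmatrix}$ moves the cusp to $\pm(x', d_{0})$ for some $x'$. An upper-triangular shift $\begin{psmallmatrix} 1 & b \\ 0 & 1 \end{psmallmatrix}$ then alters $x'$ by an arbitrary multiple of $d_{0}$, so $x'$ can be reduced modulo $d_{0}$, and the condition $\gcd(x', d_{0}) = 1$ follows from $\gcd(x,y,N) = 1$ together with $d_{0} \mid N$.

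\textbf{Uniqueness and invariant.} Suppose two normal forms $(x_{1}, d_{1})$ and $(x_{2}, d_{2})$ (with $d_{i} \mid N$ positive and $\gcd(x_{i}, d_{i}) = 1$) are $G_{0}$-equivalent up to the overall sign, so $(ax_{1} + bd_{1}, \, a^{-1}d_{1}) \equiv \varepsilon (x_{2}, d_{2}) \pmod{N}$ for some $\varepsilon \in \{\pm 1\}$. Comparing second coordinates forces $d_{1} = d_{2} =: d$ and $a \equiv \varepsilon \pmod{N/d}$. Reducing the first-coordinate equation modulo $e := \gcd(N/d, d)$ gives $a x_{1} \equiv \varepsilon x_{2} \pmod{e}$, and substituting $a \equiv \varepsilon \pmod{e}$ the two factors of $\varepsilon$ cancel to yield $x_{1} \equiv x_{2} \pmod{e}$. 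Conversely, if $x_{1} \equiv x_{2} \pmod{e}$ with $\gcd(x_{i}, d) = 1$, the CRT constructs $a \in (\Z/N\Z)^{*}$ simultaneously satisfying $a \equiv 1 \pmod{N/d}$ and $a \equiv x_{1}^{-1} x_{2} \pmod{d}$ (the two conditions agreeing modulo $e$), and $b$ is then chosen to realise the first-coordinate congruence modulo $N$. Hence each orbit has a unique representative $\pm(x, d)$ with $d$ a positive divisor of $N$, $\gcd(x, d) = 1$, and $x$ determined modulo $e = \gcd(N/d, d)$.

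\textbf{Main obstacle.} The delicate point is the coupled bookkeeping of the two sign choices: the overall $\pm$ on the cusp pair and the $a \equiv \pm 1 \pmod{N/d}$ from the stabilizer of $d$ must be taken with a consistent $\varepsilon$, and their cancellation modulo $e$ is what produces the single invariant $x \pmod{e}$ rather than the weaker $\pm x \pmod{e}$. Everything else reduces to routine CRT manipulations and the standard surjectivity of $\SL_{2}(\Z) \to \SL_{2}(\Z/N\Z)$.
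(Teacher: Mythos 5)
Your proof is correct. The paper does not prove this proposition itself --- it simply cites Ogg (Propositions 1 and 2 of \cite{Ogg}) --- and your argument is the standard one underlying that reference: identify cusps of $X_{0}(N)$ with orbits of the upper-triangular subgroup of $\SL_{2}(\Z/N\Z)$ on pairs $\pm(x,y)$, normalise the second coordinate to $d=\gcd(y,N)$, and extract the invariant $x \bmod \gcd(N/d,d)$ via the sign cancellation you correctly flag; the CRT and lifting steps are all sound.
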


\begin{remark}
For any cusp  represented by  $ \begin{psmallmatrix}
     x \\ y
 \end{psmallmatrix}$ we choose a representative $(x',y') \in \Z^{2}$ with $x',y'$ coprime, $y' \neq 0$ and $x \equiv x' \mod{N}$, $y \equiv y' \mod{N}$. Throughout this paper we refer to the rational number $\frac{x'}{y'}$ as a representative for the cusp. 
 
 \end{remark}

\section[Siegel]{Siegel Functions and Modular units on \texorpdfstring{$X(p)$}{k}} \label{siegelfuns}
In \cite{KL}, Kubert and Lang describe the group of modular units on $X ( N )$, when $N$ is a positive integer coprime to $6$. As in this reference, the modular units constructed in the proceeding sections are products of \textit{Siegel functions}. In this section we review essential properties of Siegel functions and state some key results due to Kubert and Lang. 

\subsection{Siegel Functions}
Let $\mathbf{a} = \left( a_{1}, a_{2} \right) \in \Q^{2}$ be such that $ \mathbf{a} \notin \Z^{2}$. The \textit{Siegel functions} associated to $\mathbf{a}$ is a function on the upper half plane defined by the infinite product presentation 
\begin{center}
    $g_{\mathbf{a}} ( \tau ) = - e^{2 \pi i a_{2} (a_{1} -1)/2} q_{\tau}^{B_{2}\left( a_{1} \right)/2} \left( 1 - q_{\tau} \right) \displaystyle \prod_{n=1}^{\infty} \left( 1 - q_{\tau}^{n} q_{z} \right) \left( 1 - q_{\tau}^{n}/q_{z} \right) $
\end{center}
where $z = a_{1} \tau + a_{2}$, $q_{\tau} = e^{2 \pi i \tau}$, $q_{z} = e^{2 \pi i z}$ and $B_{2} = x^{2} - x + \frac{1}{6}$ is the second Bernoulli polynomial. Equivalently, Siegel functions can be defined in terms of Klein forms and the Dedekind eta function, see \cite{KL} for details. The constructions of Section \ref{constructions} use the following elementary properties of $g_{\mathbf{a}}$.

\begin{prop} \label{trans2}
For $\mathbf{a} = \left( a_{1}, a_{2} \right) \in \Q^{2} \setminus  \Z^{2}$, $\mathbf{b} = \left( b_{1}, b_{2} \right) \in \Z^{2}$  and $\gamma = \begin{psmallmatrix}
        a & b \\ c & d \end{psmallmatrix} \in SL_{2}\left( \Z \right)$
\begin{itemize}
    \item[i)] $ g_{-\mathbf{a}} ( \tau )  =  - g_{\mathbf{a}} ( \tau )$;
    \item[ii)] $ g_{\mathbf{a} + \mathbf{b}} ( \tau )  = \epsilon \left( \mathbf{a}, \mathbf{b} \right) g_{\mathbf{a}}( \tau )$ where $\epsilon \left( \mathbf{a}, \mathbf{b} \right) = \left( -1 \right)^{b_{1}b_{2} + b_{1} + b_{2}} e^{- \pi i \left( b_{1}a_{2} - b_{2}a_{1} \right)}$;
    \item[iii)]   $g_{\mathbf{a}} \left( \gamma \tau \right) = \zeta \left( \gamma \right) g_{\mathbf{a} \gamma } ( \tau )$ where $ \zeta \left( \gamma \right)$ is a $12$th root of unity, depending only on $\gamma$.
    \end{itemize}
\end{prop}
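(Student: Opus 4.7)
My plan is to prove the three transformation formulas by working directly from the infinite product definition of $g_{\mathbf{a}}(\tau)$. Parts (i) and (ii) reduce to elementary bookkeeping with the identities $q_{-z} = q_z^{-1}$ and $q_{z + b_1\tau + b_2} = q_\tau^{b_1} q_z$ (valid for $b_2 \in \Z$), whereas part (iii) requires the classical transformation laws of the Klein form and of the Dedekind eta function and will be the main technical step.

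For (i), I substitute $\mathbf{a} \to -\mathbf{a}$ in the product. The Bernoulli exponent satisfies $B_2(-a_1) - B_2(a_1) = 2a_1$, so the leading $q_\tau$-power picks up an extra $q_\tau^{a_1}$; the exponential prefactor gains $e^{2\pi i a_2}$; the single factor $(1 - q_z)$ becomes $1 - q_z^{-1} = -q_z^{-1}(1 - q_z)$; and the infinite product over $n \geq 1$ is manifestly symmetric under $q_z \leftrightarrow q_z^{-1}$. Combining these with the identity $q_\tau^{a_1} q_z^{-1} = e^{-2\pi i a_2}$ makes all exponential phases cancel, leaving exactly the overall factor $-1$.

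For (ii), by the cocycle identity $\epsilon(\mathbf{a}, \mathbf{b} + \mathbf{b}') = \epsilon(\mathbf{a}, \mathbf{b})\, \epsilon(\mathbf{a} + \mathbf{b}, \mathbf{b}')$, which is a direct algebraic check, it suffices to verify the formula for the two generators $\mathbf{b} = (0,1)$ and $\mathbf{b} = (1,0)$. For $\mathbf{b} = (0,1)$ only the leading phase changes, producing exactly $-e^{\pi i a_1}$. For $\mathbf{b} = (1,0)$ the Bernoulli term gains $q_\tau^{a_1}$, the leading phase gains $e^{\pi i a_2}$, and the shift $q_z \mapsto q_\tau q_z$ reindexes the infinite product; using $(1 - q_z^{-1})/(1 - q_z) = -q_z^{-1}$ together with $q_\tau^{a_1} q_z^{-1} = e^{-2\pi i a_2}$ collects the shift into the factor $-e^{-\pi i a_2}$, matching the claim.

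Part (iii) is the main obstacle, and my plan is to invoke auxiliary modular objects. Up to a normalising factor, $g_{\mathbf{a}}$ can be expressed as $-k_{\mathbf{a}}(\tau)\, \eta(\tau)^2$, where $k_{\mathbf{a}}$ is the Klein form and $\eta$ is the Dedekind eta function. The Klein form is covariant of weight $-1$, satisfying $k_{\mathbf{a}}(\gamma\tau) = (c\tau + d)^{-1} k_{\mathbf{a}\gamma}(\tau)$, while Dedekind's classical functional equation gives $\eta(\gamma\tau) = \zeta_{24}(\gamma)(c\tau + d)^{1/2}\eta(\tau)$ for some $24$th root of unity $\zeta_{24}(\gamma)$ depending only on $\gamma$. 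Squaring the latter and multiplying, the $(c\tau + d)$-factors cancel and we obtain $g_{\mathbf{a}}(\gamma\tau) = \zeta_{24}(\gamma)^2\, g_{\mathbf{a}\gamma}(\tau)$, with $\zeta_{24}(\gamma)^2$ a $12$th root of unity, as required. The technical weight of this step lies in Dedekind's functional equation; I would cite Kubert--Lang or Apostol rather than reprove it from scratch.
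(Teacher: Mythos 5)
Your proposal is correct and follows essentially the same route as the paper, which simply cites Kubert--Lang (pp.~27--29) for the product manipulations behind (i) and (ii) and the Klein form/Dedekind eta decomposition for (iii); you have merely written out the details that the paper delegates to those references. The individual computations check out (the phase bookkeeping via $q_{\tau}^{a_{1}}q_{z}^{-1}=e^{-2\pi i a_{2}}$, the cocycle reduction to the generators $(1,0)$ and $(0,1)$, and the cancellation of the $(c\tau+d)$ factors between the weight $-1$ Klein form and $\eta^{2}$), so nothing further is needed.
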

\begin{proof}
 Parts (i) and (ii) follow from the definition of $g_{\mathbf{a}} ( \tau )$ and the results proved in \cite[Pages 27-29]{KL}. Part (iii) follows from the definition of $g_{\mathbf{a}} ( \tau )$ and the transformation formula for the Dedekind eta function proved in  \cite{newman1957construction}.
 \end{proof}

It follows from the above transformations and some elementary algebra that $g_{\mathbf{a}}(\tau)^{12p}$ is invariant under the action of $\Gamma(p)$. Moreover, as argued in \cite[Pages 30-31]{KL}, $g_{\mathbf{a}}(\tau)$  has no zeros or poles in the upper half plane, and hence $g_{\mathbf{a}}(\tau)^{12p}$ is a modular unit on $X(p)$. From the product expansion above we deduce the following. 
\begin{prop} \label{ordga}
For any  $\mathbf{a} = \left( a _{1}, a_{2} \right) \in \Q^{2} \setminus \Z^{2}$ 
\begin{center}
    $\text{ord}_{q}\left( g_{\mathbf{a}} ( \tau ) \right) = \frac{1}{2} B_{2} \left( \{ a_{1} \} \right)$
\end{center}
where $B_{2}$ is the second Bernoulli polynomial as defined above and  $\{ x \} \in \left[ 0 , 1 \right)$ denotes the fractional part of $x \in \mathbb{R}$.
\end{prop}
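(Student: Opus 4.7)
The plan is to read off the $q_{\tau}$-order directly from the defining product expansion, after first using translation invariance (up to a root of unity) to put $a_{1}$ into the fundamental domain $[0,1)$.

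First I would invoke Proposition \ref{trans2}(ii) with $\mathbf{b} = (-k, 0)$, where $k \in \Z$ is the integer with $a_{1} - k = \{a_{1}\}$. This gives
\[
g_{(\{a_{1}\}, a_{2})}(\tau) = \epsilon\left((a_{1},a_{2}), (-k,0)\right) \, g_{\mathbf{a}}(\tau),
\]
and since $\epsilon$ is a (nonzero) root of unity, $\ord_{q}\bigl(g_{\mathbf{a}}\bigr) = \ord_{q}\bigl(g_{(\{a_{1}\}, a_{2})}\bigr)$. Thus it suffices to prove the formula assuming $0 \le a_{1} < 1$, in which case $B_{2}(\{a_{1}\}) = B_{2}(a_{1})$.

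Next I would examine the product expansion
\[
g_{\mathbf{a}}(\tau) = -e^{2 \pi i a_{2}(a_{1}-1)/2} \, q_{\tau}^{B_{2}(a_{1})/2} \, (1-q_{z}) \prod_{n=1}^{\infty}\bigl(1 - q_{\tau}^{n} q_{z}\bigr)\bigl(1 - q_{\tau}^{n}/q_{z}\bigr)
\]
factor by factor, using $q_{z} = e^{2\pi i a_{2}} q_{\tau}^{a_{1}}$. The leading coefficient $-e^{2\pi i a_{2}(a_{1}-1)/2}$ is a nonzero constant and contributes $0$ to the order. The factor $q_{\tau}^{B_{2}(a_{1})/2}$ contributes exactly $B_{2}(a_{1})/2$. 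It remains to check that every remaining factor has $q_{\tau}$-order $0$ and nonvanishing leading term. For $n \ge 1$, the factor $1 - q_{\tau}^{n} q_{z} = 1 - e^{2 \pi i a_{2}} q_{\tau}^{n + a_{1}}$ has exponent $n + a_{1} > 0$, and similarly $1 - q_{\tau}^{n}/q_{z} = 1 - e^{-2 \pi i a_{2}} q_{\tau}^{n - a_{1}}$ has exponent $n - a_{1} > 0$ because $a_{1} < 1 \le n$; each is therefore $1 + O(q_{\tau}^{s})$ with $s > 0$, contributing $0$ to the order.

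The only subtle case is the single factor $1 - q_{z} = 1 - e^{2 \pi i a_{2}} q_{\tau}^{a_{1}}$. If $a_{1} > 0$, its leading term is $1$, contributing $0$. If $a_{1} = 0$, the factor collapses to the constant $1 - e^{2 \pi i a_{2}}$; since $\mathbf{a} \notin \Z^{2}$ forces $a_{2} \notin \Z$ in this case, this constant is nonzero and again contributes $0$ to the order. Summing contributions yields $\ord_{q}(g_{\mathbf{a}}) = B_{2}(\{a_{1}\})/2$, as claimed. The computation is essentially routine; the only point of care is the $a_{1}=0$ case, where one must use the hypothesis $\mathbf{a}\notin\Z^{2}$ to rule out vanishing of the constant term.
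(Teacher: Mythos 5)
Your proof is correct and follows the same route the paper intends: the paper simply asserts that the formula follows from the product expansion, and you have filled in the details, in particular the reduction to $a_{1} \in [0,1)$ via Proposition \ref{trans2}(ii), which is exactly the step that makes the fractional part appear, and the check that the factor $1-q_{z}$ is a nonzero constant when $a_{1}=0$. (Note that you have also silently corrected the factor $(1-q_{\tau})$ in the paper's displayed expansion to the standard $(1-q_{z})$; either version gives the same order, so this does not affect the argument.)
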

 
\subsection{Modular Units on \texorpdfstring{$X( p )$}{d}}
Kubert and Lang \cite[Chapter 4, Theorem 1.3]{KL} proved that if $N$ is coprime to $6$, the modular units on $X(N)$ are products of Siegel functions. 
\begin{thm}{(Kubert, Lang)} \label{KLthm}
  Let $N \in \mathbb{N}$ be such that $\text{gcd}\left( N , 6 \right) =1$ and let  
  \begin{equation}
f ( \tau ) = \displaystyle \prod_{\mathbf{a}} g_{\mathbf{a}}( \tau )^{e_{\mathbf{a}}}
  \end{equation}
 where the product is taken over all $\mathbf{a} \in \frac{1}{N} \Z^{2} / \Z^{2}$, and the exponents  are integers satisfying the following congruence relations:
  \begin{itemize}
      \item $\displaystyle \sum_{\mathbf{a}} e_{\mathbf{a}} \left( a_{1}N \right)^{2} \equiv 0 \mod{N}; $
      \item $ \displaystyle \sum_{\mathbf{a}} e_{\mathbf{a}} \left( a_{2}N \right)^{2} \equiv 0 \mod{N}; $
\item $ \displaystyle \sum_{\mathbf{a}} e_{\mathbf{a}} \left( a_{1} N \right) \left( a_{2} N \right) \equiv 0 \mod{N};  $ 
      \item $\displaystyle \sum_{\mathbf{a}} e_{\mathbf{a}} \equiv 0 \ \text{mod} \ 12$.
  \end{itemize}
  Then $f$ is a modular unit on $X \left( N \right)$. 
Furthermore, all modular units on $X\left( N \right)$ are, up to scaling, of the above form.
\end{thm}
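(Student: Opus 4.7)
The plan is to prove both directions of the stated equivalence by exploiting the transformation laws in Proposition \ref{trans2}.

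For the forward direction, the goal is to show that any $f$ of the given form is $\Gamma(N)$-invariant. Fix $\gamma = \begin{psmallmatrix} a & b \\ c & d \end{psmallmatrix} \in \Gamma(N)$ and $\mathbf{a} \in \frac{1}{N}\Z^{2} \setminus \Z^{2}$. A direct computation yields $\mathbf{a}\gamma = \mathbf{a} + \mathbf{b}(\mathbf{a}, \gamma)$ with $\mathbf{b}(\mathbf{a}, \gamma) \in \Z^{2}$, integrality being precisely the content of $\gamma \equiv I \mod{N}$. Combining parts (ii) and (iii) of Proposition \ref{trans2} gives
\[
g_{\mathbf{a}}(\gamma \tau) \;=\; \zeta(\gamma)\, \epsilon(\mathbf{a}, \mathbf{b}(\mathbf{a},\gamma))\, g_{\mathbf{a}}(\tau),
\]
and hence
\[
f(\gamma \tau) \;=\; \zeta(\gamma)^{\sum_{\mathbf{a}} e_{\mathbf{a}}} \cdot \prod_{\mathbf{a}} \epsilon(\mathbf{a}, \mathbf{b}(\mathbf{a},\gamma))^{e_{\mathbf{a}}} \cdot f(\tau).
\]
The condition $\sum e_{\mathbf{a}} \equiv 0 \mod{12}$ kills the $\zeta$-factor since $\zeta(\gamma)$ is a $12$th root of unity. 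Expanding $\epsilon$, its exponent becomes a quadratic form in the integers $(Na_{1}, Na_{2})$ whose coefficients involve $(a-1)/N$, $(d-1)/N$, $b/N$, $c/N$. The three quadratic-moment conditions on $(e_{\mathbf{a}})$ are then exactly what is required for the resulting product of roots of unity to be trivial for every $\gamma \in \Gamma(N)$. Since each Siegel function is holomorphic and nowhere vanishing on $\mathbb{H}$, this proves $f$ is a modular unit.

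For the converse, I would invoke the structural fact that the group $\mathcal{F}(N)$ of modular units on $X(N)$ modulo constants is, by Manin--Drinfeld, free abelian of rank one less than the number of cusps. The strategy is to produce enough Siegel products satisfying the congruences to fill out this lattice. Using the order formula of Proposition \ref{ordga} together with the $\SL_{2}(\Z / N\Z)$-action on the set of admissible $\mathbf{a}$, one reads off the divisor of any such product at every cusp of $X(N)$. A dimension count then shows that the lattice of admissible exponent vectors $(e_{\mathbf{a}})$ maps onto a sublattice of $\mathcal{F}(N)$ of rank equal to $\rank \mathcal{F}(N)$. Equality of the two lattices, rather than merely equality up to finite index, follows from the elementary observation that two modular units with identical divisors differ by a constant.

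The main obstacle is the congruence bookkeeping in the forward direction: one must translate the explicit $\Gamma(N)$-transformation of $g_{\mathbf{a}}$ into the symmetric statement that the four moment conditions are jointly sufficient. In practice I would verify invariance under a small generating set for $\Gamma(N)/\{\pm I\}$ and then repackage the resulting linear constraints on $(e_{\mathbf{a}})$ in the quadratic-moment form stated. The hypothesis $\gcd(N,6) = 1$ is essential here: it ensures that $2$ and $3$ are units modulo $N$, so that the mod $12$ condition and the three mod $N$ conditions decouple cleanly and the sign contribution $(-1)^{b_{1} b_{2} + b_{1} + b_{2}}$ from $\epsilon$ can be absorbed into the other constraints.
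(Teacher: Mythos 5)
The paper offers no proof of this statement: it is quoted with attribution from Kubert and Lang \cite[Chapter 4, Theorem 1.3]{KL}, so there is no internal argument to compare yours against. Judged on its own terms, your forward direction is the standard one and is sound in outline: by Proposition \ref{trans2}, for $\gamma \in \Gamma(N)$ one has $\mathbf{a}\gamma = \mathbf{a} + \mathbf{b}(\mathbf{a},\gamma)$ with $\mathbf{b}(\mathbf{a},\gamma) \in \Z^{2}$, so $f(\gamma\tau)/f(\tau)$ is a product of roots of unity, and the four congruences are exactly what is needed to make each exponent vanish. The bookkeeping is genuinely tedious but routine, and this half is not where the theorem's difficulty lies.

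The converse, however, has a real gap. Your argument produces a full-rank sublattice $L \subseteq \divv\left(\mathcal{F}(N)\right)$, namely the divisors of admissible Siegel products, and then claims $L = \divv\left(\mathcal{F}(N)\right)$ because two modular units with the same divisor differ by a constant. That observation only gives injectivity of $\divv$ on units modulo constants; it says nothing about the index of the inclusion $L \subseteq \divv\left(\mathcal{F}(N)\right)$, which a priori is merely finite. What your full-rank argument actually yields is that for every modular unit $u$ some power $u^{m}$ is, up to a constant, an admissible Siegel product. The entire content of the Kubert--Lang theorem is the saturation statement that one can then extract the $m$-th root \emph{within} the group of Siegel units -- that this group is division-closed in $\mathcal{F}(N)$. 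Establishing this requires their analysis of $q$-expansions, distribution relations and the structure of the Siegel unit group, and it is here (in controlling the $2$- and $3$-torsion obstructions to root extraction) that the hypothesis $\gcd(N,6)=1$ does its essential work -- not, as you suggest, in decoupling the mod $12$ condition from the mod $N$ conditions in the forward direction. Without an argument forcing the index to be one, the converse remains unproved.
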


In particular, this results fully describes the modular units on $X(p)$, for prime level $p \ge 5$. As $\Gamma(p) \subset \Gamma_{1}(p)$, modular units on $X_{1}(p)$ are necessarily modular units on $X(p)$, and hence of the above form. In Section \ref{constructions} we determine when such products of Siegel functions are additionally invariant under $\Gamma_{1}(p)/\Gamma(p)$.

\section{Constructing Modular Units on \texorpdfstring{$X_{1}(p)$}{a}} \label{constructions}
The main result of this section gives a necessary and sufficient criteria for certain products of Siegel functions to be modular units on the curve $X_{1}( p)$. In the proceeding section, we  prove that all modular units on $X_{1}\left( p\right)$ are of this form. 

For the rest of this paper we fix an integer $\alpha$ whose reduction modulo $p$ has multiplicative order $p-1$ in $\left( \Z / p \Z \right)^{*}$. All of our constructions use this fixed $\alpha$. We set  $n = \frac{p-1}{2} \in \Z$ and for $0 \le i \le n-1$ define:
\begin{itemize}
    \item $E_{i} ( \tau ) = g_{\left( \frac{\alpha^{i}}{p}, 0 \right)} ( \tau ) \displaystyle \prod_{j=0}^{p-2} g_{\left( \frac{\alpha^{i}}{p}, \frac{\alpha^{j}}{p} \right)}( \tau ); $
    \item $F_{i} ( \tau ) = g_{\left( 0, \frac{\alpha^{i}}{p} \right)} ( \tau )$.
\end{itemize}
The main result of this section is the following.
\begin{prop} 
\label{X1pcongs}
Let $e_{i}, f_{j} \in \Z$ and set 
\begin{equation} \label{fx1p}
    f ( \tau) = \displaystyle \prod_{i=0}^{n-1} E_{i}(\tau)^{e_{i}} F_{i} ( \tau)^{f_{i}}.
 \end{equation}   
 Then $f$ is a modular unit for $\Gamma_{1}( p )$ if and only if the exponents satisfy the following congruence conditions:
\begin{itemize}
    \item $ \displaystyle \sum_{i=0}^{n-1} e_{i} \alpha^{2i}  \equiv 0 \mod{p};$
    \item $\ \displaystyle \sum_{i=0}^{n-1} f_{i} \alpha^{2i}  \equiv 0 \mod{p};$
    \item $ \displaystyle \sum_{i=0}^{n-1} pe_{i} + f_{i} \equiv \ 0 \mod{12}.$
\end{itemize}
\end{prop}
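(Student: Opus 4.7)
The plan is to reduce the question to two independent invariance checks. Since $\Gamma_1(p)$ is generated by $\Gamma(p)$ together with $T=\begin{psmallmatrix}1&1\\0&1\end{psmallmatrix}$ — the coset of $T$ generates $\Gamma_1(p)/\Gamma(p)\cong\Z/p\Z$ — the function $f$ is a modular unit for $\Gamma_1(p)$ if and only if it is a modular unit on $X(p)$ and, in addition, invariant under $T$.

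The first check applies Theorem \ref{KLthm} directly. Expanding $f$ as a product of Siegel functions $g_\mathbf{a}$ with $\mathbf{a}\in\tfrac{1}{p}\Z^2/\Z^2$, the contribution of each $E_i$ to a Kubert-Lang congruence involves an inner sum over $j=0,\dots,p-1$ which produces either a factor of $p$ or a sum $\sum_{j=0}^{p-1}j^k$ with $k=1,2$, both divisible by $p$ for $p\ge 5$. Thus the first and third Kubert-Lang congruences are automatic, while the second and fourth reduce to exactly conditions~(2) and~(3) of the proposition.

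For the second check, I compute $f(T\tau)/f(\tau)$ using Proposition \ref{trans2}. The vector $(0,\alpha^i/p)T=(0,\alpha^i/p)$ is already reduced modulo $\Z^2$, so $F_i(T\tau)=\zeta(T)F_i(\tau)$. Each vector $(\alpha^i/p,j/p)T=(\alpha^i/p,(\alpha^i+j)/p)$ in the definition of $E_i$ must be pulled back to $[0,1)^2$ using Proposition \ref{trans2}(ii); writing $a_i\in\{1,\dots,p-1\}$ for the reduction of $\alpha^i$ mod $p$ and $a_i+j=p\delta_j+j'$ with $0\le j'<p$, the combinatorial identity $\sum_{j=0}^{p-1}\delta_j=a_i$ collapses the $\epsilon$-factors into
\[
E_i(T\tau)=\zeta(T)^{p}(-1)^{a_i}e^{\pi i a_i^{2}/p}E_i(\tau).
\]
Using the standard value $\zeta(T)=e^{\pi i/6}$ (obtained from the eta-quotient expression of $g_\mathbf{a}$), I then obtain
\[
\frac{f(T\tau)}{f(\tau)}=\exp\!\Bigl(\pi i\bigl(\tfrac{D}{6}+S+\tfrac{P}{p}\bigr)\Bigr),
\]
where $D=\sum(pe_i+f_i)$, $S=\sum e_i a_i$, and $P=\sum e_i a_i^{2}$. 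Setting this equal to $1$ and using condition~(3) to eliminate the $D/6$ term (which becomes an even integer) leaves the requirement $S+P/p\in 2\Z$; reduction modulo $p$ forces $P\equiv 0\pmod p$, i.e.\ $\sum e_i\alpha^{2i}\equiv 0\pmod p$, which is precisely condition~(1).

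What remains is to verify that the residual parity $S+P/p\equiv 0\pmod 2$ is automatic given conditions~(1)–(3). Since $a_i(a_i-1)$ is a product of consecutive integers it is even, so $a_i^{2}\equiv a_i\pmod 2$ and hence $P\equiv S\pmod 2$; because $p$ is odd, $P=p\cdot(P/p)$ forces $P/p\equiv P\equiv S\pmod 2$, so $S+P/p\equiv 2S\equiv 0\pmod 2$. This completes both directions. The main obstacle is the careful $\epsilon$-factor bookkeeping in the $T$-invariance computation — the combinatorial identity $\sum_j\delta_j=a_i$ is the crucial lemma that collapses the product of reduction factors into a single clean scalar — together with the small but essential parity observation, without which the criterion would appear to carry a fourth mod-$2$ congruence that is in fact forced by the other three.
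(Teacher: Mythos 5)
Your proof is correct and follows essentially the same route as the paper: Kubert--Lang handles $\Gamma(p)$-invariance (with the inner sums over second coordinates of the $E_i$ vanishing mod $p$), and the unipotent transformation of $E_i$ is computed by the same $\epsilon$-factor bookkeeping (your identity $\sum_j\delta_j=a_i$ is the paper's $\sum_j n_j=\alpha^{i+k}$ in Proposition \ref{transFE}) together with the same parity observation $a_i^2\equiv a_i\pmod 2$. The only streamlining is that you test the single generator $T$ of $\Gamma_1(p)/\Gamma(p)\cong\Z/p\Z$ where the paper tests all $p-1$ representatives $\gamma_k$ of Lemma \ref{cosetG1pGp}, and you obtain the converse from the Kubert--Lang classification plus $T$-invariance rather than by evaluating $f$ on the three specific matrices the paper uses.
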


We first note that in the notation of Theorem \ref{KLthm}, for any $f$ as \eqref{fx1p}:
\begin{itemize}
    \item $\displaystyle \sum_{\mathbf{a}} e_{\mathbf{a}} \left( a_{1}N \right)^{2} = \displaystyle \sum_{i=0}^{n-1} p e_{i} \alpha^{2i} \ \equiv \ 0 \ \text{mod} \ p,$
     \item $\displaystyle \sum_{\mathbf{a}} e_{\mathbf{a}} \left( a_{2}N \right)^{2} =  \displaystyle \sum_{i=0}^{n-1} e_{i} \sum_{j=0}^{p-2} \alpha^{2j} + \displaystyle \sum_{i=0}^{n-1}  f_{i} \alpha^{2i} \equiv \ \displaystyle \sum_{i=0}^{n-1}  f_{i} \alpha^{2i} \ \equiv \ 0 \ \text{mod} \ p,$    
      \item $\displaystyle \sum_{\mathbf{a}} e_{\mathbf{a}} \left( a_{1} N \right) \left( a_{2}N \right) =  \displaystyle \sum_{i=0}^{n-1} e_{i} \sum_{j=0}^{p-2} \alpha^{i + j}  \ \equiv \ 0 \ \text{mod} \ p,$        
  \item $\displaystyle \sum_{\mathbf{a}} e_{\mathbf{a}} =  \displaystyle \sum_{i=0}^{n-1} pe_{i} + f_{i} \ \equiv \ 0 \ \text{mod} \  12,$
  \end{itemize}
and hence $f$ is a modular unit for $\Gamma ( p)$. Therefore, to prove that such an $f$ is a modular unit for $\Gamma_{1} ( p)$ it suffices to show
\begin{center}
$f ( \gamma \tau) = f ( \tau)$
\end{center}
where $\gamma$ runs through any set of coset representatives of $\Gamma_{1}( p) / \Gamma ( p)$. Our argument uses the following preliminary results; the first of which fixes a good choice of representatives with our notation, and the second proves transformation formulae for $E_{i}$ and $F_{j}$ under these representatives.
\begin{lem} \label{cosetG1pGp}
Let $\gamma_{k} = \begin{psmallmatrix}
    1 & \alpha^{k} \\ 0 & 1 
\end{psmallmatrix}$  for $k = 0, \ldots p-2$. Then $ \{ \gamma_{0}, \ldots, \gamma_{p-2} ,I_{2 \times 2} \}$ is a complete set of coset representaives for $\Gamma_{1} ( p) / \Gamma ( p)$.
\end{lem}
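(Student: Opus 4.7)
The plan is to show that the listed $p$ matrices lie in $\Gamma_1(p)$, are pairwise inequivalent modulo $\Gamma(p)$, and exhaust the index $[\Gamma_1(p):\Gamma(p)]$, which equals $p$.

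First I would verify that $[\Gamma_1(p):\Gamma(p)] = p$. The quotient $\Gamma_1(p)/\Gamma(p)$ is naturally identified with the image of $\Gamma_1(p)$ inside $\SL_2(\Z/p\Z)$, namely the subgroup of upper unitriangular matrices $\begin{psmallmatrix} 1 & t \\ 0 & 1 \end{psmallmatrix}$ with $t \in \Z/p\Z$. This subgroup has order $p$. Since the proposed list contains exactly $p$ elements (the $p-1$ matrices $\gamma_0, \ldots, \gamma_{p-2}$ together with $I_{2\times 2}$), once we show they represent distinct cosets we are done by cardinality.

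Next I would check that each $\gamma_k$ (and $I_{2\times 2}$) is in $\Gamma_1(p)$: this is immediate since each such matrix has diagonal $(1,1)$ and lower-left entry $0$. The key computation is to verify pairwise distinctness of the cosets. For $0 \le k < l \le p-2$, we have
\[
\gamma_k \gamma_l^{-1} = \begin{psmallmatrix} 1 & \alpha^k \\ 0 & 1 \end{psmallmatrix}\begin{psmallmatrix} 1 & -\alpha^l \\ 0 & 1 \end{psmallmatrix} = \begin{psmallmatrix} 1 & \alpha^k - \alpha^l \\ 0 & 1 \end{psmallmatrix}.
\]
This lies in $\Gamma(p)$ if and only if $\alpha^k \equiv \alpha^l \pmod p$. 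Since $\alpha$ has multiplicative order $p-1$ in $(\Z/p\Z)^\times$ and $0 \le k, l \le p-2$, this forces $k = l$, a contradiction. Similarly $\gamma_k \notin \Gamma(p)$ since its upper-right entry $\alpha^k$ is a nonzero residue modulo $p$.

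There is no serious obstacle here; the only subtlety is recalling why the index equals $p$. Combining the three observations (membership in $\Gamma_1(p)$, pairwise distinctness of cosets, matching the index) proves the claim.
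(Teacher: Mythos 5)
Your proof is correct, and it takes a mildly different (dual) route from the paper's. The paper argues exhaustiveness directly: given $\gamma = \begin{psmallmatrix} a & b \\ c & d \end{psmallmatrix} \in \Gamma_{1}(p)$, either $p \mid b$ and $\gamma \in \Gamma(p)$, or $b \equiv \alpha^{k} \pmod{p}$ for a (unique) $0 \le k \le p-2$ because $\alpha$ is a primitive root, and a one-line computation using $a \equiv d \equiv 1$, $c \equiv 0 \pmod p$ gives $\gamma\gamma_{k}^{-1} \in \Gamma(p)$; pairwise distinctness of the listed cosets is left implicit there. You instead prove injectivity — the $\gamma_{k}$ represent pairwise distinct cosets, again because $\alpha$ has order $p-1$ — and then conclude by matching the count against the index $[\Gamma_{1}(p):\Gamma(p)] = p$. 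The two arguments are two sides of the same coin: the list $\alpha^{0},\dots,\alpha^{p-2}$ of $p-1$ residues in $(\Z/p\Z)^{\times}$ is injective if and only if it is surjective, so both proofs ultimately rest on the same property of $\alpha$. The only point your write-up glosses over slightly is why the image of $\Gamma_{1}(p)$ in $\SL_{2}(\Z/p\Z)$ is the \emph{full} unipotent upper-triangular subgroup of order $p$ rather than a proper subgroup; this is immediate since the integer translations $\begin{psmallmatrix} 1 & t \\ 0 & 1 \end{psmallmatrix}$ for $t = 0,\dots,p-1$ lie in $\Gamma_{1}(p)$ and already hit every element of that subgroup. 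With that remark included, your argument is complete.
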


\begin{proof}
For $\gamma = \begin{psmallmatrix}
    a & b \\ c & d
\end{psmallmatrix}\in \Gamma_{1}( p )$  with $b$ coprime to $p$, $b  \equiv \alpha^{k} \mod{p}$ for some $0 \le k \le p-2$, and one shows that $\gamma \gamma_{k}^{-1} \in \Gamma ( p )$.
\end{proof}

\begin{prop} \label{transFE}
Let $\gamma = \gamma_{k}$ for some $k \in \{ 0, \ldots, p-2 \}$ be as in Lemma \ref{cosetG1pGp}. For any $i \in \{ 0, \ldots n-1 \}$
\begin{itemize}
    \item $ E_{i}( \gamma \tau ) = \zeta \left( \gamma \right)^{p} (-1)^{\alpha^{i+k}}\exp \left(  \frac{\pi i \alpha^{2i+k}}{p}\right) \ E_{i}( \tau ), $
    \item $ F_{i}( \gamma \tau ) = \zeta \left( \gamma \right) F_{i} ( \tau ),$
\end{itemize}
where $\zeta \left( \gamma \right)$ is a $12$th root of unity, which depends on $\gamma$ only (c.f Proposition \ref{trans2}).
\end{prop}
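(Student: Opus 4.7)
The plan is to treat the two statements separately, in both cases reducing to repeated applications of Proposition \ref{trans2}. Since $\gamma_k = \begin{psmallmatrix} 1 & \alpha^k \\ 0 & 1 \end{psmallmatrix}$ acts on a row vector $(a_1, a_2)$ by right multiplication, sending it to $(a_1,\, a_1\alpha^k + a_2)$, the case $F_i$ is immediate: the vector $(0, \alpha^i/p)$ is fixed under this action, so part (iii) of Proposition \ref{trans2} yields $F_i(\gamma_k\tau) = \zeta(\gamma_k)\, g_{(0, \alpha^i/p)}(\tau) = \zeta(\gamma_k)\, F_i(\tau)$ with no further bookkeeping.

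For $E_i$, I would first apply Proposition \ref{trans2}(iii) to each of the $p$ Siegel function factors to obtain
\[
E_i(\gamma_k\tau) \;=\; \zeta(\gamma_k)^p\, g_{(\alpha^i/p,\, \alpha^{i+k}/p)}(\tau)\prod_{j=0}^{p-2} g_{(\alpha^i/p,\, (\alpha^{i+k}+\alpha^j)/p)}(\tau),
\]
then use Proposition \ref{trans2}(ii) to normalise each second coordinate so it lies in $\{0,\, 1/p,\, \ldots, (p-1)/p\}$: writing a given second coordinate as $s/p + m$ with $s \in \{0, \ldots, p-1\}$ and $m \in \Z$ contributes a phase $(-1)^m \exp(\pi i m \alpha^i/p)$. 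The crucial combinatorial observation is that, as $j$ varies, the residues $(\alpha^{i+k} + \alpha^j) \bmod p$ range over $\Z/p\Z \setminus \{\alpha^{i+k} \bmod p\}$, so together with the residue $\alpha^{i+k} \bmod p$ from the leading factor they cover $\Z/p\Z$ exactly once. The same is true of the normalised second coordinates in the original $E_i(\tau)$ (with $0$ contributed by $g_{(\alpha^i/p, 0)}$). After normalisation both sides therefore reduce to the common product $\prod_{s=0}^{p-1} g_{(\alpha^i/p, s/p)}(\tau)$, which cancels in the ratio.

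The substantive part of the proof, and the main obstacle, is tracking the accumulated phase. Let $M$ be the difference between the sum of integer parts $m$ appearing in the normalisation of $E_i(\gamma_k\tau)$ and those appearing in $E_i(\tau)$; then the total correction equals $(-1)^M \exp(\pi i \alpha^i M/p)$. A direct calculation, in which the two sums of residues telescope because each set $\{0, 1, \ldots, p-1\}$ has the same sum $p(p-1)/2$, gives
\[
pM \;=\; p\alpha^{i+k} - \sum_{s=0}^{p-1} s + \sum_{s=1}^{p-1} s \;=\; p\alpha^{i+k},
\]
hence $M = \alpha^{i+k}$. Substituting back yields $E_i(\gamma_k\tau) = \zeta(\gamma_k)^p (-1)^{\alpha^{i+k}} \exp(\pi i \alpha^{2i+k}/p)\, E_i(\tau)$, as claimed. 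The only delicate step is keeping the sign and exponential conventions consistent between the two normalisations; once the two sums of residues are seen to cancel, the identity drops out.
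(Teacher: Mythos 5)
Your argument is correct and is essentially the paper's proof: both apply Proposition \ref{trans2}(iii) factor by factor, use part (ii) to shift the second coordinates back into a set of representatives in bijection with $\Z/p\Z$, and evaluate the accumulated integer shift by a telescoping sum to obtain $\alpha^{i+k}$, whence the phase $(-1)^{\alpha^{i+k}}\exp(\pi i \alpha^{2i+k}/p)$. The only cosmetic difference is the choice of representatives: the paper normalises to $\{\alpha^{m}/p\}$ with $\alpha^{\infty}:=0$ (so the resulting product is literally $E_{i}(\tau)$ and only one normalisation is needed), whereas you normalise both $E_{i}(\gamma\tau)$ and $E_{i}(\tau)$ to $\{s/p : 0 \le s \le p-1\}$ and take the difference $M$ of the integer parts.
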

\begin{proof}
The result is immediate for $F_{i}$ by Proposition \ref{trans2}. From the definition of $E_{i}$ and Proposition \ref{trans2} we obtain 
\begin{align*}
  E_{i} ( \gamma \tau ) & = \zeta \left( \gamma \right)^{p} g_{( \frac{\alpha^{i}}{p}, 0 ) \gamma } ( \tau ) \displaystyle \prod_{j=0}^{p-2}  g_{( \frac{\alpha^{i}}{p}, \frac{\alpha^{j}}{p} ) \gamma } ( \tau ) \\
    & = \zeta \left( \gamma \right)^{p} g_{( \frac{\alpha^{i}}{p}, \frac{\alpha^{i+k}}{p} )} ( \tau ) \displaystyle \prod_{j=0}^{p-2}  g_{( \frac{\alpha^{i}}{p}, \frac{ \alpha^{i+k} + \alpha^{j}}{p})} ( \tau )
    \end{align*}
    where $\zeta \left( \gamma \right)$ is as before. We set  $\alpha^{\infty} := 0$ and note that for all $j \in \{ 0, \ldots, p-2 \} \cup \{ \infty \}$ we have 
\begin{center}
    $\alpha^{i+k} + \alpha^{j} =  \alpha^{m_{j}} + n_{j}p $
\end{center}
for some unique $m_{j} \in  \{ 0, \ldots, p-2 \} \cup \{ \infty \}$ and $n_{j} \in \Z $. Combining this with Proposition \ref{trans2} we obtain 
\begin{align*}
g_{( \frac{\alpha^{i}}{p}, \frac{\alpha^{i+k} + \alpha^{j}}{p}))} ( \tau ) & =  g_{( \frac{\alpha^{i}}{p}, \frac{\alpha^{m_{j}}}{p} ) + ( 0, n_{j})} ( \tau ) \\
& =  (-1)^{n_{j}} \exp \left( \pi i n_{j} \frac{\alpha^{i}}{p} \right)  g_{( \frac{\alpha^{i}}{p}, \frac{\alpha^{m_{j}}}{p} )} ( \tau ).
\end{align*}
Hence substituting this into the above expression, we obtain 
\begin{center}
   $E_{i} \left( \gamma \tau \right) = \zeta \left( \gamma \right)^{p} \left( -1 \right)^{\sum_{j} n_{j}} \exp \left( \pi i \frac{\alpha^{i}}{p} \sum_{j}n_{j} \right) \displaystyle \prod_{j} g_{( \frac{\alpha^{i}}{p}, \frac{\alpha^{m_{j}}}{p})} ( \tau )$. 
\end{center}
Note that the function $j \mapsto m_{j}$ defines a bijection on 
$ J =   \{ 0, \ldots, p-2 \} \cup \{ \infty \}$ and hence
\begin{center}
   $E_{i} \left( \gamma \tau \right) = \zeta \left( \gamma \right)^{p} \left( -1 \right)^{\sum_{j} n_{j}} \exp \left( \pi i \frac{\alpha^{i}}{p} \sum_{j}n_{j} \right) E_{i} ( \tau )$. 
\end{center}
Thus we conclude 
\begin{align*}
     \displaystyle \sum_{j \in J  } n_{j}   = \displaystyle \sum_{j \in J } \frac{1}{p} \left( \alpha^{i+k}  + \alpha^{j} - \alpha^{m_{j}} \right) 
     = \displaystyle \sum_{j \in J } \frac{\alpha^{i+k}} {p} = \alpha^{i+k}
      \end{align*}
 and the claim follows.     
\end{proof}

\begin{proof}[Proof of Proposition \ref{X1pcongs}]
As previously observed, to prove the forward direction, it remains to show that $ f ( \gamma \tau ) = f ( \tau )$ for any $\gamma = \gamma_{k}$ as in Lemma \ref{cosetG1pGp}. It follows from Proposition \ref{transFE} that 
    \begin{center}
       $f ( \gamma \tau ) = \zeta \left( \gamma \right)^{A} (-1)^{B} e^{\pi i C} f( \tau ) $ 
    \end{center}
    where $A = \displaystyle \sum_{i=0}^{n-1} pe_{i} + f_{i}$, $B = \alpha^{k} \displaystyle \sum_{i=0}^{n-1} \alpha^{i} e_{i}$ and $C = \frac{\alpha^{k}}{p} \displaystyle \sum_{i=0}^{n-1} \alpha^{2i} e_{i}$.
    
    As $A \equiv \ 0 \ \text{mod} \ 12 $ and $\zeta \left( \gamma \right)$ is a $12$th root of unity, $\zeta \left( \gamma \right)^{A} =1$. By hypothesis $ \displaystyle \sum_{i=0}^{n-1} \alpha^{2i} e_{i} \ \equiv \ 0 \  \text{mod} \ p $ so $C \in \Z$. Lastly, since $p$ is odd, $B \equiv C \ \text{mod} \ 2 $, and hence $(-1)^{B}  = e^{\pi i C}   \in \{ \pm 1 \}$. 

On the contrary, suppose that such an $f$ is a modular unit on $X_{1}\left( p\right)$. Then $f ( \gamma \tau ) = f ( \tau )$ for $\gamma = \begin{psmallmatrix}
     1 & p \\ 0 & 1 
\end{psmallmatrix} \in \Gamma_{1} ( p)$. From Lemma \ref{trans2}, we note that 
$f \left( \gamma \tau \right) = \zeta \left( \gamma \right)^{\sum pe_{i} + f_{i}} f ( \tau )$ and since $\zeta\left( \gamma \right)$ is a $12$th root of unity, we get 
\begin{center}
    $\displaystyle \sum_{i=0}^{n-1} p e_{i} + f_{i} \equiv 0 \mod{12}$.
\end{center}
Take $\gamma = \begin{psmallmatrix}
    1 & 0 \\ p \alpha & 1 
\end{psmallmatrix} \in \Gamma_{1}(p)$, then:
\begin{center}
$f( \gamma \tau ) = 
\zeta ( \tau )^{\sum pe_{i} + f_{i}} e^{-i \pi \alpha \sum f_{i} \frac{\alpha^{2i}}{p}}f (\tau)$
\end{center}
and this implies the second stated congruence. Finally, taking $\gamma = \begin{psmallmatrix}
     1 & 1 \\ 0 & 1 
\end{psmallmatrix}$ gives the first congruence.
\end{proof}

We spend the remained of this section discussing divisors of modular units of the form \eqref{fx1p}. Recall from Proposition \ref{cuspsX1p} that $X_{1}(p)$ has $p-1$ cusps, and we define explicit representatives of these using our fixed $\alpha$ as follows:
\begin{itemize}
    \item for $0 \le i \le n-1$ let $P_{i}$ be the cusps represented by $\begin{psmallmatrix}
        \alpha^{i} \\ p 
    \end{psmallmatrix}$, or equivalently by $\frac{\alpha^{i}}{p} \in \Q \cup \{ i \infty \}$;
\item for $0 \le i \le n-1$ let $Q_{i}$ be the cusps represented by $\begin{psmallmatrix} p \\
        \alpha^{i} 
    \end{psmallmatrix}$, or equivalently by $\frac{p}{\alpha^{i}} \in \Q \cup \{ i \infty \}$.
\end{itemize}
Note that $P_{0}, \ldots ,P_{n-1} \in X_{1}( p)(\Q )$ and $Q_{0}, \ldots, Q_{n-1}$ are defined over $\Q ( \zeta_{p})^{+}$, the maximal real subfield of the cyclotomic extension $\Q ( \zeta_{p})$, and form one orbit under the Galois action. Moreover $P_{0}, \ldots, P_{n-1}$ lie over the cusp represented by $i \infty$ on $X_{0}(p)$ and $Q_{0}, \ldots, Q_{n-1}$ lie over the cusp represented by $0$ on $X_{0}(p)$.

In what follows we write $(a_{1}, a_{2})_{1} := a_{1}$ and $\{ x \} = x - \floor{x}$ for the fractional part of $x \in \R$.
\begin{lem}
Suppose that $g (\tau ) = \displaystyle \prod_{\mathbf{a} \in I} g_{\mathbf{a}}(\tau )^{b_{\mathbf{a}}}$ is a modular unit on $X_{1}(p)$. Then 
\begin{center}
    $ \divv (g) =  \displaystyle \sum_{c} \frac{w_{c}}{2} \sum_{\mathbf{a} \in I} B_{2}(\{ (\mathbf{a}\gamma_{c})_{1}\})$ 
\end{center}
where the first sum is over the cusps of $X_{1}(p)$, $w_{c}$ is the width of the cusp $c$ and $\gamma_{c} \in \SL_{2}(\Z)$ is such that $\gamma_{c} \cdot i \infty = c$.  
\end{lem}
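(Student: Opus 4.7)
The plan is to compute $\divv(g)$ cusp-by-cusp by exploiting the fact that Siegel functions transform very cleanly under $\SL_2(\Z)$ and have an explicit $q$-expansion at $i\infty$. Concretely, for each cusp $c$ of $X_1(p)$ I would fix a matrix $\gamma_c \in \SL_2(\Z)$ with $\gamma_c \cdot i\infty = c$. The width $w_c$ is, by definition, the smallest positive integer with $\gamma_c T^{w_c} \gamma_c^{-1} \in \pm \Gamma_1(p)$, and the local uniformizer on $X_1(p)$ at $c$ pulls back, via $\tau \mapsto \gamma_c \tau$, to $q_{w_c} := e^{2\pi i \tau / w_c}$ in a neighbourhood of $i\infty$. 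Consequently the order of vanishing of $g$ at $c$ equals the order of $g(\gamma_c \tau)$ as a power series in $q_{w_c}$.

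Next, I would use part (iii) of Proposition \ref{trans2} to move the $\gamma_c$ inside each Siegel factor:
\[
g(\gamma_c \tau) \;=\; \prod_{\mathbf{a} \in I} g_{\mathbf{a}}(\gamma_c \tau)^{b_{\mathbf{a}}} \;=\; \zeta(\gamma_c)^{\sum_{\mathbf{a}} b_{\mathbf{a}}} \prod_{\mathbf{a} \in I} g_{\mathbf{a}\gamma_c}(\tau)^{b_{\mathbf{a}}}.
\]
The scalar $\zeta(\gamma_c)$ is a $12$th root of unity and contributes nothing to the order of vanishing. Applying Proposition \ref{ordga} to each factor gives
\[
\ord_q\bigl(g(\gamma_c \tau)\bigr) \;=\; \sum_{\mathbf{a} \in I} b_{\mathbf{a}} \cdot \tfrac{1}{2} B_2\bigl(\{(\mathbf{a}\gamma_c)_1\}\bigr).
\]

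Finally, converting from the $q$-order to the order with respect to the local parameter $q_{w_c}$ is just multiplication by $w_c$, since $q = q_{w_c}^{w_c}$. (The invariance of $g$ under the stabiliser of $c$ in $\Gamma_1(p)$ guarantees that $g \circ \gamma_c$ really does have a $q_{w_c}$-expansion, so no rational powers of $q$ arise.) Summing the resulting contribution $\frac{w_c}{2} \sum_{\mathbf{a}} b_{\mathbf{a}} B_2(\{(\mathbf{a}\gamma_c)_1\})$ over all cusps $c$ yields the claimed formula for $\divv(g)$.

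The routine mechanics are all provided by Propositions \ref{trans2} and \ref{ordga}; the only point that deserves care is the width conversion, i.e.\ correctly identifying $w_c$ as the ramification factor between $q$ at $i\infty$ and the local uniformizer at $c$ on $X_1(p)$, and checking that the chosen $\gamma_c$ indeed makes $g(\gamma_c \tau)$ invariant under $\tau \mapsto \tau + w_c$ so that the $q_{w_c}$-expansion is well-defined. Once this is set up, the formula drops out immediately and the lemma will then feed into the explicit divisor computations for the $E_i$ and $F_j$ in Proposition \ref{ordsvansX1}.
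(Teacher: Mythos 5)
Your argument is correct and is exactly the paper's intended proof: the paper's own justification is the one-line remark that the lemma ``follows directly from Propositions \ref{trans2} and \ref{ordga},'' and your write-up simply makes explicit the transformation $g(\gamma_c\tau)=\zeta(\gamma_c)^{\sum b_{\mathbf{a}}}\prod g_{\mathbf{a}\gamma_c}(\tau)^{b_{\mathbf{a}}}$, the $q$-order from Proposition \ref{ordga}, and the width factor relating $q$ to the local uniformizer $q_{w_c}$ at the cusp. Nothing further is needed.
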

\begin{proof}
 This follows directly from Propositions \ref{trans2} and \ref{ordga}.   
\end{proof}
 
\begin{prop} \label{ordsvansX1}
Let $f( \tau ) = \displaystyle \prod_{i=0}^{s-1} E_{i} ( \tau )^{e_{i}} F_{i} ( \tau )^{f_{i}}$  be a modular unit on $X_{1} \left( p\right)$. Then for all $0 \le i \le n-1$:  
\begin{itemize}
    \item $ \text{ord}_{P_{i}} f   =  \displaystyle \sum_{j=0}^{n-1} e_{j} \frac{p}{2} B_{2} \left( \left \{ \frac{\alpha^{i+j}}{p} \right \} \right) + \sum_{j=0}^{n-1} \frac{f_{j}}{12};$ 
    \item  $ \text{ord}_{Q_{i}} f   =  \displaystyle \sum_{j=0}^{n-1} f_{j} \frac{p}{2} B_{2} \left( \left \{ \frac{\alpha^{i+j}}{p} \right \} \right) + \sum_{j=0}^{n-1} \frac{e_{j}}{12}$.
    \end{itemize}
\end{prop}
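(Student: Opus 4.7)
The plan is to compute the order of $f$ at a cusp $c$ of $X_{1}(p)$ by fixing some $\sigma_{c} \in \SL_{2}(\Z)$ with $\sigma_{c}(i\infty) = c$, determining the width $h_{c}$ of $c$, transforming each Siegel factor via Proposition \ref{trans2}(iii), and then reading off the $q$-order via Proposition \ref{ordga}. This gives the master formula
\[
  \ord_{c}(f) \;=\; h_{c} \displaystyle \sum_{\mathbf{a}} b_{\mathbf{a}} \cdot \tfrac{1}{2} B_{2}\bigl(\{(\mathbf{a}\sigma_{c})_{1}\}\bigr),
\]
where $\mathbf{a}$ ranges over the Siegel factors of $f$ with exponents $b_{\mathbf{a}}$; the $12$th roots of unity introduced by Proposition \ref{trans2}(iii) affect only the leading constant, so they are invisible to the order.

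For the cusps in question I take
\[
  \sigma_{P_{i}} \;=\; \begin{psmallmatrix} \alpha^{i} & x \\ p & y \end{psmallmatrix}, \qquad \sigma_{Q_{i}} \;=\; \begin{psmallmatrix} p & x' \\ \alpha^{i} & y' \end{psmallmatrix},
\]
with $x, y, x', y' \in \Z$ chosen so that both determinants equal $1$. A direct computation of $\sigma_{c} T^{h} \sigma_{c}^{-1}$ and reduction modulo $p$, in the spirit of the proof of Lemma \ref{cosetG1pGp}, then gives $h_{P_{i}} = 1$ and $h_{Q_{i}} = p$; the latter is forced because the bottom-left entry of $\sigma_{Q_{i}} T^{h} \sigma_{Q_{i}}^{-1}$ equals $-\alpha^{2i} h$, which lies in $p\Z$ if and only if $p \mid h$.

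Next I compute $\mathbf{a}\sigma_{c}$ for each Siegel factor occurring in $E_{j}$ and $F_{j}$. A straightforward matrix multiplication shows that at $P_{i}$ every Siegel factor appearing in $E_{j}$, whether $g_{(\alpha^{j}/p, 0)}$ or $g_{(\alpha^{j}/p, \alpha^{k}/p)}$, has first transformed coordinate with fractional part $\{\alpha^{i+j}/p\}$, while $F_{j}$ transforms to a Siegel function whose first coordinate is the integer $\alpha^{j}$. At $Q_{i}$ the roles are partly swapped: $F_{j}$ transforms to a function with first coordinate fractional part $\{\alpha^{i+j}/p\}$, the factor $g_{(\alpha^{j}/p, 0)}$ produces integer first coordinate $\alpha^{j}$, and the factors $g_{(\alpha^{j}/p, \alpha^{k}/p)}$ produce first coordinates with fractional parts $\{\alpha^{i+k}/p\}$ as $k$ ranges over $\{0, \ldots, p-2\}$.

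Inserting these into the master formula, together with $h_{P_{i}} = 1$, yields the $P_{i}$ identity at once: $E_{j}$ contributes $p$ equal Siegel factors and hence $e_{j} \cdot \frac{p}{2} B_{2}(\{\alpha^{i+j}/p\})$, while $F_{j}$ contributes $f_{j} \cdot \frac{1}{2} B_{2}(0) = f_{j}/12$. The $Q_{i}$ case is where the only genuine subtlety appears. With $h_{Q_{i}} = p$, the $F_{j}$ term becomes $f_{j} \cdot \frac{p}{2} B_{2}(\{\alpha^{i+j}/p\})$ as desired, while $E_{j}$ contributes $e_{j} \cdot p \cdot \bigl( \frac{1}{2} B_{2}(0) + \sum_{k=0}^{p-2} \frac{1}{2} B_{2}(\{\alpha^{i+k}/p\}) \bigr)$. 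Since $\alpha$ generates $(\Z/p\Z)^{\times}$, the residues $\alpha^{i+k} \bmod p$ range exactly over $1, 2, \ldots, p-1$, so the classical identity $\sum_{a=0}^{p-1} B_{2}(a/p) = \tfrac{1}{6p}$ gives $\sum_{k=0}^{p-2} B_{2}(\{\alpha^{i+k}/p\}) = \tfrac{1-p}{6p}$. Plugging this in, the $E_{j}$ contribution collapses to $e_{j} \cdot p \cdot \bigl(\frac{1}{12} + \frac{1-p}{12p}\bigr) = e_{j}/12$, which is exactly the claimed term. This Bernoulli sum is the only nontrivial step in the proof.
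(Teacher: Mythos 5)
Your argument is correct and follows the same route as the paper: the paper likewise reduces to the divisor formula $\divv(g)=\sum_{c}\frac{w_{c}}{2}\sum_{\mathbf{a}}b_{\mathbf{a}}B_{2}(\{(\mathbf{a}\gamma_{c})_{1}\})$, records the widths $w_{P_i}=1$, $w_{Q_i}=p$ and the matrices $\gamma_{P_i}=\begin{psmallmatrix}\alpha^i & a_i\\ p& b_i\end{psmallmatrix}$, $\gamma_{Q_i}=\begin{psmallmatrix}p & -b_i\\ \alpha^i& -a_i\end{psmallmatrix}$, and leaves the rest to the reader. You have simply written out the details the paper omits, including the Bernoulli distribution relation that collapses the $E_j$-contribution at $Q_i$ to $e_j/12$, and all of these computations check out.
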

\begin{proof}
The cusps $P_{i}$ all have width $1$, whilst all $Q_{j}$ have width $p$. We further note that for all $0 \le i \le n-1$, there exist integers $a_{i}, b_{i} $ with $\gamma_{P_{i}} = \begin{psmallmatrix}
    \alpha^{i} & a_{i} \\
    p & b_{i}
\end{psmallmatrix}$  and $\gamma_{Q_{i}} = \begin{psmallmatrix}
    p & -b_{i} \\ \alpha^{i} & -a_{i}
\end{psmallmatrix} $. The results follows from the previous lemma. 
\end{proof}

\section{Proof of Theorem \ref{basisx1p2}} \label{basissec}
In this section we use the criteria proved Proposition \ref{X1pcongs} to construct a basis for the group of modular units on $X_{1}(p)$. We write $\cF( p)$ for the group of modular units for $\Gamma_{1}( p)$ and $\Div_{c} (p)$ for the group of degree $0$ divisors on $X_{1}( p)$ supported on cusps. As $X_{1}( p)$ has $p-1$ cusps, $\Div_{c}(p)$ is a free abelian group of rank $p-2$. The homomorphism 
\begin{center}
    $\divv : \cF( p ) \longrightarrow \Div_{c}(p)$
\end{center}
is an injection when restricted to $\cF( p)$ modulo constants, and hence $\cF(p) / \mathbb{C}$ is a free abelian group of rank at most $p-2$. This upper bound is in fact attained, which we prove in Theorem \ref{basisx1p2}.

The \textit{cuspidal group of} $X_{1}( p)$ is the quotient group 
\begin{center}
    $\calC_{1}( p) = \Div_{c}( p) / \langle \divv (f)  : f \in \cF(p) \rangle$.
\end{center}
This is a finite subgroup of $J_{1} ( p)$, the Jacobian of $X_{1} ( p)$, due to the theorems of Manin \cite{Manin} and Drinfeld \cite{Drinfeld}. The order of this group, denoted by $h_{1}( p)$, if often referred to as the \textit{cuspidal class number} of $X_{1} \left( p\right)$. This is known in our case     due to works of Takagi \cite{takagi1992cuspidal} and Yu \cite{yu1980cuspidal}.

\begin{thm} \label{cuspidalclassnumberX1p}
The cuspidal class number $h_{1} ( p)$ of $X_{1} ( p )$ is  
\begin{center}
    $ h_{1} ( p ) =  \left( p \displaystyle \prod_{\chi} \frac{1}{4} B_{2, \chi} \right)^{2} $
\end{center}
where the product runs over all even, non-principal, Dirichlet characters modulo $p$, $B_{2,\chi} = p \displaystyle \sum_{a =1}^{p} \chi \left( a \right) B_{2} \left( \frac{a}{p} \right)$, and $B_{2} \left( x \right) = x^{2} - x + \frac{1}{6}$ is the second Bernoulli polynomial.
\end{thm}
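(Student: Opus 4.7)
The plan, following Yu and Takagi, is to reduce the computation of $h_{1}(p)$ to a determinant calculation that is diagonalisable via the character theory of the diamond operators. Let $T = (\Z/p\Z)^{\times}/\{\pm 1\}$. This group acts on $X_{1}(p)$ by diamond operators, and each of the two cusp orbits $\{P_{i}\}_{i=0}^{n-1}$ and $\{Q_{j}\}_{j=0}^{n-1}$ is a torsor under $T$. Consequently, after tensoring with $\overline{\Q}$, $\Div_{c}(p)$ decomposes as a $T$-representation into the trivial character (of dimension one, once the degree-zero condition is imposed) together with one two-dimensional eigenspace for each non-trivial character of $T$. The non-trivial characters of $T$ are precisely the non-trivial even Dirichlet characters $\chi$ modulo $p$, of which there are $n-1 = (p-3)/2$.

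For each such $\chi$, I would form the $\chi$-twisted Siegel products
\[
\widetilde{E}_{\chi}(\tau) = \prod_{i=0}^{n-1} E_{i}(\tau)^{\chi(\alpha^{i})} \qquad \text{and} \qquad \widetilde{F}_{\chi}(\tau) = \prod_{i=0}^{n-1} F_{i}(\tau)^{\chi(\alpha^{i})},
\]
where the exponents are interpreted after clearing denominators to lie in $\Z[\zeta_{p-1}]$ and the congruences of Proposition \ref{X1pcongs} are verified using that $\chi$ is non-trivial and even. By Proposition \ref{ordsvansX1}, the order of $\widetilde{E}_{\chi}$ at $P_{j}$ equals
\[
\sum_{i=0}^{n-1} \chi(\alpha^{i}) \cdot \tfrac{p}{2} B_{2}(\{\alpha^{i+j}/p\}) \; = \; \tfrac{1}{2} \overline{\chi(\alpha^{j})} \cdot B_{2,\chi},
\]
after a change of summation index and using that $\chi$ is even to promote the half-sum to a full sum over $(\Z/p\Z)^{\times}$; analogous identities hold for $\widetilde{F}_{\chi}$ at $Q_{j}$, while $\widetilde{E}_{\chi}$ vanishes on the $Q$-orbit and $\widetilde{F}_{\chi}$ on the $P$-orbit. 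On the $\chi$-eigenspace the divisor map is therefore a diagonal $2 \times 2$ matrix both of whose diagonal entries, after the appropriate normalisations, equal $\tfrac{1}{4} B_{2,\chi}$, contributing a factor $(\tfrac{1}{4} B_{2,\chi})^{2}$ to the total index. Multiplying over all non-trivial even $\chi$ and adding the trivial-character contribution — a one-dimensional quotient generated by an Ogg-type class whose order is a multiple of $p$ (giving a factor of $p^{2}$ once the symmetry between the $P$- and $Q$-orbits under Atkin-Lehner is taken into account) — yields the stated formula $h_{1}(p) = (p \prod_{\chi} \tfrac{1}{4} B_{2,\chi})^{2}$.

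The main obstacle will be showing that the $\widetilde{E}_{\chi}$ and $\widetilde{F}_{\chi}$ exhaust $\divv(\cF(p))$ up to finite index equal to $1$, rather than generating a proper subgroup (which would yield only a divisor of $h_{1}(p)$). This is where one invokes the Kubert-Lang theorem (Theorem \ref{KLthm}), which characterises every modular unit on $X(p)$ as a Siegel-function product, together with an averaging argument descending from $\Gamma(p)$ to $\Gamma_{1}(p)$: any $\Gamma_{1}(p)$-invariant Siegel product is equivalent modulo constants to one of the form \eqref{fx1p}, to which Proposition \ref{X1pcongs} applies directly. A secondary but important subtlety is the non-vanishing of $B_{2,\chi}$ for each even non-trivial $\chi$ modulo $p$ — equivalent to $L(-1,\chi) \ne 0$ — which guarantees both that the product on the right-hand side is a well-defined non-zero integer and that $\mathcal{C}_{1}(p)$ is finite, consistent with Manin-Drinfeld.
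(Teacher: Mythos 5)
The paper does not prove this theorem from first principles: its proof is two citations, namely Takagi's identity $h_{1}(p) = \left( h_{1}^{\infty}(p) \right)^{2}$ and the Yu--Yang evaluation $h_{1}^{\infty}(p) = p \prod_{\chi} \frac{1}{4} B_{2,\chi}$. You instead attempt a self-contained derivation, which is a genuinely different and far more ambitious route; your eigenspace picture (each non-trivial even $\chi$ occurring with multiplicity two and contributing $\frac{1}{4}B_{2,\chi}$ per occurrence) is consistent with the determinant computation the paper carries out later in Lemma \ref{dets} and the proof of Theorem \ref{basisx1p2}. However, as a proof of the stated theorem your argument has a genuine gap that you name but do not close: the exhaustion step. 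If the lattice $L$ spanned by the divisors of your Siegel products is only contained in $\divv(\cF(p))$, then $[\Div_{c}(p) : L] = h_{1}(p)\cdot[\divv(\cF(p)) : L]$, so your index computation proves only that $h_{1}(p)$ \emph{divides} $\left( p\prod_{\chi}\frac{1}{4}B_{2,\chi}\right)^{2}$, not equality. The ``averaging argument descending from $\Gamma(p)$ to $\Gamma_{1}(p)$'' is precisely the hard content here (a naive norm of a $\Gamma_{1}(p)$-invariant unit returns a power of that unit, which changes the lattice index), and it is not carried out. Note that the paper deliberately avoids this issue by running the logic in the opposite direction: it takes $h_{1}(p)$ as known input from Takagi--Yu and uses the index computation to deduce that the Siegel products exhaust $\cF(p)$ (Theorem \ref{basisx1p2}); using the same computation to prove $h_{1}(p)$ would be circular unless exhaustion is established independently.

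A second, smaller but still real, gap is the trivial-character contribution. The factor $p^{2}$ does not come from ``an Ogg-type class of order a multiple of $p$'' living in the trivial eigenspace of the divisor matrix — on that eigenspace the divisor map is actually singular (the relevant row sums are governed by Lemma \ref{columnsum}), which is why the paper must pass to the auxiliary matrix $J$ and why, in Section \ref{ordersofcusps}, the analogous block matrix is noted to be singular. In the paper's index computation the power of $p$ arises from the index of the congruence sublattice (the conditions $\sum e_{i}\alpha^{2i} \equiv 0 \bmod p$, $\sum f_{i}\alpha^{2i} \equiv 0 \bmod p$, $\sum pe_{i}+f_{i} \equiv 0 \bmod 12$ of Proposition \ref{X1pcongs}) inside the full exponent lattice, recorded there as $\det A = 12p^{2}$, interacting with the determinant of the divisor map. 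Your sketch conflates these two sources and asserts the $p^{2}$ rather than deriving it; any honest version of your argument must separately track the congruence-sublattice index and the degenerate eigenvalue. Finally, a presentational caution: the twisted products $\widetilde{E}_{\chi}$, $\widetilde{F}_{\chi}$ have exponents in $\Z[\zeta_{p-1}]$ and are not themselves elements of $\cF(p)$; they are legitimate as a device for diagonalising the integral circulant matrix of the divisor map, but the index must ultimately be computed for a $\Z$-basis of the congruence sublattice, as the paper does with the $G_{i}$ and $H_{j}$.
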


\begin{proof}
In \cite[Theorem 4.1]{takagi1992cuspidal} Takagi proves that
\begin{center}
    $h_{1}\left( p\right) = \left( h_{1}^{\infty} \left( p\right) \right)^{2}$
\end{center}
where $h_{1}^{\infty} ( p )$ is the size of $\calC_{1}^{\infty} ( p )$, the subgroup of $J_{1}\left( p\right)$ generated by the cusps $P_{0}, \ldots, P_{n-1}$. In \cite[Theorem 5]{yu1980cuspidal}, Yu computed $h_{1}^{\infty} ( p )$, which is equal to $p \displaystyle \prod_{\chi} \frac{1}{4} B_{2, \chi}$, as proved by Yang in \cite[Theorem $A^{'}$]{yang2007modular}.
\end{proof}

Fix $\beta \in \{ \pm 1, \pm 5 \}$ with $p \equiv  \beta \mod{12} $. For $i \in \{0, \ldots, n-2 \}$, define
\begin{align}
    & G_{i} ( \tau  ) = E_{i} ( \tau  ) E_{n-1} ( \tau  )^{-\alpha^{2i+2}}F_{n-1} ( \tau  )^{p (\alpha^{2i+2} -1)}; \\
    & H_{i} ( \tau  ) = F_{i}( \tau  ) F_{n-1}( \tau  )^{ -\alpha^{2i+2} + p \beta (\alpha^{2i+2} -1)};
\end{align}
and further define 
\begin{align}
    & G_{n-1} ( \tau  ) = E_{n-1} ( \tau  )^{p} F_{n-1} ( \tau  )^{-\beta p}; \\
    & H_{n-1} ( \tau  ) = F_{n-1}( \tau  )^{12p}.
\end{align}

All of the functions above are modular units for $\Gamma_{1}( p )$ by Proposition \ref{X1pcongs}. Moreover their exponents correspond to a basis for the set of integer solutions to the system of congruences stated in Proposition \ref{X1pcongs}. In this section, we prove Theorem \ref{basisx1p2}, namely, we show that a certain subset of the above functions represents a basis for the group of modular units on $X_{1}(p)$, up to constants.  Our proof uses the following elementary results. For the rest of this 
section we write $a_{i} =  \frac{p}{2}B_{2} \left( \left \{ \frac{\alpha^{i}}{p} \right \} \right)$ with $0 \le i \le n-1$.
\begin{lem} \label{columnsum}
 $ \displaystyle \sum_{i=0}^{n-1} a_{i} =  -\frac{n}{12}  $.
\end{lem}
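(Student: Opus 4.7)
The plan is to exploit the symmetry of $B_{2}$ about $x = 1/2$ together with a direct evaluation of $\sum_{k=1}^{p-1} B_{2}(k/p)$.

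First I would observe that since $\alpha$ has order $p-1 = 2n$ modulo $p$, the element $\alpha^{n}$ reduces to $-1 \pmod{p}$. Consequently, for each $0 \le i \le n-1$, the reduction of $\alpha^{n+i}$ modulo $p$ is $p - r_{i}$, where $r_{i} \in \{1, \dots, p-1\}$ is the reduction of $\alpha^{i}$. In particular $\{\alpha^{n+i}/p\} = 1 - \{\alpha^{i}/p\}$. Using the elementary identity
\begin{equation*}
B_{2}(1-x) = (1-x)^{2} - (1-x) + \tfrac{1}{6} = x^{2} - x + \tfrac{1}{6} = B_{2}(x),
\end{equation*}
this gives $B_{2}(\{\alpha^{n+i}/p\}) = B_{2}(\{\alpha^{i}/p\})$, so the values $a_{i}$ pair up and
\begin{equation*}
2\sum_{i=0}^{n-1} a_{i} = \frac{p}{2} \sum_{i=0}^{p-2} B_{2}\!\left(\left\{\tfrac{\alpha^{i}}{p}\right\}\right) = \frac{p}{2} \sum_{k=1}^{p-1} B_{2}\!\left(\tfrac{k}{p}\right),
\end{equation*}
where in the last step I use that $\alpha^{0}, \dots, \alpha^{p-2}$ runs through all nonzero residues modulo $p$.

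Next I would evaluate $S := \sum_{k=1}^{p-1} B_{2}(k/p)$ directly. Expanding, $S = \tfrac{1}{p^{2}} \sum_{k=1}^{p-1} k^{2} - \tfrac{1}{p} \sum_{k=1}^{p-1} k + \tfrac{p-1}{6}$. Inserting the standard formulas $\sum_{k=1}^{p-1} k = \tfrac{p(p-1)}{2}$ and $\sum_{k=1}^{p-1} k^{2} = \tfrac{(p-1)p(2p-1)}{6}$ and simplifying yields $S = -\tfrac{p-1}{6p}$. (Equivalently, one can quote the Raabe distribution identity $\sum_{k=0}^{p-1} B_{2}(k/p) = p^{-1} B_{2}(0) = 1/(6p)$ and subtract the $k=0$ term.)

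Combining the two steps,
\begin{equation*}
2\sum_{i=0}^{n-1} a_{i} = \frac{p}{2} \cdot \left(-\frac{p-1}{6p}\right) = -\frac{p-1}{12} = -\frac{n}{6},
\end{equation*}
so $\sum_{i=0}^{n-1} a_{i} = -n/12$, as required. The proof is essentially mechanical; the only point requiring a moment of thought is the pairing trick via $\alpha^{n} \equiv -1 \pmod p$, but this is exactly what makes the sum over $i = 0, \dots, n-1$ (rather than over the full range $0, \dots, p-2$) reduce cleanly to a closed form.
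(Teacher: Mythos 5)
Your proof is correct and follows essentially the same route as the paper: both exploit the symmetry $B_{2}(\{-x\}) = B_{2}(\{x\})$ to convert the sum over powers of $\alpha$ into a sum of $B_{2}(k/p)$ over consecutive integers, then evaluate with the standard power-sum formulas. The only cosmetic difference is that you double the sum to range over all of $k = 1, \dots, p-1$, whereas the paper identifies it directly with $\sum_{x=1}^{n} B_{2}(x/p)$; both computations check out.
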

\begin{proof}
    Observe that $B_{2} \left( \{ -x \} \right)  = B_{2} \left( \{ x \} \right)$ and hence 
\begin{align*}
\displaystyle \sum_{i=0}^{n-1} B_{2} \left( \left \{ \frac{\alpha^{i}}{p} \right \} \right)  & = \displaystyle \sum_{x=1}^{n} B_{2} \left( \frac{x}{p} \right) = \frac{1}{6p^{2}}n(n+1)(2n+1) - \frac{1}{2p}n(n+1) + \frac{1}{6}n = \frac{-n}{6p}.
\end{align*}
\end{proof}

\begin{lem} \label{dets}
Let $a_{i}$ be as above, and let $N$ be the $n \times n$ circulant matrix 
\begin{center}
    $ N = \begin{bmatrix}
        a_{0} - \frac{1}{12} & a_{1} - \frac{1}{12} & \dots & a_{n-1} - \frac{1}{12} \\
         a_{1} - \frac{1}{12} & a_{2} - \frac{1}{12} & \dots & a_{0} - \frac{1}{12} \\
         \vdots & \vdots & \ddots & \vdots \\
         a_{n-1} - \frac{1}{12} & a_{0} - \frac{1}{12} & \dots & a_{n-2} - \frac{1}{12} \\            \end{bmatrix}.$
\end{center}
Then $\text{det}\left( N \right) = \frac{-n}{24} \displaystyle \prod_{\chi \ne \chi_{0}}  B_{2, \chi}$, where the product is taken over all even, non-principal Dirichlet characters modulo $p$ and $B_{2,\chi} = p \displaystyle \sum_{a=1}^{p} \chi \left( a \right) B_{2} \left( \frac{a}{p} \right)$ as before.
\end{lem}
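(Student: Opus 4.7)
The plan is to diagonalize $N$ by Fourier analysis on the cyclic group $\Z/n\Z$, which we identify through the generator $\alpha$ with the quotient $(\Z/p\Z)^{*}/\{\pm 1\}$. The characters of this group pull back to the $n$ even Dirichlet characters modulo $p$, and this bijection is what produces the factors $B_{2,\chi}$ on the right-hand side.

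First I split off the constant shift by writing $N = A - \tfrac{1}{12} J$, where $A_{ij} = a_{(i+j) \bmod n}$ and $J$ is the all-ones matrix. The matrix $A$ is a Hankel (``left'') circulant, so for $\omega = e^{2\pi i/n}$ and the Fourier vectors $v_k = (\omega^{jk})_{j=0}^{n-1}$ one checks directly that $A v_k = \lambda_k v_{-k}$, where $\lambda_k = \sum_{j=0}^{n-1} a_j \omega^{jk}$. Equivalently $A^2$ is a genuine circulant with eigenvalues $\lambda_k \lambda_{-k} = |\lambda_k|^2$, whence $|\det A| = \prod_k |\lambda_k|$, the overall sign being determined by the involution $k \mapsto -k$ on $\Z/n\Z$. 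Since $J v_0 = n v_0$ and $J v_k = 0$ for $k \neq 0$, passing from $A$ to $N$ only alters the $k = 0$ eigenvalue, and Lemma \ref{columnsum} gives the modified value $\lambda_0 - n/12 = -n/12 - n/12 = -n/6$.

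The key step is to identify, for $k \neq 0$, the eigenvalue $\lambda_k$ with a generalized Bernoulli number. Let $\chi_k$ be the even Dirichlet character mod $p$ defined by $\chi_k(\alpha) = \omega^k$; as $k$ ranges over $0, 1, \ldots, n-1$ these exhaust the even characters, with $k \neq 0$ corresponding to the non-principal ones. Starting from $B_{2,\chi_k} = p \sum_{a=1}^{p-1} \chi_k(a) B_2(a/p)$, I would parametrise $a \equiv \alpha^j \bmod p$ for $j \in \{0,\ldots,p-2\}$ and split the sum into the ranges $\{0,\ldots,n-1\}$ and $\{n,\ldots,2n-1\}$. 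Using $\alpha^{j+n} \equiv -\alpha^j \pmod p$ (since $\alpha^n \equiv -1$), $B_2(\{-x\}) = B_2(\{x\})$, and $\omega^{(j+n)k} = \omega^{jk}$, the two halves collapse to give
\[
B_{2,\chi_k} = 2p \sum_{j=0}^{n-1} \omega^{jk} B_2(\{\alpha^j/p\}) = 4 \lambda_k.
\]
Multiplying the modified eigenvalues together then yields $\det N$ as the claimed constant multiple of $\prod_{\chi \neq \chi_0} B_{2,\chi}$.

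The main obstacle I expect is the non-standard (Hankel) circulant structure of $A$: the Fourier basis does not diagonalize it in the usual sense, instead swapping $v_k \leftrightarrow v_{-k}$. To extract $\det N$ cleanly one must either work on the two-dimensional invariant subspaces spanned by $\{v_k, v_{-k}\}$ (treating the fixed subspaces at $k = 0$, and at $k = n/2$ when $n$ is even, separately), or pass through the genuine circulant $A^2$ and then track the sign contributed by the involution $k \mapsto -k$ on $\Z/n\Z$. Everything else is direct manipulation from the definitions and Lemma \ref{columnsum}.
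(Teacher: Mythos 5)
Your proposal is in substance the same proof as the paper's: the paper realises $N$ as the matrix of the operator $T=\sum_i\bigl(a_{i-1}-\tfrac1{12}\bigr)T_i$ on the space of even functions on $(\Z/p\Z)^{*}$ and diagonalises it in the basis of even Dirichlet characters, which under your identification $(\Z/p\Z)^{*}/\{\pm1\}\cong\Z/n\Z$ is exactly your Fourier diagonalisation; the two eigenvalue computations ($\lambda_k=\tfrac14B_{2,\chi_k}$ by folding the sum over $(\Z/p\Z)^{*}$ using evenness of $\chi$ and of $B_2(\{x\})$, and the modified $k=0$ eigenvalue $-n/6$ via Lemma \ref{columnsum}) are identical to the paper's. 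The one point where you are more careful is the Hankel (anti-diagonal) indexing of $N$: the paper absorbs this into the phrase ``up to some ordering of the basis $\delta_1,\ldots,\delta_n$'', which silently changes the determinant by the sign of that reordering, whereas you correctly observe that $Av_k=\lambda_kv_{-k}$ and that a sign coming from the involution $k\mapsto-k$ must be tracked. You leave that sign undetermined; carrying it out, each pair $\{k,-k\}$ contributes $\det\begin{psmallmatrix}0&\lambda_{-k}\\ \lambda_k&0\end{psmallmatrix}=-\lambda_k\lambda_{-k}$, so one gets $\det N=(-1)^{\lfloor(n-1)/2\rfloor}\cdot\bigl(-\tfrac n6\bigr)\prod_{k\ne0}\lambda_k$, i.e.\ the formula in Lemma \ref{dets} is only correct up to this $n$-dependent sign. (Note also that your computation lands on $-\tfrac n6\prod_{\chi\ne\chi_0}\tfrac14B_{2,\chi}$, agreeing with the final line of the paper's own proof rather than with the literal statement $-\tfrac n{24}\prod_{\chi\ne\chi_0}B_{2,\chi}$, which misplaces the factors of $\tfrac14$.) Neither discrepancy is material downstream, since the proof of Theorem \ref{basisx1p2} only uses $\det(N)^2$ and ultimately $\vert\det M\vert$, but if you want your argument to prove the lemma as stated you must finish the sign bookkeeping rather than gesture at it.
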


\begin{proof}
    This proof is standard, and a similar result is proved in \cite{yang2007modular} in a comparable manner. Let $V$ be the complex vector space of $\mathbb{C}-$valued even functions  $f : \left( \Z / p \Z \right)^{*} \longrightarrow \mathbb{C}$. For $i \in \{ 1, \ldots, n \}$, define the linear operators on $V$
\begin{center}
    $T_{i} f (x) = f ( \alpha^{i-1} x \ \text{mod} \ p )$
\end{center}
and consider the operator 
\begin{center}
    $T = \displaystyle \sum_{i=1}^{n} \left( a_{i-1} - \frac{1}{12} \right) T_{i}$.
\end{center}

There are two standard bases of $V$. Firstly, $\delta_{1}, \ldots, \delta_{n}$, where 
 \begin{equation*}
\delta_{i} (x) = 
\begin{cases}
    1 & \text{if } x \equiv \ \pm  \alpha^{i-1} \ \text{mod} \ p  \\ 
   0  & \text{otherwise }
\end{cases}
\end{equation*}
We note that $N$ is the matrix of $T$ with respect to this basis, up to some ordering of the basis $\delta_{1} , \ldots, \delta_{n}$. 
 
The second basis of $V$ is the set of even Dirichlet characters modulo $p$. If $\chi$ is a non-principal even Dirichlet character, 
\begin{align*}
    T \chi ( x) & = \displaystyle \sum_{i=1}^{n} ( a_{i-1} - \frac{1}{12}) \chi \left( \alpha^{i-1} \right) \chi \left( x \right) \\
           & =  \displaystyle \sum_{i=1}^{n} a_{i-1} \chi \left( \alpha^{i-1} \right) \chi \left( x \right) \\
           & =  \displaystyle \sum_{i=1}^{n}  \frac{1}{12} \chi \left( \alpha^{i-1} \right) \chi \left( x \right)     \\
 & =   \frac{1}{2} p \displaystyle \sum_{a=1}^{p} \chi \left( a \right) B_{2} \left( \frac{a}{p} \right) \chi \left( x \right)
\end{align*}
and hence $ T \chi \left( x \right) = \frac{1}{4} B_{2, \chi } \chi \left( x \right)$.
If $\chi = \chi_{0}$ is the principal Dirichlet character modulo $p$, then 
\begin{align*}
    T \chi_{0} \left( x \right) &= \displaystyle \sum_{i=1}^{n} \left( a_{i-1} - \frac{1}{12} \right) T_{i} \chi_{0} \left( x \right) \\
     &= \displaystyle \sum_{i=1}^{n} \left( a_{i-1} - \frac{1}{12} \right) \chi_{0} \left( x \right) \\
      &=  \left(\frac{p}{12 }\displaystyle \sum_{a=1}^{n} B_{2} \left( \frac{a}{p} \right) - \frac{n}{12} \right) \chi_{0} \left( x \right) \\
      & = \frac{-2n}{12} \chi_{0} \left( x \right)
        \end{align*}
where the last equality follows from Lemma \ref{columnsum}. The matrix of $T$ with respect to this second basis is diagonal, and has determinant $\frac{-2s}{12} \displaystyle \prod_{\chi \ne \chi_{0}} \frac{1}{4} B_{2, \chi}$.  
\end{proof}
\begin{proof}[Proof of Theorem \ref{basisx1p2}]
    Let $\Lambda \cong \mathbb{Z}^{p-2}$ be the lattice generated by differences of cusps. We take $P_{0} - Q_{n-1},\ldots, P_{n-1} - Q_{n-1}, Q_{0} - Q_{n-1}, \ldots, Q_{n-2} - Q_{n-1}$ as a basis of $\Lambda$. Let  $\Lambda'$ be the sub-lattice generated by the divisors of $G_{0}, \ldots, G_{n-1}, H_{1}, \ldots, H_{n-1}$ viewed as elements of $\Lambda$. We denote by $M$ the matrix whose columns are the coefficients of these divisors (with respect to the fixed basis of $\Lambda$). Observe that 
    \begin{center}
        $\left( \Lambda : \Lambda' \right) = \vert \det\left( M \right) \vert$
    \end{center}
and thus it suffices to show $ \vert \det M \vert = h_{1}\left( p\right)$. Note that $M = AB $ where:
\begin{center}
    $ A = \begin{bmatrix}
        1 & 0 & \dots & 0 & - \alpha^{2} & 0 & \dots & 0 &  p(\alpha^{2} -1 ) \\
          0 & 1 & \dots & 0 &  - \alpha^{4} & 0 & \dots & 0 & p(\alpha^{4} -1 ) \\
          \vdots & \vdots & \ddots & \vdots & \vdots & \vdots & \ddots & \vdots & \vdots \\
          0 & 0 & \dots & 1 &  - \alpha^{2n-2} & 0 & \dots & 0 & p(\alpha^{2n -2} -1 ) \\
          0 & 0 & \dots & 0 &  p & 0 & \dots & 0 & -\beta p \\
          0 & 0 & \dots & 0 &  0 & 1 & \dots & 0 &  - \alpha^{4} + p \beta (\alpha^{4} -1 ) \\
           \vdots & \vdots & \ddots & \vdots & \vdots & \vdots & \ddots & \vdots & \vdots \\
           0 & 0 & \dots & 0 &  0 & 0 & \dots & 1 &  - \alpha^{2n-2} + p\beta (\alpha^{2n-2} -1 ) \\   
            0 & 0 & \dots & 0 &  0 & 0 & \dots & 0 &  12p \\              \end{bmatrix}$
\end{center}
\begin{center}
    $ B = \begin{bmatrix}
       a_{0} & a_{1} & \dots & a_{n-1} & \frac{1}{12} & \frac{1}{12} & \dots & \frac{1}{12} \\
      a_{1} & a_{2} & \dots & a_{0} & \frac{1}{12} & \frac{1}{12} & \dots & \frac{1}{12} \\
        \vdots  & \vdots  & \ddots & \vdots & \vdots & \vdots  & \ddots & \vdots \\
       a_{n-1} & a_{0} & \dots & a_{n-2} & \frac{1}{12} & \frac{1}{12} & \dots & \frac{1}{12} \\      
        \frac{1}{12} & \frac{1}{12} & \dots & \frac{1}{12} & a_{1} & a_{2} & \dots & a_{n-1} \\ 
         \frac{1}{12} & \frac{1}{12} & \dots & \frac{1}{12} & a_{2} & a_{1} & \dots & a_{0} \\ 
         \vdots  & \vdots  & \ddots & \vdots & \vdots & \vdots  & \ddots & \vdots \\
\frac{1}{12} & \frac{1}{12} & \dots & \frac{1}{12} & a_{n-1} & a_{0} & \dots & a_{n-3}  \\ 
\end{bmatrix}$.
\end{center}
The determinant of $A$ is clearly $12p^{2}$. To compute the determinant of $B$ we consider the $(p-1) \times (p-1)$ block matrix:
\begin{center}
    $J = \begin{bmatrix}
        a_{0}  - \frac{1}{12} & \dots & a_{n-1} - \frac{1}{12} & 0 & \dots & 0 \\
        \vdots &  \ddots & \vdots &  \vdots &  \ddots & \vdots \\
        a_{n-1} - \frac{1}{12} & \dots & a_{n-2} -   \frac{1}{12} & 0  & \dots & 0 \\
        0  &  \dots & 0  & a_{0}  - \frac{1}{12} & \dots & a_{n-1} - \frac{1}{12} \\
         \vdots & \ddots & \vdots & \vdots &  \ddots & \vdots \\
         0 &  \dots & 0  & a_{n-1} - \frac{1}{12} &  \dots & a_{n-2} - \frac{1}{12} \\
          \end{bmatrix}$\end{center}

and we perform the following sequence of  row and column operations on $J$:
\begin{itemize}
    \item[1.] add all rows to the $(n+1)$th row;
    \item[2.] add all columns to the last one;
    \item[3.] factor out $-2n$ from the $n+1$th row;
    \item[4.] add the $n+1$th row to all other rows.
    \end{itemize} 
Applying Lemma \ref{columnsum}, we the resulting matrix is:
\begin{center}
    $ -2n \cdot \begin{bmatrix}
        a_{0}  & \dots & a_{n-1} & \frac{1}{12} & \dots & 0 \\
        \vdots &  \ddots & \vdots &  \vdots &  \ddots & \vdots \\
        a_{n-1}  & \dots & a_{n-2}  & \frac{1}{12}  & \dots & 0  \\
        \frac{1}{12}  &  \dots & \frac{1}{12}  &   \frac{1}{12} & \dots &  \frac{2n}{12} \\
         \vdots & \ddots & \vdots & \vdots &  \ddots & \vdots \\
         \frac{1}{12} &  \dots & \frac{1}{12} &  a_{n-1} &  \dots &  0 \\
          \end{bmatrix}$.
\end{center}
Expanding by the last column, the determinant of $J$ is:
\begin{center}
    $\text{det} \left( J \right) = -2n \left( \frac{2n}{12} \right) \text{det} \left( B \right)$.
\end{center}
Since $J$ is a block matrix, we find 
\begin{center}
$\text{det}\left( J \right) = \text{det} \left( N \right)^{2}$
\end{center}
where $N$ is the matrix defined in Lemma \ref{dets}. Thus 
\begin{center}
    $ \text{det} \left( B \right) = \frac{-1}{12} \displaystyle \prod_{\chi \ne \chi_{0}} \frac{1}{4} B_{2,\chi}$,
\end{center}
and hence 
\begin{center}
    $ \vert \text{det} \left( M \right) \vert = \vert \text{det} \left( A \right) \vert \vert \text{det} \left( B \right) \vert = p^{2} \vert \displaystyle \prod_{\chi \ne \chi_{0} } B_{2, \chi} \vert^{2} $
\end{center}
and so our result follows from Theorem \ref{cuspidalclassnumberX1p}. 
\end{proof}

We remarked that although the function $H_{0}( \tau )$ was defined at the beginning of this section, it does not appear in our basis. We give the explicit presentation of this function in terms of our basis and conclude with a corollary which determines different bases of $\cF( p )$ containing $H_{0}$.

 Consider the function 
 \begin{align*}
     F ( \tau) & = \displaystyle \prod_{i=0}^{n-2}G_{i} ( \tau) H_{i} ( \tau)  = f ( \tau) E_{n-1}( \tau)^{C-1}F_{n-1}( \tau )^{D-1}
 \end{align*}
 where $f ( \tau) = \displaystyle \prod_{i=0}^{n-1}E_{i}( \tau ) F_{i} ( \tau )$ and the integer constants $C,D$ are defined as
 \begin{itemize}
    \item $C  =  \displaystyle \sum_{i=0}^{n-2} - \alpha^{2i +2} = \frac{\alpha^{2n} - \alpha^{2}}{1 - \alpha^{2}}; $
   \item $ D = \displaystyle \sum_{i=0}^{n-2} p \left( \alpha^{2i+2} -1 \right) - \alpha^{2i+2} + p \beta \left( \alpha^{2i+2} -1 \right) - p \beta \left( n-1 \right)$.
\end{itemize}
 The function $f( \tau )$ is a modular unit for $\Gamma_{1}( p )$ by Proposition \ref{X1pcongs} and since the order of vanishing at any cusp is $0$, it is necessarily a constant function. Let $C_{1}$ be this constant value. We further observe that $C -1$ is divisible by $p$ and let $C_{2} = \frac{C-1}{p}$. Then 
 \begin{center}
     $F ( \tau ) = C_{1} G_{n-1} ( \tau)^{C_{2}} F_{n-1}( \tau )^{\beta\left( C -1 \right) + D -1}$.
 \end{center}
 By our choice of $\beta$, $\beta(C -1) + D -1 \ \equiv \ 0 \mod{12p} $, and thus 
 \begin{center}
     $F ( \tau) = C_{1} G_{n-1} ( \tau)^{C_{2}} H_{n-1} ( \tau)^{C_{3}}$
 \end{center}
 where $C_{3} = \frac{1}{12p} \left(\beta(C -1) + D -1\right) \in \Z$. We deduce the following. 
 \begin{cor} \label{altbasis}
  Any subset of $\{ G_{0} ( \tau ), \ldots, G_{n-1}( \tau), H_{0} ( \tau), \ldots, H_{n-1} ( \tau ) \}$ consisting of $p-2$ functions and containing $G_{n-1}$ and $H_{n-1}$ is a basis for the group of modular units for $\Gamma_{1} ( p )$ modulo constants. 
 \end{cor}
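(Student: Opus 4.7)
The plan is to leverage the identity
\[
F(\tau) \;=\; \prod_{i=0}^{n-2} G_{i}(\tau) H_{i}(\tau) \;=\; C_{1}\, G_{n-1}(\tau)^{C_{2}}\, H_{n-1}(\tau)^{C_{3}}
\]
derived immediately before the corollary. Passing to the quotient $\cF(p)/\mathbb{C}^{\times}$ and writing classes additively, this says
\[
\sum_{i=0}^{n-2}\bigl([G_{i}] + [H_{i}]\bigr) \;-\; C_{2}[G_{n-1}] \;-\; C_{3}[H_{n-1}] \;=\; 0.
\]
The crucial feature is that every one of the classes $[G_{0}], \ldots, [G_{n-2}], [H_{0}], \ldots, [H_{n-2}]$ appears with coefficient exactly $+1$ in this relation.

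I would then argue as follows. Let $\mathcal{S} \subseteq \{G_{0}, \ldots, G_{n-1}, H_{0}, \ldots, H_{n-1}\}$ be any subset of size $p-2$ containing $G_{n-1}$ and $H_{n-1}$; then $\mathcal{S}$ is obtained by deleting exactly one element $X$ from $\{G_{0}, \ldots, G_{n-2}, H_{0}, \ldots, H_{n-2}\}$. If $X = H_{0}$, this is precisely the basis of Theorem \ref{basisx1p2} and nothing needs to be done. Otherwise, rearranging the displayed relation solves for $[H_{0}]$ as
\[
[H_{0}] \;=\; -\sum_{\substack{0 \le i \le n-2 \\ i \ne 0}} [G_{i}] \;-\; \sum_{i=1}^{n-2} [H_{i}] \;-\; [G_{0}] \;+\; C_{2}[G_{n-1}] \;+\; C_{3}[H_{n-1}],
\]
i.e.\ $[H_{0}]$ is an integer combination of the Theorem \ref{basisx1p2} basis in which the coefficient on $[X]$ is $-1$. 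Thus the change-of-basis matrix that replaces the vector $[X]$ by $[H_{0}]$ (keeping every other element of the Theorem \ref{basisx1p2} basis fixed) is upper triangular with $\pm 1$ on the diagonal, hence lies in $\GL_{p-2}(\Z)$. Therefore $\mathcal{S} = \bigl(\{G_{0}, \ldots, G_{n-1}, H_{1}, \ldots, H_{n-1}\} \setminus \{X\}\bigr) \cup \{H_{0}\}$ is again a $\Z$-basis of $\cF(p)/\mathbb{C}^{\times}$.

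There is no real obstacle here: the entire content of the corollary is packaged into the $\pm 1$ coefficients in the relation, and once those are observed the argument is a single invocation of integral change of basis. The only mild bookkeeping point is to notice that the coefficients $C_{2}$ and $C_{3}$ on $[G_{n-1}]$ and $[H_{n-1}]$ may be large, but this is irrelevant because $G_{n-1}$ and $H_{n-1}$ are never the element being removed.
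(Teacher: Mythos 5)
Your proposal is correct and follows essentially the same route as the paper: the paper derives the relation $\prod_{i=0}^{n-2} G_{i}H_{i} = C_{1}G_{n-1}^{C_{2}}H_{n-1}^{C_{3}}$ with $C_{2}, C_{3} \in \Z$ and then states the corollary as an immediate deduction, and your argument supplies exactly the intended deduction, namely that the coefficient $-1$ on the removed element $X$ makes the change of basis from the Theorem \ref{basisx1p2} basis unimodular. (The matrix is not literally upper triangular without reordering, but its determinant is $\pm 1$ as you conclude, so the argument stands.)
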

  
\section{Subgroups of Modular Units} \label{subgroups}
There are two natural subgroups of modular units which one is lead to consider when studying the rational cuspidal group $\mathcal{C}_{1}(p)(\Q)$. In this section we write 
$\Div_{c}^{\infty}(p)$ for the group of degree zero divisors supported on $P_{0}, \ldots, P_{n-1}$ and $\Div_{c}^{\Q}(p)$ for the group of degree $0$ divisors supported on cusps and fixed by the action of Galois. Furthermore, we denote by $\mathcal{F}^{\infty}(p)$ and $\mathcal{F}^{\Q}(p)$ the subgroups of $\mathcal{F}(p)$ consisting of functions with divisor in $\Div_{c}^{\infty}(p)$ and 
 $\Div_{c}^{\Q}(p)$ respectively.  The arising quotients:
 \begin{align*}
     & \mathcal{C}_{1}^{\infty} (p) =
     \Div_{c}^{\infty}(p) / \langle \divv(f) : f \in \mathcal{F}^{\infty}(p) \rangle \\ 
     & \mathcal{C}_{1}^{\Q}(p) := \Div_{c}^{\Q}(p) / \langle \divv(f) : f \in \mathcal{F}^{\Q}(p) \rangle      
      \end{align*}
are natural subgroups of the \textit{rational cuspidal subgroup} $\mathcal{C}_{1}(p)(\Q)$, that is the subgroup consisting of elements of $\mathcal{C}_{1}(p)$ fixed by the action of Galois, and we have the clear inclusions:
\begin{center}
    $\mathcal{C}_{1}^{\infty}(p) \subseteq \mathcal{C}_{1}^{\Q} (p) \subseteq \mathcal{C}_{1}(p)(\Q).$
\end{center}

In this this section we construct explicit bases for $\mathcal{F}^{\infty}(p)$ and $\mathcal{F}^{\Q}(p)$ modulo constants and prove the the above inequalities are in fact equalities.

\subsection{Proof of Theorem \ref{basis2}}
In \cite[Theorem 1]{yang2007modular} Yang finds a basis $f_{1}, \ldots, f_{n-1}$ for $\cF^{\infty}(p)$, constructed using $n-1$ functions $\Tilde{E}_{i} ( \tau  )$, which are defined as products of scaled  Siegel functions. We  construct $n-1$ functions $I_{1}, \ldots I_{n-1}$ and prove that their orders of vanishing at each cusps coincide with those of the $f_{i}$. More precisely, let  $\gamma = \alpha^{p-2}$ and define 
\begin{align*}
 I_{i} ( \tau  ) & = \frac{E_{i-1} ( \tau  ) E_{i+1} ( \tau  )^{\gamma^{2}}}{E_{i} ( \tau  )^{ \gamma^{2} +1}}
\end{align*}
for $1 \le i \le n-2$ and  $I_{n-1} ( \tau  ) = \left(\frac{E_{n-2} ( \tau  )}{E_{n-1} ( \tau  )} \right)^{p}$.

\begin{proof}[Proof of Theorem \ref{basis2}]
The basis constructed in  \cite[Theoren 1]{yang2007modular} is as follows: for $1 \le i \le n-2$
\begin{align*}
 f_{i} ( \tau  ) & = \frac{\Tilde{E}_{i-1} ( \tau  ) \Tilde{E}_{i+1} ( \tau  )^{\gamma^{2}}}{\Tilde{E}_{i} ( \tau  )^{ \gamma^{2} +1}}
\end{align*} and 
\begin{align*}
f_{n-1} ( \tau  ) & = \left(\frac{\Tilde{E}_{n-2} ( \tau  )}{\Tilde{E}_{n-1} ( \tau  )} \right)^{p}.
\end{align*}
where $\Tilde{E}_{i} ( \tau  ) = E_{\alpha^{i}}^{(p)}(\tau) = -g_{(\frac{\alpha^{i}}{p}, 0)}(p \tau)$, as defined in \cite[Page 518]{yang2007modular}.

From \cite[Proposition 3]{yang2007modular} we deduce that for $1 \le i \le n-2$ and $0 \le k \le n-1$
\begin{align*}
 & \text{ord}_{P_{k}} \left( f_{i} \right) = \frac{p}{2} B_{2} \left( \left \{ \frac{\alpha^{i-1+k}}{p} \right \} \right) + \frac{p \gamma^{2}}{2} B_{2} \left( \left \{ \frac{\alpha^{i+1+k}}{p} \right \} \right) - \frac{p (\gamma^2 +1) }{2} B_{2} \left( \left \{ \frac{\alpha^{i+k}}{2} \right \} \right),   \\
 & \text{ord}_{Q_{k}} \left( f_{i} \right) = 0,
 \end{align*}
 and similarly, for all $0 \le k \le n-1$
 \begin{align*}
 & \text{ord}_{P_{k}} \left( f_{n-1} \right) = \frac{p^2}{2} \left( B_{2} \left( \left \{ \frac{\alpha^{n-2+k}}{p} \right \} \right) - B_{2} \left( \left \{ \frac{\alpha^{n-1+k}}{p} \right \} \right) \right),  \\
 & \text{ord}_{Q_{k}} \left( f_{n-1} \right) = 0.
 \end{align*}
Using Proposition \ref{ordsvansX1}, we verify that the orders of vanishing of $I_{1}, \ldots, I_{n-1}$ at each cusp coincide with the above.
 \end{proof}

\subsection{Proof of Theorem \ref{basis3}}
Notably, $\cF^{\infty}(p)$ is a subgroup of $\cF^{\Q}(p)$, and we extend the basis $I_{1}, \ldots, I_{n-1}$ constructed in the previous section to basis of $\mathcal{F}^{\Q}(p)$ as follows. 
In our choice of model the cusps $P_{i}$ are rational and the $Q_{j}$ form a single orbit under the Galois action. For  $f \in \cF( p )$, its divisor is fixed by the action of Galois if and only if it is of the form 
\begin{equation} \label{Qdivisors}
     \divv  f  = \displaystyle \sum_{i=0}^{n-1} d_{i}P_{i} + d \displaystyle \sum_{i=0}^{n-1} Q_{i}
\end{equation}
for integers $d_{0}, \ldots, d_{n-1}, d$. 

When $p \equiv 1 \mod{4}$, define:
\begin{equation} \label{Cs}
    I_{n} \left( \tau \right) = 
        \left( E_{0} \left( \tau \right) E_{\frac{n}{2}} \left( \tau \right) \right)^{6}.
\end{equation}
When $p \ \equiv \ 3 \mod{4}$, let $m \in \Z $ be such that 

\begin{equation} 
   \alpha^{2m} \equiv  \begin{cases}
       -2  \mod{p} & \text{if} \ \ p  \equiv 3 \mod{8} \\
         2 \mod{p} & \text{if} \ \  p  \equiv 7 \mod{8}.    \end{cases}
\end{equation}
We assume, without loss of generality, that $m$ is the unique integer with the above property and $0 \le m < n$. Define 
\begin{equation} 
   I_{n} \left( \tau \right) =  \begin{cases}
       E_{0} \left( \tau \right)^{8} E_{m} \left(\tau \right)^{4}  \ & \text{if} \  p  \equiv 3 \mod{8} \\
         E_{0} \left( \tau\right)^{24} E_{m} \left( \tau \right)^{-12} \ & \text{if} \  p  \equiv 7 \mod{8}.
         \end{cases}
\end{equation}
It follows from Proposition \ref{X1pcongs} that in all cases $I_{n}$ is a function on $X_{1} ( p )$ and using Proposition \ref{ordsvansX1} we deduce that its order of vanishing at each $Q_{i}$ is $1$. We prove that $I_{1}, \ldots, I_{n}$ is a basis for $\mathcal{F}^{\Q}(p)$ modulo constants. 
\begin{proof}[Proof of Theorem \ref{basis3}]
The  divisor of any $f \in \cF^{\Q} (p)$ is necessarily of the form 
 \begin{equation} 
     \divv ( f ) = \displaystyle \sum_{i=0}^{n-1} d_{i}P_{i} + d \displaystyle \sum_{i=0}^{n-1} Q_{i}
\end{equation}
for an integer $d$. As the order of vanishing of $I_{n}$ at each $Q_{i}$ is $1$, we have $f / I_{n}^{d} \in \cF^{\infty} ( p)$; and hence, up to scaling, $f$ is in the span of $I_{1}, \ldots, I_{n}$.
\end{proof}

\subsection{Rational Cuspidal Subgroup} \label{rationalgrps}
As previously discussed, the cuspidal group $\calC_{1} (p)$ has three interesting rational subgroups, with the following clear inclusions: 

\begin{center}
    $ \calC_{1}^{\infty} (p) \subseteq \calC_{1}^{\Q} (p) \subseteq \calC_{1}(p) \left( \Q \right).$
\end{center}
Below we prove that the above inclusions are in fact equalities, and thus in this case we have a well defined \textit{rational cuspidal group} and an efficient method of computing it using the basis constructed in Theorem \ref{basis2}.

The first equality is a simple corollary of Theorem \ref{basis3}.
\begin{cor}
$\calC_{1}^{\infty} \left ( p \right) = \calC_{1}^{\mathbb{Q}} \left ( p \right).$
\end{cor}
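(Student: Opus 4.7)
The plan is to establish both inclusions by producing, for every class in $\mathcal{C}_1^{\mathbb{Q}}(p)$, a representative whose support lies entirely on the rational cusps $P_0,\dots,P_{n-1}$. The inclusion $\mathcal{C}_1^{\infty}(p)\subseteq\mathcal{C}_1^{\mathbb{Q}}(p)$ is immediate: divisors supported on $P_0,\dots,P_{n-1}$ are automatically Galois-fixed, and any modular unit in $\mathcal{F}^{\infty}(p)$ lies in $\mathcal{F}^{\mathbb{Q}}(p)$, so the inclusion of divisor groups descends to a well-defined homomorphism on quotients.

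For surjectivity, I would start with an arbitrary $D\in\Div_c^{\mathbb{Q}}(p)$, which by \eqref{Qdivisors} has the form
\begin{equation*}
D = \sum_{i=0}^{n-1} d_i P_i + d\sum_{i=0}^{n-1} Q_i.
\end{equation*}
The key is the auxiliary unit $I_n\in\mathcal{F}^{\mathbb{Q}}(p)$ constructed for the proof of Theorem \ref{basis3}: its order of vanishing at each $Q_i$ is exactly $1$, so $\divv(I_n)=\sum_{i=0}^{n-1} Q_i + \sum_{j=0}^{n-1} e_j P_j$ for some integers $e_j$. Subtracting $d\cdot\divv(I_n)$ from $D$ produces a divisor supported entirely on $P_0,\dots,P_{n-1}$ that is equivalent to $D$ modulo $\mathcal{F}^{\mathbb{Q}}(p)$, proving that the class of $D$ lies in the image of $\mathcal{C}_1^{\infty}(p)$.

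For injectivity, suppose $D\in\Div_c^{\infty}(p)$ maps to $0$ in $\mathcal{C}_1^{\mathbb{Q}}(p)$, so $D=\divv(f)$ for some $f\in\mathcal{F}^{\mathbb{Q}}(p)$. Since $\divv(f)$ is supported only on the $P_i$, the definition of $\mathcal{F}^{\infty}(p)$ forces $f\in\mathcal{F}^{\infty}(p)$, hence $[D]=0$ in $\mathcal{C}_1^{\infty}(p)$. There is no real obstacle here; the entire content of the corollary has already been packaged into the existence of $I_n$, so this statement is essentially a bookkeeping consequence of Theorem \ref{basis3}.
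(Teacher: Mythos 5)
Your proposal is correct and matches the paper's argument: the paper likewise takes a Galois-fixed cuspidal divisor $D = \sum_i d_i P_i + d\sum_i Q_i$ and replaces it by $D - \divv(I_n^d)$, using exactly the property that $I_n$ has order of vanishing $1$ at each $Q_i$. Your additional injectivity remark is not spelled out in the paper but is the same trivial observation that a unit whose divisor is supported on the $P_i$ lies in $\mathcal{F}^{\infty}(p)$ by definition.
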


\begin{proof}
 Take any  $ \left[ D \right] \in \calC_{1}^{\mathbb{Q}} \left ( p \right) \setminus \calC_{1}^{\infty} \left ( p \right) $, thus 
 \begin{center}
     $D = \displaystyle \sum_{i=0}^{n-1} d_{i}P_{i} + 
     d\displaystyle \sum_{i=0}^{n-1} Q_{i}$
\end{center}
for integers $d_{0}, \ldots, d_{n-1}, d$. Then $D \sim D - \divv (I_{n}^{d}) = D'$, and $ \left[ D' \right] \in \calC_{1}^{\infty} \left ( p \right)$.
 \end{proof}
 The second equality  is not as clear, but can still be proved using elementary methods. 
 \begin{prop} \label{c1inf}
     $\calC_{1}^{\Q}  ( p ) = \calC_{1} (p) \left( \mathbb{Q} \right). $
  \end{prop}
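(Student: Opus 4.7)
The inclusion $\calC_1^\Q(p) \subseteq \calC_1(p)(\Q)$ is immediate, so I focus on the reverse inclusion. Take $[D] \in \calC_1(p)(\Q)$, write $D = \sum_i a_i P_i + \sum_i b_i Q_i$, and set $b := (b_i)_i \in \Z^n$. The aim is to produce $f \in \calF(p)$ and $c \in \Z$ such that $\ord_{Q_i}(f) = b_i - c$ for every $i$, for then $D - \divv(f)$ has all $Q$-coefficients equal to $c$ and therefore lies in $\Div_c^\Q(p)$, showing $[D] \in \calC_1^\Q(p)$. Since $T := (1,\ldots,1) = \pi_Q(\divv(I_n))$ lies in the sublattice
\[
M := \pi_Q(\calF(p)) \subseteq \Z^n, \qquad \pi_Q(f) := (\ord_{Q_i}(f))_i,
\]
the existence of such $c, f$ is equivalent to the single condition $b \in M$, i.e., to the vanishing of the class $[b]$ in the quotient
\[
V := \Z^n / M.
\]
Note that $V$ is finite, because $\ker \pi_Q = \calF^\infty(p)$ has rank $n-1$ while $\calF(p)$ has rank $p-2 = 2n-1$, so $M$ has full rank $n$ in $\Z^n$.

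Next I exploit the Galois hypothesis. For each $\sigma \in G := \Gal(\Q(\zeta_p)^+/\Q)$ the divisor $\sigma(D) - D$ is principal; since the $P_i$ are $\Q$-rational, this divisor is supported on the $Q$-cusps, so $(1-\sigma)b \in \pi_Q(\calF_Q(p)) \subseteq M$ for every $\sigma \in G$. In the quotient this reads $(1-\sigma)[b]=0$, i.e., $[b] \in V^G$. Thus the proposition reduces to the cohomological vanishing
\[
V^G = 0.
\]

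To establish this I plan to write down a presentation of $M$ explicitly, using the basis $G_0,\ldots,G_{n-1},H_1,\ldots,H_{n-1}$ of $\calF(p)$ from Theorem \ref{basisx1p2} and the order formulas in Proposition \ref{ordsvansX1}: the matrix of $\pi_Q$ on this basis is built out of the circulant blocks $\bigl(\tfrac{p}{2} B_2(\{\alpha^{i+k}/p\})\bigr)_{i,k}$ together with the constant-$\tfrac{1}{12}$ contributions, and the cyclic $G$-action on the $Q_k$ is precisely the circulant shift. Exploiting this circulant structure (as in the determinant computation of Lemma \ref{dets}) together with the Bernoulli identity of Lemma \ref{columnsum}, one can diagonalize the matrix against the characters of $G$ and read off $V^G$ from the trivial-character block. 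The hoped-for cancellation comes from the fact that the trivial-character column of the Bernoulli matrix already lies in $M$ via $I_n$.

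The main obstacle is this final combinatorial verification that $V^G = 0$; everything else is formal. Once it is in hand, choose any preimage $f \in \calF(p)$ of $b \in M$ under $\pi_Q$, set $D' := D - \divv(f) \in \Div_c^\Q(p)$, and observe $[D] = [D'] \in \calC_1^\Q(p)$, proving the reverse inclusion and thus Proposition \ref{c1inf}.
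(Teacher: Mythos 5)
There is a genuine gap: the vanishing $V^G=0$ to which you reduce the proposition is false in general, so the step you flag as ``the main obstacle'' cannot be carried out. Indeed, the surjection $\pi_Q\colon \Div_c(p)\to\Z^n$ (with kernel $\Div_c^{\infty}(p)$) induces an exact sequence of $G$-modules
\[
0\longrightarrow \calC_{1}^{\infty}(p)\longrightarrow \calC_{1}(p)\longrightarrow V\longrightarrow 0,
\]
so that $|V|=h_{1}(p)/h_{1}^{\infty}(p)=h_{1}^{\infty}(p)$ and $V\cong \calC_1(p)/\calC_1^{\infty}(p)$ with the cyclic-shift action. For $p=11$ this gives $V\cong\Z/5\Z$ with $G\cong\Z/5\Z$ acting through $\Aut(\Z/5\Z)\cong\Z/4\Z$, hence trivially, so $V^G\cong\Z/5\Z\neq 0$. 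The proposition is nevertheless true for $p=11$ because the image of $\calC_1(11)^G$ in $V^G$ happens to be zero --- but that statement is precisely the proposition itself and is not implied by $[b]\in V^G$. The loss occurs when you pass from ``$\sigma(D)-D$ is principal'' to ``$(1-\sigma)b\in M$'': since $\sigma$ fixes every $P_i$, the divisor $\sigma(D)-D$ is the divisor of a unit supported on the $Q_i$ alone, so $(1-\sigma)b$ lies in the rank-$(n-1)$ sublattice $\pi_Q(\divv(\mathcal{F}(p))\cap\Div_c^{0})$ of $M$ (where $\Div_c^0$ denotes divisors supported on the $Q_i$), and moreover $\sigma\mapsto\sigma(D)-D$ is a $1$-cocycle; remembering only the class of $b$ modulo all of $M$ discards both constraints, and with them the proof.

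For comparison, the paper keeps exactly this structure by taking Galois cohomology of the sequence $0\to\divv(\mathcal{F}(p))\to\Div_c(p)\to\calC_1(p)\to 0$: the obstruction to representing a Galois-fixed class by a Galois-fixed divisor is a class in $H^{1}(G,\divv(\mathcal{F}(p)))$, not in $V^{G}$, and since $G$ is cyclic this group is $\ker N/(1-\sigma)\divv(\mathcal{F}(p))$ with $N=1+\sigma+\cdots+\sigma^{n-1}$, which is then shown to vanish using the explicit action of $G$ on the cusps. Any repair of your argument would have to replace $V^{G}$ by this cohomology group, at which point it becomes the paper's proof.
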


\begin{proof}
  By definition, there is a short exact sequence 
 \begin{center}
     $ 0 \longrightarrow \divv ( \mathcal{F}(p))  \longrightarrow \Div_{c}(p) \longrightarrow  \calC_{1} (p) \longrightarrow 0 $
 \end{center}
We observe that the above is in fact an exact sequence of $\Gal\left( \Bar{ \Q } / \Q \right)$-modules, and we take Galois cohomology: 
\begin{center}
   $ \Div_{c}(p)\left( \Q \right) \longrightarrow \calC_{1}\left( p\right) \left( \Q \right) \longrightarrow H^{1} \left( \text{Gal}  \left( \Bar{ \Q} / \Q  \right), \divv \left( \mathcal{F} \left( p\right) \right) \right)$
    \end{center}
    and our claim follows if we show $ H^{1} \left( \text{Gal}  \left( \Bar{ \Q} / \Q  \right), \text{div} \left( \mathcal{F}_{1} \left( p\right) \right) \right) = 0$.
This follows from the fact that the absolute Galois group acts through $\text{Gal} \left( \Q \left( \zeta_{p} \right)^{+} / \Q \right)$, which is a finite cyclic subgroup. Let $\sigma$ be a generator of the Galois group, and $N = 1 + \sigma + \ldots + \sigma^{n-1}$. As shown in \cite[Chapter 10, page 189]{guillot2018gentle}
our problem reduces to proving 
\begin{center}
$ \{ D \in  \divv\left( \mathcal{F}(p) \right) \ : ND =0 \}  = \left( 1 - \sigma \right) \divv\left( \mathcal{F}(p) \right) $
\end{center}
and this is clear since the Galois action fixes the $P_{i}$ and permutes the $Q_{i} $ cyclically.
\end{proof}

Computing the subgroup $\calC_{1}^{\infty} (p) = \calC_{1} (p) \left( \Q \right)$ is of particular interest since it's closely related to the rational torsion subgroup of $J_{1}(p)(\Q)_{\tors}$. More specifically, Ohta \cite{ohta2013eisenstein} proved that the two groups are equal up to $2-$torsion.

\section{The Cuspidal Subgroup of \texorpdfstring{$J_{1}(p)$}{c}} \label{examples1}
\subsection{Abstract Presentation of the Cuspidal Group}
Given our bases for $\mathcal{F}\left( p\right)$ and $\mathcal{F}^{\infty}(p)$ we can efficiently determine  the groups  $\mathcal{C}_{1}\left( p\right)$ and $\mathcal{C}_{1}\left( p\right) \left( \Q \right)$ abstractly. In  Tables \ref{CSX1} and  \ref{RCSX1} we compute these for all primes $11 \le p \le 97$. In fact, we computed these groups for all primes $11 \le p \le 997$, and a complete list along with the \texttt{Magma} code used for this computation can be found in the online repository:
\begin{center}
 \url{https://github.com/ElviraLupoian/ModularUnitsandCuspidalSubgroupsX1p/tree/main}.
 \end{center}

\begin{longtblr}[caption = {The cuspidal subgroup $\mathcal{C}_{1} \left( p\right)$}, 
label = {CSX1}]{ |c|c|}
\hline
$p$ & $\mathcal{C}_{1} \left( p\right)$ \\
\hline
$11$  & $ \Z / 25 \Z   $ \\ 
$13$ &  $\Z/19 \Z  \times  Z/19 \Z $ \\ 
$17$ & $ \Z / 292 \Z \times \Z / 1168 \Z $ \\ 
$19 $ &  $\Z / 1461 \Z \times  \Z / 13149 \Z $  \\
$23$ & $ \Z / 37181 \Z \times \Z / 4498901 \Z $\\
$29 $ & $ \Z / 4 \Z  \times  \Z / 4 \Z  \times  \Z / 4 \Z  \times  \Z / 4 \Z  \times  \Z / 9203892 \Z \times   \Z / 450990708 \Z  $ \\
       $ 31$ & $ \Z / 2 \Z  \times  \Z / 10 \Z  \times  \Z / 1772833370 \Z  \times  \Z / 8864166850 \Z $ \\
       $ 37 $ & $ \Z / 53505562232535 \Z \times  \Z / 481550060092815 \Z  $ \\
   $ 41$ &  $ \Z / 21553759881619888 \Z \times  \Z / 538843997040497200 \Z $ \\
       $ 43 $ & $ \Z/ 2 \Z \times  \Z / 2 \Z  \times  \Z / 223364647569268558 \Z \times  
        \Z/ 10944867730894159342 \Z $ \\
    $   47 $ & $ \Z / 142605986469692740469 \Z  \times  
        \Z / 75438566842467459708101 \Z $\\
    $ 53$ & $ \Z / 14032869452244904602299329 \Z \times 
        \Z / 2371554937429388877788586601 \Z  $ \\  
    $ 59 $ & $ \Z / 589324663207792234929168861989 \Z \times  
        \Z / 495622041757753269575431012932749 \Z  $ \\
$ 61 $ &   $ \Z / 77 \Z  \times   \Z / 77 \Z \times  
        \Z / 2249026400408198764708332679163 \Z $ \\  & $ \times  \Z / 56225660010204969117708316979075 \Z $ \\
$ 67 $ &  $  \Z / 661 \Z  \times  \Z / 661 \Z  \times  
        \Z / 20742411322218610498404968504426647  \Z $ \\ & $ \times  
        \Z / 2509831769988451870307001189035624287 \Z  $ \\
$  71 $  &  $  \Z / 701 \Z  \times  \Z / 701 \Z  \times  
        \Z / 24193505826034073099187101823196730189 \Z  $ \\ & $ \times  
        \Z / 29637044636891739546504199733415994481525 \Z  $ \\
 $73 $ &  $ \Z / 2 \Z  \times  \Z / 2 \Z  \times  \Z / 2 \Z  \times  \Z / 2 \Z \times  
        \Z / 3313439439643796256465574023292013345776614 \Z $ \\   & $ \times  
        \Z / 29820954956794166308190166209628120111989526 \Z  $ \\
 $79 $ &  $ \Z / 521 \Z  \times  \Z / 521 \Z  \times  
        \Z / 2263623089554573652699188302579475498580792443 \Z $ \\ & 
        $ \Z / 382552302134722947306162823135931359260153922867 \Z   $ \\
        $83 $ & $ \Z / 2406984131025101712234550549597592650903636598642273  \Z $ \\ & $ \times  \Z / 4046140324253195978266279473873553246169013122317660913 \Z  $ \\
       $ 89 $  &  $ \Z / 2 \Z  \times  \Z / 2 \Z  \times  \Z / 2 \Z \times  \Z / 2 \Z \times $ \\ & 
      $ \Z / 1522954443020854102271958820561189448280265949250848440630\Z  $ \\ &  $ \times  \Z / 184277487605523346374907017287903923241912179859352661316230 \Z $ \\
      $  97 $ &  $ \Z / 35 \Z  \times \Z / 35 \Z  \times $   \\ &  
       $ \Z / 46112087576831945308457230271075213082193874861568739344925034980 \Z $  \\ &  $\times  
        \Z / 737793401229311124935315684337203409315101997785099829518800559680 \Z  $ \\
\hline
\end{longtblr}

\begin{longtblr}[caption = {The rational cuspidal subgroup $\mathcal{C}_{1}^{\infty} \left( p\right)$}, 
label = {RCSX1}]{ |c|c|}
\hline
$p$ & $\mathcal{C}_{1} \left( p\right) \left( \Q \right)$ \\
\hline
$11$ & $ \Z / 5 \Z $\\
$13 $ & $ \Z / 19 \Z $ \\
$17$ & $\Z / 584 \Z $ \\
$ 19$ &   $\Z / 4383 \Z $ \\
$ 23$  &  $ \Z / 408991 \Z $ \\
$ 29 $ & $ \Z / 4\Z \times  \Z / 4 \Z  \times  \Z / 64427244\Z $\\
 $ 31 $  &  $ \Z / 10 \Z  \times  \Z / 1772833370 \Z $ \\
$ 37$  &  $ \Z / 160516686697605 \Z $ \\
 $ 41 $ &  $ \Z / 107768799408099440 \Z $\\
  $43$ &  $ \Z / 2 \Z  \times  \Z / 1563552532984879906 \Z $ \\
 $ 47 $ &   $ \Z /3279937688802933030787  $\\
$ 53 $ &  $ \Z / 182427302879183759829891277 \Z $\\
 $ 59 $ &  $ \Z / 17090415233025974812945896997681 \Z $ \\
 $ 61 $ &   $ \Z /77 \Z \times  \Z /11245132002040993823541663395815 \Z $ \\
$ 67 $ &    $ \Z /661 \Z \times   \Z / 228166524544404715482454653548693117 \Z $\\
 $ 71$  &   $ \Z /701 \Z \times  \Z / 846772703911192558471548563811885556615 \Z $\\
 $ 73 $ &  $ \Z / 2\Z  \times  \Z / 2\Z  \times   \Z /9940318318931388769396722069876040037329842 \Z $\\
 $ 79 $  & $ \Z / 521 \Z  \times  \Z / 29427100164209457485089447933533181481550301759 \Z  $\\
  $ 83 $ &   $ \Z /98686349372029170201616572533501298687049100544333193 \Z $\\ 
  $ 89 $ &  $ \Z /2 \Z \times  \Z / 2\Z  \times   \Z / 16752498873229395124991547026173083931082925441759332846930 \Z $ \\
 $ 97 $ &  $ \Z /35 \Z  \times \Z / 184448350307327781233828921084300852328775499446274957379700139920 \Z $\\
 \hline 
\end{longtblr}

\subsection{Explicit Bases of Divisors for $\mathcal{C}_{1}(p)$ and $\mathcal{C}_{1}(p)(\Q)$}
We our bases of modular units to explicitly investigate the structure of the cuspidal group and its rational subgroup. More specifically, for certain $p$ in the previously stated interval, we find independent cuspidal divisors $D_{i}$ and $D_{j}'$ such that
\begin{center}
$ \mathcal{C}_{1}(p) = \langle [D_{1} ] \rangle \oplus \ldots \oplus \langle [D_{l}] \rangle$ and $ \mathcal{C}_{1}(p)(\Q ) = \langle [D^{'}_{1} ] \rangle \oplus \ldots \oplus \langle [D^{'}_{k}] \rangle $.
\end{center}
In what follows, we write $D = P_{0} - Q_{0}$ and $D' = P_{0} - P_{n-1}$.  We found that for all $ 11 \le p \le 1000$, $[D]$ and $[D']$  generate the largest factor in the abstract decomposition of the cuspidal and rational cuspidal group respectively. We study the orders of these elements in detail in the next section. 

Below, we use these divisors to find good generators for the cuspidal group. Firstly, when $\mathcal{C}_{1}(p)$ and $\mathcal{C}_{1}(p)(\Q)$ have no small factors, linear combinations of $D$ and $D'$ are sufficient to generate our groups. Analysing our calculations, we note that there are two prominent structures. This is extremely likely to be a small prime phenomena, and we expect the general structure of the cuspidal group to be significantly more complicated as the level increases, however, we hope that by studying these simpler structures, we can gain a useful insight into the general structure of the cuspidal group. 
Firstly, suppose that $l=2$ and $k=1$. More precisely, if $p$ is in the set
\begin{align*}
    &p \in \{  13, 17,19, 23, 37,41, 47, 53,59, 83, 101, 107, 137, 149, 167, 173, 179, 191, 227, 263, 269, 293, 317, \\ & 347, 359, 383, 389, 419, 479, 503, 509, 557, 563, 569, 619, 653, 719,743,773,787,797,839,857,859,863, \\ & 887,907,983 \}.
    \end{align*}
    then $\mathcal{C}_{1}(p)(\Q)$ is cyclic and $\mathcal{C}_{1}(p) \cong \Z / n_{1} \Z \times \Z / n_{2} \Z$. We explicitly found 
\begin{align*}
    & \mathcal{C}_{1}(17) =   \langle [D' + 2D] \rangle \oplus \langle [D] \rangle,\ \ \ \mathcal{C}_{1}(17)(\Q) = \langle [D'] \rangle, \\
    & \mathcal{C}_{1}(19) = \langle [ D' + 3D \rangle \oplus \langle [D] \rangle, \ \ \ \mathcal{C}_{1}(19)(\Q) = \langle [D'] \rangle, \\
    & \mathcal{C}_{1}(37) = \langle [ D' + 3D] \rangle \oplus \langle [D] \rangle, \ \ \ \mathcal{C}_{1}(11)(\Q) = \langle [D'] \rangle.    \end{align*}
For all other named primes in our interval we found that 
\begin{center}
    $ \mathcal{C}_{1}(p) =\langle [nD' + n^2D] \rangle \oplus \langle [D] \rangle$ and  $ \mathcal{C}_{1}(p)(\Q) = \langle [D'] \rangle$, 
    \end{center}
where $n = d_{1}'/d_{1} \in \Z$.

There are $33$ primes in our interval with cuspidal group and rational cuspidal group of the form:
\begin{center}
$\mathcal{C}_{1}(p) \cong \Z/n_{1} \Z \times \Z/n_{1} \Z \times \Z /n_{2} \Z \times \Z / n_{3} \Z$ and $ \mathcal{C}_{1}(p)(\Q) \cong \Z / n_{1} \Z \times \Z /m \Z$, 
\end{center}
with $n = \frac{m}{n_{2}} \in \Z$. For most of these primes, namely if 
\begin{center}
    $ p \in \{ 43, 61, 67, 71, 79, 139, 223, 229, 283, 367, 439, 467, 499, 587, 607, 643, 727, 809, 823,947 \} $
\end{center}
we found that the order of $P_{0} -P_{1} $, which we denote by $c_{1}$, is divisible by $n_{1}$ and can take the following generators:
\begin{center}
  $ \mathcal{C}_{1}(p) = \langle \frac{c_{1}}{n_{1}}[P_{0} - P_{1}] \rangle \oplus \langle \frac{c_{1}}{n_{1}}[Q_{0} - Q_{1}] \rangle  \oplus \langle [ nD' +n^{2} D ] \rangle \oplus \langle [D] \rangle $  and $\mathcal{C}_{1}(p)(\Q) = \langle \frac{c_{1}}{n_{1}}[P_{0} - P_{1}] \rangle \oplus  \langle [D'] \rangle$.
\end{center} 
It is not clear if the rest of the primes in the interval there generators of the cuspidal group which  follow a similar pattern. We summarise our computation below. We write $c_{i}$ for the order of $P_{0} - P_{i}$. We give generators of the cuspidal group $D_{1}, D_{2}, D_{3}, D_{4}$ in Table \ref{RCSX7}, and note that for all such primes, the rational cuspidal group is generated by $D_{1}$ and $ D'$.
\begin{longtblr}[caption = {Generators of orders $(n_{1}, n_{1}, n_{2}, n_{3})$ }, 
label = {RCSX7}]{ |c|c|c| c|  c| c| c|}
\hline
$p$ & $n_{1}$ & $m/n_{2} $ &  $D_{1}$ & $D_{2}$ & $D_{3}$ & $D_{4}$ \\
\hline 
$ 97$  &$35 $ &  $ 4$ & $  \frac{c_{1}}{35} (P_0 - P_{1})$ &   $ \frac{c_{1}}{35} (Q_0 - Q_{1})$ & $-D' + 4D$ & $D$ \\
\hline
$ 139$ & $1385$ & $23$ & $\frac{c_{1}}{1385} (P_{0} - P_{1}) $ & $\frac{c_{1}}{1385} (Q_{0} - Q_{1})$ & $ 7D'  + 23D$ &  $D$ \\
\hline 
$193$ & $7$ & $8$ & $\frac{c_{2}}{7}(P_{0} - P_{2})$ & $\frac{c_{2}}{7} (Q_{0} - Q_{2})$ & $23D' + 24D$ & $D$ \\
\hline 
$ 307$ & $4$ & $51$ & $ \frac{c_{1}}{4} (P_{0} - P_{1})$ & $ \frac{c_{1}}{4} (Q_{0} - Q_{1})$ & $ 11D' + 51D$ & $D $ \\
\hline 
$449$ & $ 493$ & $56 $ & $ \frac{c_{1}}{493} (P_{0} - P_{1})$ & $ \frac{c_{1}}{493} (Q_{0} - Q_{1})$ & $ 13D' - 56D$ & $D $ \\
\hline 
$613$ & $ 33877$ & $51 $ & $ \frac{c_{1}}{33877} (P_{0} - P_{1})$ & $ \frac{c_{1}}{33877} (Q_{0} - Q_{1})$ & $ 2D' + 51D$ & $D$ \\
\hline 
$757$ & $ 1157087$ & $63 $ & $ \frac{c_{1}}{n_{1}} (P_{0} - P_{1})$ & $ \frac{c_{1}}{n_{1}} (Q_{0} - Q_{1})$ & $ -D' + 63D$ & $D $ \\
\hline 
$769$ & $ 409961464315$ & $32$ & $ \frac{c_{1}}{n_{1}} (P_{0} - P_{1})$ & $ \frac{c_{1}}{n_{1}} (Q_{0} - Q_{1})$ & $ -19D' + 32D$ & $D $ \\
\hline 
$829$ & $ 695$ & $69 $ & $ \frac{c_{1}}{695} (P_{0} - P_{1})$ & $ \frac{c_{1}}{695} (Q_{0} - Q_{1})$ & $ 31D' - 69D$ & $D$ \\
\hline 
$977$ & $69$ & $122$ &$ \frac{c_{1}}{69} (P_{0} - P_{1})$& $ \frac{c_{1}}{69} (Q_{0} - Q_{1})$ & $997D' + 854D$  & $D$ \\
\hline 
$991$ & $397$ & $165$ &$ \frac{c_{1}}{397} (P_{0} - P_{1})$& $ \frac{c_{1}}{397} (Q_{0} - Q_{1})$ & $59D' + 165D$  & $D$ \\
\hline 
\end{longtblr}
 If $p \in \{ 157, 877\}$ we found the following generators,
\begin{align*}
    & \mathcal{C}_{1}(157)= \langle [ P_0 -P_1], [Q_0 - Q_1], [13(P_0 - P_{n-1}) + 26(P_0 - Q_0)], [P_0 - Q_0]\rangle;\\
    &  \mathcal{C}_{1}(157)(\Q) = \langle [ P_{0}- P_{1}], [P_{0} - P_{n-1}] \rangle; \\
    & \mathcal{C}_{1}(877)= \langle [ P_0 -P_1], [Q_0 - Q_1], [-9(P_0 - P_{n-1}) + 73(P_0 - Q_0)], [P_0 - Q_0]\rangle; \\
    & \mathcal{C}_{1}(877)(\Q) = \langle [ P_{0}- P_{1}], [P_{0} - P_{n-1}] \rangle.
    \end{align*}
However, observe that these generators are not independent. In these examples we were unable to find independent generators with predictable (and small) coefficients.

\section{Orders of Cuspidal Divisors} \label{ordersofcusps}
In this section we give explicit formulae for the orders of the elements $D = P_{0} - Q_{0}$ and $D' = P_{0} - P_{n-1}$ in $J_{1}(p)$. Computing these involves working with certain \textit{circulant matrices} whose entries correspond to the orders of vanishing of the functions $E_{i}$ and $F_{j}$ at cusps. In the first subsection we give some general results on circulant matrices. In the final two subsections we apply these results to compute the orders of $D$ and $D'$.

\subsection{Circulant Matrices}
Let $k \in \Z_{>0}$ and $\mathbf{c} = (c_{1}, \ldots, c_{k}) \in \mathbb{C}^{k}$. The \textit{circulant matrix} defined by $\mathbf{c}$ is the $k \times k $ matrix
\begin{center}
    $\Circ(\mathbf{c}) := \begin{bmatrix}
        c_{1} & c_{2} & \ldots & c_{k-1} & c_{k} \\
        c_{2} & c_{3} & \ldots & c_{k}  & c_{1} \\
        \vdots & \vdots & \ddots & \vdots & \vdots \\
        c_{k} & c_{1} & \ldots & c_{k-2} & c_{k-1}
    \end{bmatrix}
    $
\end{center}
and the \textit{alternative circulant matrix} defined by $\mathbf{c}$ is the $k \times k $ matrix:
\begin{center}
    $ \Circa(\mathbf{c}) := \begin{bmatrix}
        c_{1} & c_{2} & \ldots & c_{k-1} & c_{k} \\
        c_{k} & c_{1} & \ldots & c_{k-2}  & c_{k-1} \\
        \vdots & \vdots & \ddots & \vdots & \vdots \\
        c_{2} & c_{3} & \ldots & c_{k} & c_{1}
    \end{bmatrix}.
    $
\end{center}
The two matrices are related by $\Circ(\mathbf{c}) = J \Circa( \mathbf{c})$, where $J$ is defined as 

\begin{center}
    $J = \Circ((1,0\ldots, 0)) = \begin{bmatrix}
        1 & 0 & \ldots & 0 & 0 \\
        0 & 0 & \ldots & 0 & 1 \\
        0 & 0 & \ldots & 1 & 0 \\
        \vdots & \vdots & \ddots & \vdots & \vdots \\
        0 & 1 & \ldots & 0 & 0
    \end{bmatrix}$.
\end{center}
The following result is proved by an elementary calculation. The author believes it to be well-known, but could not find a suitable reference and thus gives a short proof the reader's convenience. 
\begin{prop} \label{evals}
 Fix a primitive $k$th root of unity $\omega$. For $1 \le i \le k$, let $e_{i} = \displaystyle \sum_{j=1}^{k} (\omega^{i-1})^{j-1}c_{j} $ and $\mathbf{e}_{i} = ( 1, \omega^{i-1}, \ldots, (\omega^{i-1})^{k-1})^{T}$. Then $e_{i}$ is an eigenvalue for $\Circa(\mathbf{c})$ and $\mathbf{e}_{i}$ is an associated eigenvector. 
\end{prop}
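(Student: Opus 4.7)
The plan is to prove the statement by a direct matrix-vector computation, exploiting the cyclic structure of $\Circa(\mathbf{c})$ together with the identity $\omega^k = 1$. Since this is the standard eigenvalue formula for a circulant matrix, the heart of the argument is just careful bookkeeping of indices; there is no real obstacle beyond making sure the indexing convention of $\Circa$ (as opposed to the ordinary $\Circ$) is handled correctly.

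First, I would record the entries of $\Circa(\mathbf{c})$ explicitly. Reading off the definition, the row $r$ is obtained from the row $r-1$ by a cyclic shift one position to the right; equivalently, setting the indexing convention $c_{s} = c_{s+k}$ for all $s \in \Z$, the entry in row $r$ and column $j$ is
\begin{equation*}
\Circa(\mathbf{c})_{r,j} = c_{\,j - (r-1)\,}, \qquad 1 \le r,j \le k,
\end{equation*}
with the subscript reduced modulo $k$ into the range $\{1,\ldots,k\}$. A quick sanity check against rows $1$, $2$, and $k$ confirms the formula.

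Next I would fix $i$, set $\zeta = \omega^{i-1}$, and compute the $r$-th component of $\Circa(\mathbf{c})\mathbf{e}_i$. By the formula above,
\begin{equation*}
\bigl(\Circa(\mathbf{c})\mathbf{e}_i\bigr)_r = \sum_{j=1}^{k} c_{j-(r-1)}\,\zeta^{j-1}.
\end{equation*}
Substituting $m = j - (r-1)$ (again reduced mod $k$), so that $j - 1 \equiv (m - 1) + (r-1) \pmod{k}$ and $\zeta^{k} = \omega^{(i-1)k} = 1$, the sum becomes
\begin{equation*}
\sum_{m=1}^{k} c_{m}\,\zeta^{(m-1)+(r-1)} \;=\; \zeta^{r-1}\sum_{m=1}^{k} c_{m}\,\zeta^{m-1} \;=\; \zeta^{r-1}\, e_i.
\end{equation*}
Since $\zeta^{r-1} = (\omega^{i-1})^{r-1}$ is precisely the $r$-th coordinate of $\mathbf{e}_i$, this shows $\Circa(\mathbf{c})\mathbf{e}_i = e_i\,\mathbf{e}_i$, which is exactly the claim. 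The only subtle point is the reduction-mod-$k$ step in the reindexing, which is legitimised by $\zeta^k = 1$; beyond that the proof is entirely mechanical.
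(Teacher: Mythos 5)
Your proof is correct and follows essentially the same route as the paper: a direct computation of the $r$-th entry of $\Circa(\mathbf{c})\mathbf{e}_i$, reindexing the sum (you via reduction mod $k$ justified by $\zeta^k=1$, the paper by explicitly splitting the sum into the wrapped and unwrapped ranges) to extract the factor $(\omega^{i-1})^{r-1}e_i$. Your indexing formula $\Circa(\mathbf{c})_{r,j}=c_{j-(r-1)}$ matches the paper's definition, so the argument is complete.
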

\begin{proof}
 The $jth$ entry of $\Circa(\mathbf{c}) \cdot \mathbf{e}_{i}$ is 
 \begin{center}
    $ \displaystyle \sum_{s=1}^{j-1} c_{k-j+1 +s} (\omega^{i-1})^{s-1} + \displaystyle \sum_{s=j}^{k} c_{s+1-j} (\omega^{i-1})^{s-1} = \displaystyle \sum_{t=k+2-j}^{k} c_{t} (\omega^{i-1})^{t -k -2 +j} + \displaystyle \sum_{t=1}^{k+1-j} c_{t} (\omega^{i-1})^{j-1} = e_{i}(\omega^{i-1})^{j-1}$.  
 \end{center}
\end{proof}
From now on we write $J_{k,a}$ for the $k \times k $ matrix whose entries are all $a$. If $M$ is any $k \times k$ matrix, we write $\Bl(M,a)$ for the $2k \times 2k$ block matrix 
\begin{center}
    $\Bl(M,a)= \begin{bmatrix}
        M & J_{k,a} \\
        J_{k,a} & M 
    \end{bmatrix}$.
\end{center}
The following is an elementary calculation, whose proof is very similar to that of the previous result.
\begin{prop} \label{diagbl}
 Let $\mathbf{c}= (c_{1}, \ldots, c_{k}) \in \mathbb{C}^{k}$, $a \in \mathbb{C}$ and fix $\omega$ a primitive $k$th root of unity. 
 The pairs $(e_{i}, \mathbf{e}_{i})$ stated below are a complete eigensystem for the matrix $\Bl(\Circa(\mathbf{c}), a)$,
 \begin{itemize}
     \item $e = \displaystyle \sum_{t=1}^{k} c_{t} + ka $, $\mathbf{e} = (1, \ldots, 1)^{T}$;
     \item $e = \displaystyle \sum_{t=1}^{k} c_{t}  - ka $, $\mathbf{e} = (1, \ldots,1, -1, \ldots, -1)^{T}$;
     \item for $2 \le i \le k$, $e = \displaystyle \sum_{t=1}^{k} c_{t}(\omega^{i-1})^{t-1}$, $\mathbf{e} = (1, \omega^{i-1}, \ldots, (\omega^{i-1})^{k-1},1, \omega^{i-1}, \ldots, (\omega^{i-1})^{k-1})^{T}$;
     \item for $2 \le i \le k$, $e = \displaystyle \sum_{t=1}^{k} c_{t}(\omega^{i-1})^{t-1}$, $\mathbf{e} = (1, \omega^{i-1}, \ldots, (\omega^{i-1})^{k-1},0,0 , \ldots,0)^{T}$;   \end{itemize}
\end{prop}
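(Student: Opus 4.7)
The plan is to decompose the action of $\Bl(\Circa(\mathbf{c}),a)$ into actions of its two $k\times k$ building blocks, $A=\Circa(\mathbf{c})$ and $B=J_{k,a}$, and exploit the fact that the eigenvectors of $\Circa(\mathbf{c})$ produced in Proposition \ref{evals} are also eigenvectors of $B$. The key preliminary observation is that $J_{k,a}=a\mathbf{1}\mathbf{1}^{T}$ is of rank one, with
\[
J_{k,a}\,\mathbf{e}_{i}=a\sum_{j=1}^{k}(\omega^{i-1})^{j-1}\cdot\mathbf{1}.
\]
By the standard geometric-sum computation this evaluates to $ka\,\mathbf{1}=ka\,\mathbf{e}_{1}$ when $i=1$, and to $\mathbf{0}$ for all $2\le i\le k$. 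Combined with $A\mathbf{e}_{i}=e_{i}\mathbf{e}_{i}$ from Proposition \ref{evals}, this gives everything we need.

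Next I would verify the four asserted eigenpairs one by one, using the block action
\[
\Bl(A,J_{k,a})\begin{pmatrix}\mathbf{v}\\\mathbf{w}\end{pmatrix}=\begin{pmatrix}A\mathbf{v}+J_{k,a}\mathbf{w}\\J_{k,a}\mathbf{v}+A\mathbf{w}\end{pmatrix}.
\]
For the first two items, take $\mathbf{v}=\mathbf{w}=\mathbf{e}_{1}$ and $\mathbf{v}=-\mathbf{w}=\mathbf{e}_{1}$ respectively, and substitute $A\mathbf{e}_{1}=(\sum_{t}c_{t})\mathbf{e}_{1}$ together with $J_{k,a}\mathbf{e}_{1}=ka\,\mathbf{e}_{1}$ to read off eigenvalues $\sum c_{t}\pm ka$. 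For the third and fourth items, take $i\ge2$, for which $J_{k,a}\mathbf{e}_{i}=\mathbf{0}$; then both $(\mathbf{e}_{i},\mathbf{e}_{i})^{T}$ and $(\mathbf{e}_{i},\mathbf{0})^{T}$ are sent to $e_{i}$ times themselves, since the off-diagonal $J_{k,a}$ contributions vanish on $\mathbf{e}_{i}$.

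It remains to check completeness, i.e.\ that the listed $2k$ vectors are linearly independent and hence form a basis of $\mathbb{C}^{2k}$. The two vectors in items 1 and 2 span the two-dimensional subspace spanned by $(\mathbf{e}_{1},\mathbf{0})^{T}$ and $(\mathbf{0},\mathbf{e}_{1})^{T}$; and for each fixed $i\ge2$, the pair $(\mathbf{e}_{i},\mathbf{e}_{i})^{T},(\mathbf{e}_{i},\mathbf{0})^{T}$ spans $\mathrm{span}\{(\mathbf{e}_{i},\mathbf{0})^{T},(\mathbf{0},\mathbf{e}_{i})^{T}\}$. Since $\mathbf{e}_{1},\ldots,\mathbf{e}_{k}$ form a basis of $\mathbb{C}^{k}$ (they are the columns of the DFT matrix), the $2k$ listed vectors span $\mathbb{C}^{2k}$.

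No step here is really an obstacle — the proof is essentially bookkeeping, and the only subtle point is that the fourth family uses the asymmetric choice $(\mathbf{e}_{i},\mathbf{0})^{T}$ rather than $(\mathbf{e}_{i},-\mathbf{e}_{i})^{T}$; this is legitimate because the eigenspace of $\Bl(A,J_{k,a})$ at eigenvalue $e_{i}$ (for $i\ge 2$) is two-dimensional and contains both of these vectors, so either choice yields a valid basis.
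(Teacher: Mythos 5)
Your proof is correct, and it is essentially the argument the paper has in mind: the paper omits the proof, remarking only that it is "an elementary calculation... very similar to" Proposition \ref{evals}, i.e.\ a direct verification that each listed vector is an eigenvector, which is exactly what you carry out (your organisation via $J_{k,a}=a\mathbf{1}\mathbf{1}^{T}$ annihilating $\mathbf{e}_{i}$ for $i\ge 2$ is just a clean way to package that computation). The completeness check via linear independence of the $\mathbf{e}_{i}$ is also the right, and needed, finishing touch.
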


\subsection{Orders of Rational Cuspidal Divisors}
The arguments appearing in this section are heavily inspired by those from \cite[Section 3]{yoo2023rational} and \cite[Section 2]{ling1997q}.

 Recall that any element in $\mathcal{C}_{1}(p)(\mathbb{Q})$ is represented by a divisor supported on $P_{0}, \ldots, P_{n-1}$. Define the following sets:
\begin{align*}
   & \mathcal{S}_{1} = \{ \displaystyle \prod_{i=0}^{n-1} E_{i} ( \tau )^{e_{i}} \  \vert  \ e_{i} \in \Q ,  \displaystyle \sum_{i=0}^{n-1} e_{i} =0 \}; \\  
    & \mathcal{S}_{2} = \{ \displaystyle \sum_{i=0}^{n-1} d{i} P_{i} ( \tau )^{e_{i}} \  \vert  \ d_{i} \in \Q ,  \displaystyle \sum_{i=0}^{n-1} d_{i} =0 \}. \\  
    \end{align*}
Let $\varphi$ be a homomorphism on $\mathcal{S}_{1}$, defined by 
\begin{center}
    $\varphi(\prod_{i=0}^{n-1} E_{i} ( \tau )^{e_{i}}) = \displaystyle \sum_{i=0}^{n-1} k_{i} P_{i}$
\end{center}
where $k_{i} = \displaystyle \sum_{j=0}e_{j}a_{[i+j]}$, with $[l+k] := l+k \mod{n}$ and $a_{i} = \frac{p}{2}B_{2}( \{ \frac{\alpha^{i}}{p} \} $ as before. First, we note that the image of $\varphi$ lies in $\mathcal{S}_{2}$, since 
\begin{center}
    $ \displaystyle \sum_{i=0}^{n-1} k_{i} = \left( \displaystyle \sum_{j=0}^{n=1} e_{j} \right) \times \left( \displaystyle \sum_{l=0}^{n=1} a_{l} \right) = 0 $. 
\end{center}

Moreover, this is in fact a bijection; and we argue this below. We begin by identifying both $\mathcal{S}_{1}$ and $\mathcal{S}_{2}$ with the following set
\begin{align*}
   & S = \{ ( k_{0}, k_{1}, \ldots, k_{n-1})^{T}  \vert  \ k_{i} \in \Q ,  \displaystyle \sum_{i=0}^{n-1} k_{i} =0 \}.
    \end{align*}
We simply view $\varphi$ as the function $S \rightarrow S$ given by the circulant matrix $M = \Circ(a_{0}, \ldots, a_{n-1})$, acting on the vector corresponding to the exponents. To prove that $\varphi$ is a bijection, we show that $M$ is invertible and explicitly compute its inverse. 

For the rest of this section we fix $\omega$, a primitive $n$th root of unity. For $0 \le i \le n-1$ we  write $\chi_{i}$ for the even Dirichlet character modulo $p$ satisfying $\chi_{i}(\alpha) = \omega^{i}$ and $B_{2,i} = B_{2,\chi_{i}} = p \sum_{a=1}^{p-1} \chi_{i}(a)B_{2}( \frac{a}{p})$ for the corresponding generalised Bernoulli number. Moreover, we write  
\begin{center}
    $b_{j} = \frac{4}{n} \displaystyle \sum_{i=0}^{n-1} (\omega^{ji} \cdot B_{2,i})^{-1}$
    \end{center}
for $0 \le j \le n-1$.
\begin{prop}
  The matrix $M$ is invertible, with inverse
  \begin{center}
    $ M^{-1}=  \Circ( b_{0}, b_{n-1},  \ldots , b_{1} )$.
    \end{center}
\end{prop}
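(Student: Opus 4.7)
The plan is to verify $M M^{-1} = I$ by direct computation, reducing everything to a single Fourier-type identity for the $a_l$.

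Writing entries $0$-indexed, one has $M_{ij} = a_{(i+j) \bmod n}$, and the proposed inverse $N := \Circ(b_0, b_{n-1}, \ldots, b_1)$ satisfies $N_{ij} = b_{(-i-j) \bmod n}$. Expanding the product and substituting $l = (i+k) \bmod n$ for the summation index, one finds
\begin{equation*}
(MN)_{ij} = \sum_{l=0}^{n-1} a_l\, b_{(m-l) \bmod n}, \qquad m := (i-j) \bmod n,
\end{equation*}
so the problem reduces to proving the single identity $\sum_{l=0}^{n-1} a_l\, b_{(m-l)\bmod n} = \delta_{m,0}$ for every $m \in \{0, \ldots, n-1\}$.

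The crux is the \emph{Fourier identity}
\begin{equation*}
\sum_{l=0}^{n-1} a_l\, \omega^{il} = \frac{1}{4} B_{2,i} \qquad (0 \le i \le n-1).
\end{equation*}
I would prove this by noting that $\{\alpha^l \bmod p : 0 \le l \le n-1\}$ is a complete set of representatives for $(\Z/p\Z)^\times / \{\pm 1\}$, so the defining sum $B_{2,i} = p \sum_{a=1}^{p-1} \chi_i(a) B_2(a/p)$ splits into pairs $\{\alpha^l \bmod p,\ p - \alpha^l \bmod p\}$. Each pair contributes equally because $\chi_i$ is even and because of the symmetry $B_2(1-x) = B_2(x)$, so substituting $a_l = \frac{p}{2} B_2(\{\alpha^l/p\})$ and $\chi_i(\alpha^l) = \omega^{il}$ yields the stated identity.

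Substituting the definition of $b_j$ and exchanging the order of summation then gives
\begin{equation*}
\sum_{l=0}^{n-1} a_l\, b_{(m-l)\bmod n} = \frac{4}{n} \sum_{i=0}^{n-1} \frac{\omega^{-mi}}{B_{2,i}} \sum_{l=0}^{n-1} a_l\, \omega^{il} = \frac{1}{n} \sum_{i=0}^{n-1} \omega^{-mi} = \delta_{m,0},
\end{equation*}
by the Fourier identity and the orthogonality of $n$th roots of unity. For the manipulation to make sense every $B_{2,i}$ must be nonzero; this is guaranteed by Theorem \ref{cuspidalclassnumberX1p} for $i \ne 0$ (since $h_1(p)$ is a positive integer), and by Lemma \ref{columnsum} combined with the Fourier identity for $i = 0$ (giving $B_{2,0} = -n/3$). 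The only delicate point is the Fourier identity itself; the rest is bookkeeping, and I do not anticipate any serious obstacle.
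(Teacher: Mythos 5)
Your proof is correct and takes essentially the same route as the paper: both rest on the discrete Fourier diagonalisation of the circulant and the identity $\sum_{l} a_{l}\omega^{il} = \tfrac{1}{4}B_{2,i}$ (the paper's eigenvalue computation), with your convolution check $MN=I$ being precisely the ``long but elementary matrix multiplication'' the paper alludes to. Your handling of the nonvanishing of $B_{2,0}$ via Lemma \ref{columnsum} is in fact slightly more careful than the paper's remark that the eigenvalues are factors of the cuspidal class number.
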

\begin{proof}
We write $\mathbf{a} = (a_{0}, \ldots, a_{n-1})^{T}$. Recall that $\Circ(\mathbf{a}) = J\Circa(\mathbf{a})$ as in the previous subsection, and it follows from Proposition \ref{evals} that $\Circa(\mathbf{a}) = P^{-1} \Diag(y_{0}, \ldots, y_{n-1})P$ where:
\begin{center}
    $P = \begin{bmatrix}
    1  &  1 & \dots & 1 &  1  \\
         1 & \omega & \dots & \omega^{n-2} & \omega^{n-1}  \\
          
           \vdots & \vdots & \ddots & \vdots & \vdots \\
         1 & \omega^{n-1} & \dots & (\omega^{n-1})^{n-2} & ( \omega^{n-1})^{n-1}  \\            \end{bmatrix} $,
          $P^{-1} =  \frac{1}{n} \cdot \begin{bmatrix}
    1  &  1 & \dots & 1 &  1  \\
         1 & \omega^{-1} & \dots & \omega^{-n+2} & \omega^{1-n}  \\
          
           \vdots & \vdots & \ddots & \vdots & \vdots \\
         1 & \omega^{-n} & \dots & (\omega^{1-n})^{n-2} & ( \omega^{1-n})^{n-1}  \\            \end{bmatrix} $.           
         \end{center}
and $y_{i}$ are the eigenvalues, 
\begin{center}
    $y_{i} := \displaystyle \sum_{j=0}^{n-1} (\omega^{i})^{j} a_{j}  = \displaystyle  \frac{p}{2} \sum_{j=0}^{n-1} \chi_{i}( \alpha^{j}) B_{2}( \{ \frac{\alpha^{j}}{p} \}) = \frac{1}{4}B_{2,i}$.
    \end{center}
Note that $y_{i} \neq 0 $ for all $i$, since they are all factors of the cuspidal class number. The result then follows after a long, but elementary matrix multiplication.
\end{proof}

To compute the order of element in $\mathcal{C}_{1}(p)( \Q)$ we use the following result.
\begin{prop} 
\label{X1pcongs2}
Let $e_{i} \in  \Q$ and $
    f ( \tau) = \displaystyle \prod_{i=0}^{n-1} E_{i}(\tau)^{e_{i}}$.  Then $f \in \mathcal{F}^{\infty}(p)$  if and only if the exponents satisfy the following congruence conditions:
\begin{itemize}
\item[(i)] $ e_{i} \in \Z$ for all $i$;
    \item[(ii)] $ \displaystyle \sum_{i=0}^{n-1} e_{i} \alpha^{2i}  \equiv 0 \mod{p};$
    \item[(iii)] $ \displaystyle \sum_{i=0}^{n-1} e_{i} =0$.
\end{itemize}
\end{prop}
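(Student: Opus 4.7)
The plan is to bootstrap from the integer-exponent version (Proposition \ref{X1pcongs}) and the order-of-vanishing formulas (Proposition \ref{ordsvansX1}), with the basis result Theorem \ref{basis2} used to force the integrality of the exponents.

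For sufficiency, assume (i)--(iii). Proposition \ref{X1pcongs} applies to $f$ with all $f_j = 0$: the first congruence there is (ii), the second is vacuous, and the third, $\sum p e_i \equiv 0 \pmod{12}$, is immediate from (iii). Hence $f \in \mathcal{F}(p)$, and then Proposition \ref{ordsvansX1} gives $\ord_{Q_i}(f) = \sum_j e_j/12 = 0$, so $\divv(f)$ is supported on $P_0, \ldots, P_{n-1}$ and $f \in \mathcal{F}^\infty(p)$.

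For necessity, suppose $f \in \mathcal{F}^\infty(p)$. Condition (iii) is immediate from $\ord_{Q_0}(f) = \sum_j e_j/12 = 0$. The main step is (i). By Theorem \ref{basis2} we may write $f = c \prod_{k=1}^{n-1} I_k^{m_k}$ for some constant $c$ and integers $m_k$. Since each $I_k$ is an explicit product of $E_j$'s with integer exponents $c_{kj}$, this equals $c \prod_j E_j^{\kappa_j}$ with $\kappa_j := \sum_k m_k c_{kj} \in \Z$. Comparing with $f = \prod_j E_j^{e_j}$ as functions on $\mathbb{H}$ gives
\begin{equation*}
    \prod_{j=0}^{n-1} E_j(\tau)^{e_j - \kappa_j} = c.
\end{equation*}
The right-hand side is a constant on $X_1(p)$ and thus has zero order of vanishing at every cusp. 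The order formulas of Proposition \ref{ordsvansX1} are linear in the exponents and remain valid for rational exponents (they are read off the leading $q$-term of each Siegel function together with the transformation law of Proposition \ref{trans2}); applying them to the left-hand side yields $M \boldsymbol\epsilon = \mathbf{0}$ and $\mathbf{1}^{T} \boldsymbol\epsilon = 0$, where $\boldsymbol\epsilon := (e_j - \kappa_j)_j$ and $M = \Circ(a_0, \ldots, a_{n-1})$. The matrix $M$ is invertible, as shown in the proposition just above (its eigenvalues $-n/12$ and $\tfrac{1}{4} B_{2,\chi}$ for even non-principal Dirichlet characters $\chi$ mod $p$ are all nonzero factors of the cuspidal class number $h_1(p)$). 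Hence $\boldsymbol\epsilon = \mathbf{0}$ and $e_j = \kappa_j \in \Z$. Condition (ii) now follows by applying Proposition \ref{X1pcongs} to $f$ with integer exponents.

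The main obstacle is justifying the extension of the order formulas to rational exponents. This is a bookkeeping check with $q$-expansions: $g_\mathbf{a}(\tau)$ has leading $q$-order $\tfrac{1}{2} B_2(\{a_1\})$ up to a nonzero constant, so any formal product $\prod_j E_j^{\epsilon_j}$ has a well-defined (possibly fractional) leading $q$-order depending linearly on the $\epsilon_j$, both at $i\infty$ and at the image of any other cusp under an $\SL_2(\Z)$-translate. Equating these formal orders with the zero orders of the constant $c$ produces the linear system used above.
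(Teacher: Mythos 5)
Your proof is correct, and its sufficiency direction and derivations of (ii) and (iii) coincide with the paper's; the genuine divergence is in the integrality condition (i). The paper handles (i) by a $q$-expansion argument: it writes $f=\tilde f^{1/N}$ with $\tilde f$ a product of the $E_i$ with integer exponents, notes that $\tilde f$ has a $q$-expansion with bounded denominators, and concludes via Ogg's lemma. You instead invoke Theorem \ref{basis2} to write $f=c\prod_k I_k^{m_k}$, hence $f=c\prod_j E_j^{\kappa_j}$ with $\kappa_j\in\Z$, and then force $e_j=\kappa_j$ from the invertibility of the circulant matrix $M=\Circ(a_0,\ldots,a_{n-1})$, which the paper establishes in the (unlabelled) proposition immediately preceding this one. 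Your route keeps the argument entirely internal to the paper's own machinery and avoids the bounded-denominator step; there is no circularity, since Theorem \ref{basis2} is proved by comparison with Yang's basis, independently of this proposition. The extra cost is the extension of the order formulas of Proposition \ref{ordsvansX1} to rational exponents, which you correctly reduce to the linearity of leading $q$-orders (equivalently, comparing growth rates of $\log\lvert\cdot\rvert$ at each cusp); note the paper's own derivation of (iii) tacitly needs the same extension, since it computes $\ord_{Q_j}(f)=\sum_i e_i/12$ before integrality is known. One small imprecision: the eigenvalue $-n/12$ of $M$ is not literally a factor of $h_1(p)$ --- what matters is only that it and the $\tfrac14 B_{2,\chi}$ are all nonzero, which is exactly what the preceding proposition records.
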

\begin{proof}
The reverse implication follows from Proposition \ref{X1pcongs}.  If such an $f$ is a modular unit and the exponents are integers, the second congruence holds by Proposition \ref{X1pcongs}. The last condition holds since the divisor of such a function has order of vanishing $\displaystyle \sum_{i=0}^{n-1} \frac{1}{12} e_{i}$ at all cusps $Q_{j}$, and this is assumed to be zero. Thus, it remains to show that the exponents are necessarily integers. For this we argue as in  \cite[Section 4]{ogg1974hyperelliptic}. More precisely, $f = (\Tilde{f})^{1/n}$ where $n \ge 1$ and $\Tilde{f}$ is a product of the $E_{i}$ with integer exponents. Then $\Tilde{f}$ is a power series in $q = e^{2\pi i \tau}$ with rational coefficients, with bounded denominators, and we conclude using Ogg's lemma from the same reference.   
\end{proof}

Let $D = \displaystyle \sum_{i=0}^{n-1} d_{i}P_{i} \in \Divv^{\infty}(p)$, then by the Manin-Drinfeld theorem there exists a minimal $N \in \Z \setminus\{0\}$, the order of $[D]$, such that 
\begin{center}
    $\divv(f) = N \displaystyle \sum_{i=0}^{n-1} d_{i}P_{i}$,
\end{center}
and since $N \sum_{i=0}^{n-1} d_{i}  =0$ and $\varphi$ is invertible, up to scaling, $f \in \mathcal{S}_{1}$, with integer exponents satisfying the congruences of Proposition \ref{X1pcongs2}. Note that this implication  also follows from the explicit basis computed in Theorem \ref{basis2}. Therefore, we have the following. 
\begin{prop}
For $D = \displaystyle \sum_{i=0}^{n-1} d_{i}P_{i} \in \Divv^{\infty}(p)$, the order of $[D] \in J_{1}(p)(\Q)_{\tors} $ is the smallest positive integer $N$ such that 
\begin{center}
    $M^{-1} \cdot ( Nd_{0}, \ldots, Nd_{n-1})^{T} $
\end{center}
satisfy the necessary and sufficient conditions of Proposition \ref{X1pcongs2}.
\end{prop}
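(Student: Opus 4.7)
The plan is to combine three ingredients already developed: (i) the bijection $\varphi \colon \mathcal{S}_{1} \to \mathcal{S}_{2}$ given by the circulant matrix $M$, whose explicit inverse has just been computed; (ii) the Manin--Drinfeld theorem, which guarantees that $[D]$ is torsion; and (iii) the characterisation of $\mathcal{F}^{\infty}(p)$ among products $\prod E_{i}^{e_{i}}$ provided by Proposition \ref{X1pcongs2}.

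First I would observe that, by Manin--Drinfeld, the minimal positive integer $N$ killing $[D]$ is characterised as the smallest $N>0$ for which there exists $f \in \mathcal{F}^{\infty}(p)$ with $\divv(f) = N \cdot D = \sum_{i=0}^{n-1} (Nd_{i}) P_{i}$. Since $f \in \mathcal{F}^{\infty}(p)$ has divisor supported on the $P_{i}$ only, its order of vanishing at every $Q_{j}$ is zero; together with Theorem \ref{basis2}, this shows that, modulo constants, $f$ can be written as a product $\prod_{i=0}^{n-1} E_{i}(\tau)^{e_{i}}$ with $e_{i} \in \Z$, so $f \in \mathcal{S}_{1}$.

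Next, by the definition of $\varphi$ and Proposition \ref{ordsvansX1}, the relation $\divv(f) = \sum (Nd_{i})P_{i}$ translates to the linear equation
\begin{equation*}
M \cdot (e_{0}, \ldots, e_{n-1})^{T} \;=\; N \cdot (d_{0}, \ldots, d_{n-1})^{T}.
\end{equation*}
Since $M$ is invertible, the exponent vector is forced to be
\begin{equation*}
(e_{0}, \ldots, e_{n-1})^{T} \;=\; M^{-1} \cdot N (d_{0}, \ldots, d_{n-1})^{T}.
\end{equation*}
Thus the existence of a valid $f$ at a given $N$ is equivalent to the vector on the right satisfying the necessary and sufficient conditions of Proposition \ref{X1pcongs2}: namely that its entries are integers and that $\sum e_{i}\alpha^{2i} \equiv 0 \pmod{p}$ (the third condition $\sum e_{i}=0$ is automatic, since $\sum d_{i}=0$ and $M^{-1}$ preserves the zero-sum hyperplane, as one sees directly from the fact that $M$ does so by Lemma \ref{columnsum}). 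The order of $[D]$ is then exactly the least such $N>0$, which is the claimed statement.

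The only step that needs a little care is the uniqueness of the candidate $f$ modulo constants, so that we may legitimately identify the condition ``$N$ kills $[D]$'' with the arithmetic condition on $M^{-1}\cdot N(d_{0},\dots,d_{n-1})^{T}$. This follows from injectivity of $\divv$ on $\mathcal{F}(p)/\mathbb{C}^{\times}$, already noted at the start of Section \ref{basissec}, together with the bijectivity of $\varphi$. I do not foresee any serious obstacle: the proof is essentially a translation of the torsion condition through the invertible matrix $M$, and the main content lies in the explicit formula for $M^{-1}$ established just above.
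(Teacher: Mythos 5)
Your proposal is correct and follows essentially the same route as the paper: invoke Manin--Drinfeld to reduce to finding the least $N$ with $\divv(f)=N\cdot D$ for some $f\in\mathcal{F}^{\infty}(p)$, use Theorem \ref{basis2} (equivalently the invertibility of $\varphi$) to force $f$ to be a product $\prod E_{i}^{e_{i}}$ with integer exponents, and then solve $M\cdot(e_{i})=N\cdot(d_{i})$ via $M^{-1}$ subject to the conditions of Proposition \ref{X1pcongs2}. Your extra remarks (that $M^{-1}$ preserves the zero-sum hyperplane, and that injectivity of $\divv$ modulo constants pins down $f$) are correct and only make explicit what the paper leaves implicit.
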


We apply this proposition to $D' = P_{0} - P_{n-1}$. 
\begin{prop}
With notation as above, define 
\begin{itemize}
    \item $c_{0} = b_{0} - b_{1}$;
    \item $c_{1} = b_{n-1} - b_{0}$;
    \item for $2 \le i \le n-1$, $c_{i} = b_{n-i} - b_{n+1-i}$.
\end{itemize}
Then $c_{j} \in \Q$ for all $0 \le j \le n-1$. Let $L$ be the lowest common multiple of the denominators of $c_{j}$ and $T := \displaystyle \sum_{i=0}^{n-1} Lc_{i} \alpha^{2i}$. The order of $[ P_{0} - P_{n-1}] \in \mathcal{C}_{1}(p)(\Q)$ is $N = \frac{p}{\gcd(p,T)} L$.
\end{prop}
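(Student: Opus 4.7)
The plan is to apply the preceding proposition directly to the coefficient vector $\mathbf{d} = (1, 0, \ldots, 0, -1)^{T}$ representing $P_{0} - P_{n-1}$. First I would compute $M^{-1} \mathbf{d}$ entry by entry. Since $\mathbf{d}$ has only two nonzero entries, the $i$-th component of $M^{-1}\mathbf{d}$ is simply $(M^{-1})_{i,0} - (M^{-1})_{i,n-1}$. Using the cyclic structure of $M^{-1} = \Circ(b_{0}, b_{n-1}, \ldots, b_{1})$, the first column is $(b_{0}, b_{n-1}, b_{n-2}, \ldots, b_{1})^{T}$ while the last column is $(b_{1}, b_{0}, b_{n-1}, \ldots, b_{2})^{T}$, and subtracting these coordinatewise yields exactly the sequence $(c_{0}, c_{1}, \ldots, c_{n-1})$ given in the statement. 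This is the only actual matrix calculation in the argument.

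Before using this, I would confirm that each $c_{j}$ is rational, which reduces to showing $b_{j} \in \Q$. This follows from Galois invariance: for $\sigma \in \Gal(\Q(\omega)/\Q)$ with $\sigma(\omega) = \omega^{a}$, we have $\sigma(\chi_{i}) = \chi_{ai}$, hence $\sigma(B_{2,i}) = B_{2, ai}$. Thus $\sigma$ permutes the summands $(\omega^{ji} B_{2,i})^{-1}$, so $b_{j}$ is fixed by all of $\Gal(\Q(\omega)/\Q)$ and therefore lies in $\Q$. In particular the denominator $L$ is a well-defined positive integer and $T \in \Z$.

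Next I would verify that the third hypothesis of Proposition \ref{X1pcongs2}, namely $\sum_{i} e_{i} = 0$ for $e_{i} = N c_{i}$, holds automatically. Writing out $\sum_{i=0}^{n-1} c_{i} = (b_{0}-b_{1}) + (b_{n-1}-b_{0}) + \sum_{i=2}^{n-1}(b_{n-i} - b_{n+1-i})$, the last sum telescopes to $b_{1} - b_{n-1}$, and the whole expression collapses to $0$. (Equivalently: $M^{-1}$ preserves the hyperplane $\sum x_{i}=0$ because $M$ does, since the all-ones vector is an eigenvector of $M$.) Thus conditions (ii) and (i) of Proposition \ref{X1pcongs2} are the only nontrivial requirements on $N$.

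Finally I would combine these into the minimality argument. Condition (i), that $N c_{i} \in \Z$ for every $i$, is equivalent to $L \mid N$, so write $N = Lk$ with $k$ a positive integer. Condition (ii) then becomes $\sum_{i=0}^{n-1} (Lk c_{i}) \alpha^{2i} = kT \equiv 0 \pmod{p}$, whose smallest positive solution is $k = p/\gcd(p,T)$. Hence the minimal $N$ is $L \cdot p/\gcd(p,T)$, as claimed. The only nontrivial step is really the row-by-row identification of $M^{-1}\mathbf{d}$ with $(c_{0}, \ldots, c_{n-1})$; everything else is an immediate consequence of the structural results already established.
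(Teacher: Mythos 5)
Your proposal is correct and follows essentially the same route as the paper: identify $(c_{0},\ldots,c_{n-1})^{T}$ with $M^{-1}\cdot(1,0,\ldots,0,-1)^{T}$ and then read off the minimal $N$ from the integrality and congruence conditions of Proposition \ref{X1pcongs2}, exactly as the paper does (though far more tersely). The only cosmetic difference is your justification that $b_{j}\in\Q$ via Galois invariance of the sum, where the paper pairs complex-conjugate terms; both are fine, and your explicit telescoping check of $\sum_{i}c_{i}=0$ and the factorisation $N=Lk$ with $kT\equiv 0\pmod{p}$ supply details the paper leaves implicit.
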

\begin{proof}
    Note that $(c_{0}, \ldots c_{n-1})^{T} = M^{-1}\cdot (1,0, \ldots, -1)^{T}$. The fact that $c_{j} \in \Q$ follows from the fact that $b_{s} \in \Q$ for all $s$ (since the eigenvalues $y_{i}$ and $y_{n-i}$ are complex conjugates; and so are $(\omega^{j})^{i}$ and $(\omega^{j})^{n-i}$). The rest follows from the previous proposition. 
\end{proof}

\subsection{The Order of $D$} The orders of arbitrary elements of $\mathcal{C}_{1}(p)$ is not as straightforward to compute. The main stumbling block is the fact that the analogous matrix to $M$ is $\Bl(\Circ(\mathbf{a}), \frac{1}{12})$, where $\mathbf{a} = ( a_{0}, \ldots, a_{n-1})^{T}$, which is singular since the sums of all rows (or columns) is the zero vector (cf Lemma \ref{columnsum}). Instead, we use Proposition \ref{diagbl} and the structure of the basis of $\mathcal{F}(p)$ to compute the order of $D= P_{0} -Q_{0}$ in $J_{1}(p)$.

We begin with the following simple lemma, which results from the construction of our basis.
\begin{lem}
Let $f(\tau) = \prod_{i=0}^{n-1}E_{i}(\tau)^{e_{i}} \prod_{j-1}^{n-1}F_{j}(\tau)^{f_{j}}$, where $e_{i}, f_{j} \in \Z$ satisfy the congruence conditions of Proposition \ref{X1pcongs}. Then
\begin{center}
    $f(\tau) = \displaystyle  \left( \prod_{i=0}^{n-2} G_{i}(\tau)^{e_{i}} \right) \left( \prod_{j=1}^{n-2} H_{j}(\tau)^{f_{j}} \right) G_{n-1}(\tau)^{C_{1}}H_{n-1}(\tau)^{C_{2}}$
\end{center}
where $C_{1}, C_{2} \in \Z$ are defined as
\begin{itemize}
    \item $C_{1} = \frac{1}{p} \left( e_{n-1} +  \displaystyle \sum_{i=0}^{n-2} \alpha^{2i+2} e_{i} \right);$
    \item $C_{2} = \frac{1}{12p} \left( f_{n-1} + C_{1} \beta p - \displaystyle \sum_{i=1}^{n-2} f_{i}( -\alpha^{2i+2} + p \beta ( \alpha^{2i+2} -1)) - \sum_{i=0}^{n-2} p e_{i}(\alpha^{2i+2} -1) \right).$
\end{itemize}
\end{lem}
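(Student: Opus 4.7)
The plan is a direct verification: substitute the definitions of $G_{i}, H_{j}, G_{n-1}, H_{n-1}$ from Section~\ref{basissec} into the right-hand side, collect the exponents of each Siegel-function building block $E_{i}$ and $F_{j}$, and check they match those on the left. This forces the stated formulas for $C_{1}$ and $C_{2}$, after which I verify these are integers using the congruences of Proposition~\ref{X1pcongs}.

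\emph{Step 1 (matching exponents).} Expanding the right-hand side gives
\[
\bigl(\textstyle\prod_{i=0}^{n-2} E_{i}^{e_{i}} E_{n-1}^{-e_{i}\alpha^{2i+2}} F_{n-1}^{e_{i}p(\alpha^{2i+2}-1)}\bigr)\bigl(\textstyle\prod_{j=1}^{n-2} F_{j}^{f_{j}} F_{n-1}^{f_{j}(-\alpha^{2j+2}+p\beta(\alpha^{2j+2}-1))}\bigr)E_{n-1}^{pC_{1}}F_{n-1}^{-\beta p C_{1}+12pC_{2}}.
\]
The exponents of $E_{i}$ for $0\le i\le n-2$ and of $F_{j}$ for $1\le j\le n-2$ are already correct. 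Matching the exponent of $E_{n-1}$ to $e_{n-1}$ gives $pC_{1}=e_{n-1}+\sum_{i=0}^{n-2}e_{i}\alpha^{2i+2}$, recovering the stated formula for $C_{1}$. Matching the exponent of $F_{n-1}$ to $f_{n-1}$ and solving for $C_{2}$ then yields the stated formula.

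\emph{Step 2 (integrality of $C_{1}$).} Since $\alpha$ has order $p-1$ modulo $p$, $\alpha^{2n}=\alpha^{p-1}\equiv 1\pmod{p}$, so
\[
\textstyle\sum_{i=0}^{n-2}e_{i}\alpha^{2i+2}=\alpha^{2}\bigl(\sum_{i=0}^{n-1}e_{i}\alpha^{2i}-e_{n-1}\alpha^{2n-2}\bigr)\equiv -e_{n-1}\alpha^{2n}\equiv -e_{n-1}\pmod{p},
\]
by the first congruence of Proposition~\ref{X1pcongs}. Hence $pC_{1}\equiv 0\pmod{p}$.

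\emph{Step 3 (integrality of $C_{2}$, the main obstacle).} I would reduce $12pC_{2}$ modulo $p$ and modulo $12$ separately, appealing to the Chinese Remainder Theorem. Modulo $p$ all the $p$-multiples vanish, leaving $f_{n-1}+\alpha^{2}\sum_{i=1}^{n-2}f_{i}\alpha^{2i}$; using the second congruence of Proposition~\ref{X1pcongs} together with $\alpha^{2n}\equiv 1\pmod p$ (and the fact that $f_{0}$ does not appear, i.e.\ is taken to be $0$ in the stated product) this reduces as in Step~2 to $f_{n-1}-f_{n-1}\equiv 0\pmod{p}$. Modulo $12$ I would use $p\equiv\beta\pmod{12}$ and the identity $\beta^{2}\equiv 1\pmod{12}$ (valid for every $\beta\in\{\pm 1,\pm 5\}$) to replace $p\beta$ by $1$ and $p$ by $\beta$ throughout; substituting $\sum_{i=0}^{n-2}e_{i}\alpha^{2i+2}\equiv \beta C_{1}-e_{n-1}\pmod{12}$ from Step~1 causes the $C_{1}$-terms to cancel, and the surviving expression collapses to $\sum_{i=0}^{n-1}f_{i}+\beta\sum_{i=0}^{n-1}e_{i}\equiv\sum_{i=0}^{n-1}(pe_{i}+f_{i})\pmod{12}$, which is zero by the third congruence of Proposition~\ref{X1pcongs}.

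The only delicate point is the mod-$12$ step: it requires carefully tracking the cancellations produced by $\beta p\equiv \beta^{2}\equiv 1\pmod{12}$, re-using the explicit formula for $C_{1}$ to convert $\sum e_{i}\alpha^{2i+2}$ modulo $12$, and then invoking the third congruence of Proposition~\ref{X1pcongs} (which is the \emph{only} place this congruence enters the argument).
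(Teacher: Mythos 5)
Your proposal is correct and is precisely the verification that the paper's (very terse) proof appeals to: the paper simply asserts that the identity follows from the definitions of the $G_{i}$, $H_{j}$ and that $C_{1},C_{2}\in\Z$ follows from the congruences of Proposition \ref{X1pcongs}, and your Steps 1--3 (exponent matching, then integrality mod $p$ via $\alpha^{2n}\equiv 1$, then integrality mod $12$ via $p\beta\equiv\beta^{2}\equiv 1$ and the third congruence) supply exactly those details. No gaps.
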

\begin{proof}
 The fact that $C_{1}, C_{2} \in \Z$ follows from the congruence conditions of Proposition \ref{X1pcongs} and the rest follows from the definitions of the $G_{i}$ and $H_{j}$.    
\end{proof}
This gives us a strategy to compute the order of a cuspidal divisor in $J_{1}(p)$.  
\begin{cor}
 For any degree $0$ divisor  $\displaystyle \sum_{i=0}^{n-1} (x_{1} P_{i} + y_{i}Q_{i})$ supported on cusps, its order in $\mathcal{C}_{1}(p)$ is the smallest positive integer $N$ such that 
 \begin{itemize}
     \item[(i)]   $\Bl(\Circ(\mathbf{a}), \frac{1}{12}) \cdot (e_0, \ldots, e_{n-1}, 0, f_{1}, \ldots, f_{n-1})^{T} = (x_{0},\ldots, x_{n-1}, y_{0}, \ldots, y_{n-1})^{T}$;
     \item[(ii)] $Ne_{i}, Nf_{j} \in \Z$ for all i,j;
     \item[(iii)] $Ne_{i}, Nf_{j}$ satisfy the congruence conditions stated in Proposition \ref{X1pcongs}.
     \end{itemize}
\end{cor}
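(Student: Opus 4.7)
The plan is to deduce this corollary as an essentially direct consequence of the preceding lemma together with Proposition \ref{X1pcongs}, Proposition \ref{ordsvansX1}, and Theorem \ref{basisx1p2}. Set $B := \Bl(\Circ(\mathbf{a}), \tfrac{1}{12})$ where $\mathbf{a} = (a_0, \ldots, a_{n-1})$. Proposition \ref{ordsvansX1} says exactly that $B$ is the matrix of the divisor map $(e_0, \ldots, e_{n-1}, f_0, \ldots, f_{n-1}) \mapsto (\ord_{P_0} f, \ldots, \ord_{P_{n-1}} f, \ord_{Q_0} f, \ldots, \ord_{Q_{n-1}} f)$ applied to $f = \prod_i E_i^{e_i} \prod_j F_j^{f_j}$. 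A direct application of Proposition \ref{diagbl} (together with Lemma \ref{columnsum} for the eigenvalue attached to the all-ones vector $\mathbf{1} \in \Q^{2n}$) shows that $\ker B$ is one-dimensional and spanned by $\mathbf{1}$: the remaining eigenvalues are $-n/6$ and the numbers $\tfrac{1}{4}B_{2,\chi}$ for even non-principal Dirichlet characters $\chi$ modulo $p$, all of which are nonzero because their product divides the cuspidal class number $h_1(p)$ (Theorem \ref{cuspidalclassnumberX1p}). Imposing the extra constraint $f_0 = 0$ therefore pins down a unique rational solution to (i) for every degree-zero divisor $D$.

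For the sufficient direction, suppose $N \in \Z_{>0}$ satisfies (i)--(iii). Then Proposition \ref{X1pcongs} guarantees that $g(\tau) := \prod_i E_i(\tau)^{Ne_i} \prod_j F_j(\tau)^{Nf_j}$ is a modular unit for $\Gamma_1(p)$, and Proposition \ref{ordsvansX1} combined with (i) computes $\divv(g) = ND$. Hence $N \cdot [D] = 0$ in $\mathcal{C}_1(p)$, so the order of $[D]$ divides $N$.

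For the necessary direction, let $M$ be the order of $[D]$, so there exists $g \in \mathcal{F}(p)$ with $\divv(g) = MD$. By Theorem \ref{basisx1p2}, $g$ equals a constant times an integer product of the basis elements $G_i, H_j$; expanding the latter in terms of $E_i, F_j$ yields $g = c \prod_i E_i^{a_i} \prod_j F_j^{b_j}$ for integers $a_i, b_j$ that automatically fulfil the congruences of Proposition \ref{X1pcongs}. To obtain a representation with $b_0 = 0$ as required by (i), I shift the exponent vector by $-b_0 \cdot \mathbf{1} \in \Z^{2n}$. Since $\mathbf{1} \in \ker B$, the divisor equation is preserved; the first two congruences of Proposition \ref{X1pcongs} are preserved because $\sum_{i=0}^{n-1} \alpha^{2i}$ is the geometric sum $(\alpha^{p-1}-1)/(\alpha^2-1) \equiv 0 \pmod{p}$; and the third is preserved because shifting all $2n$ exponents by an integer $c$ alters $\sum_i(pe_i + f_i)$ by $c(p+1)n = c(p^2-1)/2$, which is divisible by $12$ since $p^2 \equiv 1 \pmod{24}$ for $p$ coprime to $12$. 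The shifted integer vector is then $M$ times the unique rational solution from the first paragraph, so $M$ itself satisfies (i)--(iii). Combined with the sufficient direction, this identifies the minimal such $N$ with the order $M$.

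The only step requiring genuine attention is the shift argument that preserves the modular congruence $\sum_i(pe_i + f_i) \equiv 0 \pmod{12}$; the remainder is bookkeeping with results already established in the paper.
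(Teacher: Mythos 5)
Your proof is correct and follows the same overall strategy as the paper: identify $\Bl(\Circ(\mathbf{a}),\tfrac{1}{12})$ with the divisor map via Proposition \ref{ordsvansX1}, and reduce the order computation to integrality plus the congruences of Proposition \ref{X1pcongs} by appealing to Theorem \ref{basisx1p2}. The one point where you genuinely diverge is the normalization $f_{0}=0$ in the necessary direction. The paper gets this for free from the structure of its basis: $F_{0}$ occurs only in $H_{0}$, which is excluded from $\{G_{0},\ldots,G_{n-1},H_{1},\ldots,H_{n-1}\}$, so expanding any product of basis elements already has $F_{0}$-exponent zero (this is why the preceding lemma takes $f_{0}=0$ from the outset). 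You instead shift the exponent vector along $\ker B=\Q\cdot\mathbf{1}$ and verify that all three congruences survive, using $\sum_{i=0}^{n-1}\alpha^{2i}\equiv 0 \pmod{p}$ and $(p^{2}-1)/2\equiv 0\pmod{12}$; both verifications are correct, and the shift could alternatively be justified in one line by noting it amounts to multiplying $g$ by a power of the constant modular unit $\prod_{i}E_{i}F_{i}$, so the shifted exponents still belong to a modular unit and hence satisfy the congruences by the forward direction of Proposition \ref{X1pcongs}. Your route is therefore valid, merely longer and more robust than necessary. One cosmetic imprecision: Proposition \ref{diagbl} diagonalizes $\Bl(\Circa(\mathbf{a}),\tfrac{1}{12})$ rather than $\Bl(\Circ(\mathbf{a}),\tfrac{1}{12})$; the two differ by the invertible factor $\Diag(J,J)$, so the kernels agree and your conclusion $\ker B=\Q\cdot\mathbf{1}$ stands, but the eigenvalues you list are those of the alternative circulant block matrix.
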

\begin{proof}
For any such divisor $\displaystyle \sum_{i=0}^{n-1} (x_{i} P_{i} + y_{i}Q_{i})$, its order in the Jacobian  is the smallest positive integer $N$ for which there exist integers $g_{i}, h_{j}$ with 
\begin{center}
   $ \divv \left( \displaystyle \prod_{i=0}^{n-1} G_{i}(\tau )^{g_{i}} \prod_{j=1}^{n-1} H_{j}(\tau )^{h_{j}} \right) = N \cdot \displaystyle \sum_{i=0}^{n-1} (x_{i}P_{i} + y_{i}Q_{i}).$ 
\end{center}
Thus, if one finds $\mathbf{x} = (e_{0}, \ldots, e_{n-1}, 0, f_{1}, \ldots, f_{n-1})^{T} \in \Q^{2n}$ with 
\begin{center}
     $\Bl(\Circ(\mathbf{a}), \frac{1}{12}) \cdot  \mathbf{x} = (x_{0},\ldots, x_{n-1}, y_{0}, \ldots, y_{n-1})^{T}$
     \end{center}
then the order is simply the smallest positive integer $N$ such that $N \cdot \mathbf{x} \in \Z^{2n}$ and it satisfies the congruence conditions of Proposition \ref{X1pcongs}. 
\end{proof}

We apply this to the divisor $P_{0} - Q_{0}$. In what follows, we assume as before that $\omega$ is a primitive $n$th root of unity. We want to find a solution $\mathbf{x} = (e_{0}, e_{1}, \ldots, e_{n-1}, f_{0}, \ldots, f_{n-1})^{T} \in \Q^{2n}$ with $f_{0} =0$ and such that 
\begin{center}
    $\Bl(\Circ(\mathbf{a}), \frac{1}{12}) \cdot  \mathbf{x} = (1,0, \ldots, 0, -1, 0, \ldots 0)^{T}.$
\end{center}
For $1 \le i \le n-1$ define
\begin{center}
    $\beta_{i} = \displaystyle \sum_{j=0}^{n-1} (\omega^i)^{j}a_{j} = \frac{1}{4}B_{2,i}$.
\end{center}
We diagonalise $\Bl(\Circa(\mathbf{a}), \frac{1}{12})$ using Proposition \ref{diagbl}. More precisely, $\Bl(\Circa(\mathbf{a}), \frac{1}{12}) = P^{-1} DP$ where 
 \begin{center}
  $P = \begin{bmatrix}
    1  &  1 & \dots & 1 &  1 & 1  &  1 & \dots & 1 &  1\\
         1 & \omega & \dots & \omega^{n-2} & \omega^{n-1}   &   1 & \omega & \dots & \omega^{n-2} & \omega^{n-1} \\
          
           \vdots & \vdots & \ddots & \vdots & \vdots  & \vdots & \vdots & \ddots & \vdots & \vdots \\
         1 & \omega^{n-1} & \dots & (\omega^{n-1})^{n-2} & ( \omega^{n-1})^{n-1}  & 1 & \omega^{n-1} & \dots & (\omega^{n-1})^{n-2} & ( \omega^{n-1})^{n-1} \\          
          1  &  1 & \dots & 1 &  1  & -1 & 0 & \ldots & 0 & 0 \\
         1 & \omega & \dots & \omega^{n-2} & \omega^{n-1} & -1 & 0 & \ldots & 0 & 0\\
          
           \vdots & \vdots & \ddots & \vdots & \vdots   & \vdots & \vdots & \ddots & \vdots & \vdots\\
         1 & \omega^{n-1} & \dots & (\omega^{n-1})^{n-2} & ( \omega^{n-1})^{n-1} & -1 & 0 & \ldots & 0 & 0\\         \end{bmatrix} $,
\end{center} 
and $D = \Diag( 0, \beta_{1}, \ldots, \beta_{n-1}, \frac{-n}{6}, \beta_{1}, \ldots, \beta_{n-1})$. Thus $\mathbf{x}$ defines the following system of equations:
\begin{itemize}
    \item $\beta_{j} \displaystyle \sum_{i=0}^{n-1} (\omega^{j})^{i} e_{i} =1$ for $1 \le j \le n-1;$
    \item $ \frac{-n}{6} \left( \displaystyle \sum_{i=0}^{n-1} e_{i} - f_{0} \right) = 1;$
    \item  $ \beta_{j} \displaystyle \sum_{i=0}^{n-1} (\omega^{j})^{i} ( e_{i} + f_{j}) = 1 - \omega^{j}$ for $1 \le j \le n-1$.
\end{itemize}
Setting $f_{0} = 0$, we find the unique solution to the above system:
\begin{itemize}
    \item $e_{i} := \frac{1}{n} \left( \frac{-6}{n} + \displaystyle \sum_{j=1}^{n-1} \frac{1}{\beta_{j}} ( \omega^{-i} )^{j} \right) = \frac{-6}{n^{2}} + b_{i}$ for $0 \le i \le n-1$;
    \item  $f_{i} = \frac{1}{n} \left( \displaystyle \sum_{j=1}^{n-1} \frac{\omega^{j}}{\beta_{j}} ( 1 - (\omega^{j})^{n-i}) \right) = b_{n-1} - b_{i-1}$ for $1 \le i \le n-1$;.
\end{itemize}
where the $b_{i}$ are  as in the previous subsection. 
Let $L$ be the lowest common multiple of the denominators of $e_{i}$ and $f_{j}$. Define 
\begin{itemize}
    \item $D_{1} = \displaystyle \sum_{i=0}^{n-1}Le_{i}\alpha^{2i}$;
  \item $D_{2} = \displaystyle \sum_{i=1}^{n-1}Lf_{i}\alpha^{2i}$;
  \item $D_{3} = \displaystyle \sum_{i=0}^{n-1}Lpe_{i} + \sum_{i=1}^{n-1} Lf_{i}$;
        \end{itemize}
        and $n_{1} = \gcd(D_{1}, p)$, $n_{2} = \gcd(D_{2}, p)$ and $n_{3} = \gcd(D_{3}, 12)$. We have proved the following. 
\begin{prop}
The order of $P_{0} -Q_{0}$ in $J_{1}(p)$ is   
 \begin{center}
   $N = \frac{p}{\gcd(n_{1},n_{2}) } \cdot \frac{12}{\gcd(n_{3},12)} \cdot L.$  
 \end{center}
\end{prop}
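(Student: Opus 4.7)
The plan is to invoke the corollary immediately preceding the proposition, applied to the explicit rational vector $\mathbf{x} = (e_0, \ldots, e_{n-1}, 0, f_1, \ldots, f_{n-1})^T$ constructed in the paragraph above. By that corollary, the order of $[P_0 - Q_0]$ is the smallest positive integer $N$ such that $N\mathbf{x}$ has integer entries and those entries satisfy the three congruence conditions of Proposition~\ref{X1pcongs}. Since $\mathbf{x}$ is uniquely determined by the matrix equation together with the constraint $f_0 = 0$, the order is determined entirely by the arithmetic of these fixed $e_i, f_j \in \mathbb{Q}$.

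The first step is to handle integrality. By the very definition of $L$, the smallest $N$ with $N e_i, N f_j \in \mathbb{Z}$ for all $i, j$ is $L$, so $L \mid N$. Writing $N = M L$ for a positive integer $M$, the three congruences of Proposition~\ref{X1pcongs} applied to the entries $N e_i$ and $N f_j$ become, after factoring $M$ out of each sum and recognising the resulting expressions as $M D_1$, $M D_2$ and $M D_3$,
\begin{equation*}
M D_1 \equiv 0 \pmod{p}, \qquad M D_2 \equiv 0 \pmod{p}, \qquad M D_3 \equiv 0 \pmod{12}.
\end{equation*}

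The second step is to find the minimal positive $M$ satisfying these three divisibilities. Since $p$ is prime, $n_i = \gcd(D_i, p) \in \{1, p\}$ and the minimal $M$ solving the $i$-th mod-$p$ congruence is $p / n_i$, while the minimal $M$ solving the mod-$12$ congruence is $12 / \gcd(n_3, 12)$. The overall minimum is the least common multiple, and two arithmetic observations finish the computation: since $p/n_1$ and $p/n_2$ both lie in $\{1, p\}$ one checks directly that their $\mathrm{lcm}$ equals $p / \gcd(n_1, n_2)$; and since $\gcd(p, 12) = 1$ for $p \geq 5$, the final $\mathrm{lcm}$ with $12/\gcd(n_3, 12)$ simply multiplies. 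This yields $M = \tfrac{p}{\gcd(n_1, n_2)} \cdot \tfrac{12}{\gcd(n_3, 12)}$, and multiplying by $L$ gives the formula claimed for $N$.

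The only genuine work lies in the preparatory material already carried out in the paragraph before the proposition: verifying that the formulas for $e_i$ and $f_j$ really do solve the linear system, and that they are uniquely pinned down by the constraint $f_0 = 0$. The existence and uniqueness rely on the fact that the block matrix $\Bl(\Circ(\mathbf{a}), \tfrac{1}{12})$ has a one-dimensional kernel (coming from the eigenvalue $\sum a_t + n/12 = 0$, which vanishes by Lemma~\ref{columnsum}) and on the fact that the remaining eigenvalues $\beta_j = \tfrac{1}{4} B_{2, \chi_j}$ are all nonzero since they appear as factors in the cuspidal class number of Theorem~\ref{cuspidalclassnumberX1p}. Once these structural facts are in hand, the proof of the proposition itself is the short divisibility argument above.
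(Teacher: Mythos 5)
Your proposal is correct and follows the same route as the paper: the paper treats the preceding corollary together with the explicit solution $(e_{i}, f_{j})$ as the proof (it simply states ``we have proved the following''), and your write-up supplies exactly the final step it leaves implicit, namely that $L \mid N$, that the three congruences become $MD_{1} \equiv MD_{2} \equiv 0 \pmod{p}$ and $MD_{3} \equiv 0 \pmod{12}$ for $N = ML$, and that the minimal such $M$ is the stated product of the two coprime factors. Your remarks on uniqueness of the solution with $f_{0}=0$ (one-dimensional kernel from the vanishing eigenvalue, nonvanishing of the $\beta_{j}$ via the cuspidal class number) match the structural facts the paper relies on.
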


\bibliographystyle{amsplain} 
\bibliography{ref}

\end{document}